\newif\ifArXiV

\ArXiVtrue %ArXiV style
\ifArXiV
\documentclass{article}
\else
\documentclass[authoryear,preprint]{elsarticle}
\fi

\usepackage{fullpage}

\usepackage{lmodern}
\usepackage[english]{babel}

\usepackage{amsmath,amssymb,amsthm,graphicx}
\usepackage{enumitem}
\usepackage{mathtools}
\usepackage{tikz}
\usepackage{hyperref}
\usepackage{subcaption}
\usepackage{xspace}
\usepackage{multicol}
\usepackage{placeins} % for \FloatBarrier = \clearpage without \newpage
\usepackage{xfrac}
\usepackage{marvosym}

\makeatletter
\def\ps@pprintTitle{%
	\let\@oddhead\@empty
	\let\@evenhead\@empty
	\def\@oddfoot{\centerline{\thepage}}%
	\let\@evenfoot\@oddfoot}
\makeatother

\newtheorem{theorem}{Theorem}
\newtheorem{definition}[theorem]{Definition}

\newtheorem{lemma}[theorem]{Lemma}
\newtheorem{corollary}[theorem]{Corollary}
\newtheorem{proposition}[theorem]{Proposition}

\theoremstyle{remark}

\newtheorem{examplex}[theorem]{Example}

\DeclareMathOperator{\st}{s.t.}
\newcommand{\obj}{z}
\newcommand{\loc}{J}
\newcommand{\cus}{I}
\newcommand{\myx}[1]{\ensuremath{x^{(#1)}}}
\DeclareMathOperator*{\supp}{supp}

\newcommand{\mytag}[1]{(\hypertarget{#1}{{#1}})}
\newcommand{\myref}[1]{\textnormal{(\hyperlink{#1}{#1})}}

\newcommand{\BC}{B\&C\xspace}
\newcommand{\pCP}{$p$CP\xspace}
\newcommand{\pnCP}{$p$NCP\xspace}
\newcommand{\pSCP}{$p$SCP\xspace}
\newcommand{\apCP}{$\alpha$N$p$CP\xspace}
\newcommand{\paCP}{$p\alpha$CCP\xspace}

\newcommand{\tsplib}{\texttt{TSPlib}\xspace}
\newcommand{\pmed}{\texttt{pmedian}\xspace}

\ifArXiV
\usepackage[affil-it]{authblk}
\usepackage[authoryear]{natbib}
\newenvironment{frontmatter}{}{}
\newenvironment{keyword}{\small \textbf{Keywords:}}{}
\let\address\affil
\fi

\begin{document}
	
\begin{frontmatter}
	\title{Investigating mixed-integer programming approaches for the  
	\texorpdfstring{$p$}{p}-\texorpdfstring{$\alpha$}{a}-closest-center problem}
	
	\ifArXiV
	\author[1]{Elisabeth Gaar\thanks{elisabeth.gaar@uni-a.de}}
	\author[1]{Sara Joosten\thanks{sara.joosten@uni-a.de}}
	\author[2]{Markus Sinnl\thanks{markus.sinnl@jku.at}}
	\affil[1]{Institute of Mathematics, University of Augsburg, Augsburg, 
	Germany}
	\affil[2]{Institute of Business Analytics and Technology Transformation/JKU 
	Business School, Johannes Kepler University Linz, Austria}			
	%\date{\today}
	\date{}
	\maketitle
	
	\else
	
	\author[unia]{Elisabeth Gaar}
	\ead{elisabeth.gaar@uni-a.de}
	\author[unia]{Sara Joosten}
	\ead{sara.joosten@uni-a.de}
	\author[jku]{Markus Sinnl}
	\ead{markus.sinnl@jku.at}
	\address[unia]{Institute of Mathematics, University of Augsburg, Augsburg, 
	Germany}
	\address[jku]{Institute of Business Analytics and Technology 
	Transformation/JKU Business School, Johannes Kepler University Linz, 
	Austria}
			
	\fi

	\begin{abstract}
	In this work, we introduce and study the~$p$-$\alpha$-closest-center 
	problem 
	(\paCP), which 
	is a generalization of the~$p$-second-center problem, a recently emerged 
	variant of the classical 
	(discrete)~$p$-center problem. In the~\paCP, 
	we are given a discrete set of customers, a discrete set of 
	potential facility locations, distances between each customer and 
	potential facility location as well as two integers~$p$ and~$\alpha$. The 
	goal is 
	to open facilities at~$p$ of the potential facility locations, such that 
	the 
	maximum~$\alpha$-distance between each customer and the open 
	facilities is minimized. The~$\alpha$-distance of a customer is 
	defined as the sum of distances from the customer to 
	its~$\alpha$ closest open facilities.
	If~$\alpha$ is set to one, 
	the \paCP is the~$p$-center 
	problem, and for~$\alpha$ being
	two, the~$p$-second-center problem is 
	obtained, for which the only existing algorithm in literature is a variable 
	neighborhood search (VNS). 
	
	We first prove relationships between the optimal objective function values 
	for different variants of the $p$-center problem to set the \paCP in 
	context 
	with 
	existing problems. 
	We then present four mixed-integer programming (MIP) formulations for 
	the~\paCP 
	and strengthen them by adding valid and optimality-preserving
	inequalities. 
	We also conduct a polyhedral study to prove relationships between 
	the linear programming relaxations of our MIP formulations.
	Moreover, we present iterative procedures for lifting some valid 
	inequalities
	to improve initial lower bounds on the optimal objective function value of 
	the~\paCP and characterize the best lower bounds obtainable by this 
	iterative lifting approach. 
	 
	Based on our theoretical findings, we develop a branch-and-cut algorithm 
	(\BC) to solve 
	the~\paCP exactly. We improve its performance by a starting and a primal 
	heuristic, variable fixings and separating inequalities. 
	In our computational study, we investigate the effect of the various 
	ingredients of our \BC  on benchmark instances from related literature. 
	Our \BC is 
	able to prove optimality for 17 of the 40 instances from the work on the 
	VNS heuristic.
	  
	\begin{keyword}
	$p$-center problem; facility location; mixed-integer 
	programming; min-max objective
	\end{keyword}
	\end{abstract}
\end{frontmatter}

\section{Introduction}\label{sec:introduction}

The (discrete) \emph{$p$-center problem (\pCP)} is a fundamental problem in location science (see, e.g., the book-chapter \citet{calik2019p}) that was first introduced in \citet{hakimi1965}.
In the \pCP, we are given a set of customer locations~$I$, a set of 
potential facility locations~$J$, an integer~$p<|J|$ and distances~$d_{ij}\geq0$ 
between each customer location $i\in I$ and potential facility location 
$j\in J$. The goal is to open~$p$~facilities in such a way that the maximum 
distance of any customer location to its closest open facility is 
minimized\footnote{We note that in literature, the problem is also sometimes 
defined using only one set of locations that represents the customer 
locations and potential facility locations at the same time. Note, that both 
versions can be transformed into another.}.
The problem is NP-hard as proven by \citet{kariv1979algorithmic} and one of its main application areas is the placement of emergency services and relief actions in humanitarian crisis as described in \citet{jia2007modeling,calik2013double} and \citet{lu2013robust}. Motivated by the fact that in such settings, facilities may fail for various reasons, different variants of the \pCP such as 
the \emph{$\alpha$-neighbor-$p$-center problem (\apCP)} defined in \citet{krumke1995generalization} 
and the \emph{$p$-next-center problem (\pnCP)} introduced by \citet{albareda2015centers}
have emerged to deal better with the issue of failing facilities. 
Another variant motivated by this issue is the \emph{$p$-second-center problem (\pSCP)}. It was first mentioned in \cite{lopez2019grasp} and only recently studied in \citet{ristic2023solving}, where the authors
present a heuristic solution approach. At the end of their work, they also propose that a generalization of this problem should be studied, 
which they call the \emph{$p$-$\alpha$-closest-center problem}. In this paper we follow this suggestion and start by giving its definition. 

\begin{definition}[The~$p$-$\alpha$-closest-center problem]
	Let~$I$ be a set of customer locations and~$J$ be a set of potential 
	facility locations with distances~$d_{ij}\geq 0$ between customer 
	locations~$i\in I$ and potential facility locations~$j \in J$. 
	Let $n=|I|$, $m=|J|$ and let~$\alpha$ and~$p$ be two integers with~$1 \leq 
	\alpha \leq p<m$. A feasible solution to the~\emph{$p$-$\alpha$-closest-center 
	problem (\paCP)} consists of a subset~$P \subseteq J$ of open facilities 
	with~$|P|=p$. Given a feasible solution~$P$ and a customer 
	location~$i \in I$, the $\alpha$-distance~$d_\alpha(P,i)$ is defined as 
	\begin{align*}
		d_\alpha(P,i)=\min_{\substack{A\subseteq P\\|A|=\alpha}} \sum_{j \in A} d_{ij}.
	\end{align*}
	The objective function value~$f_\alpha(P)$ of a feasible solution~$P$ is defined as  
	\begin{align*}
		f_\alpha(P)=\max_{i \in I} d_\alpha(P,i).
	\end{align*}
	Using these definitions, the \paCP can be formulated as
	\begin{align*}
		\min_{\substack{P \subseteq J\\ |P|=p}} f_\alpha(P). 
	\end{align*}
\end{definition}

For the sake of brevity, we also write customers instead of customer 
locations and facilities instead of potential facility locations in the remainder 
of this paper.
Note, that for~$\alpha=1$, the \paCP is exactly the \pCP, and for~$\alpha=2$ we obtain the \pSCP. 
The motivation behind the \paCP is that in case of failing facilities, the closest 
open facility for some customer in an optimal solution of the \pCP may 
fail, and thus this customer must travel to another open facility. As the \pCP 
only considers the closest open facility for each customer in the 
objective 
function, this other open facility may be far from it. The \paCP deals 
with 
this issue by considering the distance to $\alpha$ facilities in the objective 
function by minimizing the average distance from a customer to its 
$\alpha$ 
closest open facilities. 
Therefore, it minimizes 
the average distance this
customer needs to travel if its~$\beta$~closest open facilities fail over all $\beta \in \{0, 1, \ldots, \alpha -1\}$.
As our main contribution in this paper, we investigate the \paCP both from a theoretical and a practical side.

However, we also compare the \paCP to other variants of the \pCP to set it in context. Thus, we next discuss some of them in more detail. 
The~\apCP is defined as 
\[
	\min_{\substack{P\subseteq V\\|P|=p}} \max_{i \in V\setminus P} \min_{\substack{A\subseteq P\\ |A| = \alpha}} \max_{j\in A} d_{ij}.
\]
It aims to open a set of facilities $P$ such that the maximum distance between a 
customer and its $\alpha$-closest open facility is minimized.
The idea behind it is that the~$\alpha-1$ closest facilities to a 
customer may fail. Then, the customer needs to travel to the 
$\alpha$-closest facility, which therefore should not be too far away.
We note that unlike the setting in the~\paCP, the~\apCP has a single set 
$V$ of locations as input. The set of customer locations
is then defined as the set of elements where no facility is opened, i.e., $I = 
V\setminus P$ depends on the feasible solution~$P$.

A different approach is modeled by the~\pnCP.
The idea is that if the closest open facility to a customer has failed, 
the customer may only notice this when she has arrived at the failed facility. 
In 
this case, it can be useful if the next open facility is near the facility the 
customer has just arrived. 
Therefore, the \pnCP aims to minimize the maximum sum of distances from a 
customer location to its closest open facility and from this closest open 
facility to the closest open alternative facility. It can be formally defined as
\begin{align*}
	\min_{\substack{P\subseteq V\\|P|=p}} \max_{i\in I} \left( \min_{j\in P} 
	d_{ij} + \min_{\substack{k\in P\\ j'\in \arg \min_{j\in P} d_{ij}\\k\neq 
	j'}} d_{j'k} \right).
\end{align*}
Note, that the~\pnCP does not require the~$\alpha$~parameter, but compared to the~\paCP it additionally needs distances between potential facility locations as input. 

For the remainder of this paper, we use the following notation. 
For a problem, a mixed-integer program (MIP) or a linear program (LP) $F$, we denote the optimal objective function value of $F$ by $\nu(F)$. Moreover, we denote the LP-relaxation of a MIP ($F$) by ($F$-R).
Additionally, let~$\mathcal{J}^\alpha = \{A\subseteq J: |A| = \alpha\}$ be the set of all subsets of $J$ of cardinality $\alpha$. 
Then, we define $d_{iA} = \sum_{j\in A} d_{ij}$ and $d_{iA}^{\max} = \max \{d_{ij}: j\in A\}$ for all~$i\in \cus$ and~$A\in \mathcal{J}^\alpha$.
Furthermore, let $D^\alpha=\{d_{iA}: i\in I, A \in \mathcal{J}^\alpha\}$ be the set of all possible~$\alpha$-distances.

\subsection{Contribution and outline}

This work formally defines and studies the \paCP. We embed this new problem into the context of existing \pCP variants by proving relationships between their optimal objective function values. 
We further introduce 
four different mixed-integer programming (MIP) formulations of the \paCP, which are the very first for this problem and also for the \pSCP. 
We present several valid inequalities and so-called optimality-preserving inequalities, i.e., inequalities that may cut off feasible solutions, but that do not change the optimal objective function value. We conduct a polyhedral study on all formulations and study their semi-relaxations, in which only some of the binary variables are relaxed. 

For three of our MIP formulations, we also present valid inequalities that incorporate a lower bound on the optimal objective function of the \paCP, i.e., lifted inequalities. Adding these inequalities iteratively yields a procedure to improve such lower bounds similar to the approaches in \citet{gaar2022scaleable} and \citet{gaar2023exact} for the \pCP and the \apCP, respectively.
We prove convergence of this iterative procedure and give a characterization of the best obtainable lower bound as a solution of a set cover problem variant for some of the MIP formulations. Moreover, we compare the three best obtainable lower bounds in this way with each other and prove that two of them coincide.

We develop a branch-and-cut algorithm based on our theoretical results, which is the first algorithm to solve the~\paCP and the~\pSCP exactly.  
We introduce a starting and a primal heuristic as well as variable fixing procedures and study their effects on the algorithms performance. 
Our computational tests are made on~93~different instances from the literature, 
of which we could solve~52~instances to optimality. 
We compare our results to the heuristic of \citet{ristic2023solving} and prove optimality of their solutions for~17 instances.

In the remainder of this section we provide an overview of related work. In Section~\ref{sec:pcp_variants}, we discuss the relationships between several \pCP variants, including the~\apCP, the~\pnCP and the~\paCP. 
Section~\ref{sec:MIPformulations} contains our four MIP formulations of the \paCP together with strengthening inequalities and in Section~\ref{sec:polyhedral_study}, a polyhedral study is conducted.
Section~\ref{sec:lifting_D3} contains the lifted inequalities and the iterative procedure for our third and fourth MIP formulation. Section~\ref{sec:lifting_D1} describes a similar approach for our first MIP formulation, which is much more involved for this formulation compared to that for the third and fourth formulation.
In Section~\ref{sec:implementation}, we present our branch-and-cut algorithm based on our first MIP formulation and some implementation details. The results of our computational experiments are discussed in Section~\ref{sec:computational} and Section~\ref{sec:conclusion} provides closing remarks.

\subsection{Literature review}
\label{sec:litreview}

\cite{lopez2019grasp} are the first to mention the \pSCP as a possible new \pCP variant. However, they do not investigate it. This was only recently done by \cite{ristic2023solving}. These authors propose a variable neighborhood search (VNS) to heuristically solve the \pSCP. They also mention the NP-hardness of the problem as it is a generalization of the NP-hard \pCP.
Aside from \cite{lopez2019grasp} and \cite{ristic2023solving} we are not aware of any other work considering the \pSCP. For the related problems mentioned in the introduction, there is various existing research as we discuss next.  

\paragraph{The~$p$-center problem}
The \pCP itself was already introduced by \citet{hakimi1965} and since its introduction there has been an enormous amount of work on it, both from the exact as well as from the (meta-)heuristic side. As we are concerned with the design of exact solution algorithms in our work, we focus this overview on exact methods and refer to \citet{garcia2019survey} for heuristics and approximation algorithms.

\citet{minieka1970} presents the first exact algorithm for 
the \pCP. This approach uses the connection of the \pCP to the \emph{set cover problem (SCP)}, and aims to identify a set of open facilities of minimum cardinality such that every customer demand point is within a given radius of at least one open facility. 
Consequently, the \pCP can be solved in an iterative fashion by solving SCPs. 
Many follow-up works such as 
\citet{garfinkel1977,ilhan2001,ilhan2002,caruso2003,alKhedhairi2005,chen2009} 
and \citet{contardo2019scalable} exist.

There are also various MIP formulations for solving the \pCP to proven 
optimality. The classical textbook formulation of the \pCP 
(see, e.g., \citet{daskin2013network}) uses two sets of variables, namely facility opening variables and 
assignment variables, and its linear programming (LP)-relaxation is known to have bad bounds as shown for example in \citet{snyder2011fundamentals}. \citet{elloumi2004} present an 
alternative MIP formulation, which was later simplified in \citet{ales2018}. They show that the LP-relaxation of their formulation can give better bounds compared to the relaxation of the classical formulation. In \citet{calik2013double} another MIP formulation of the  
\pCP is developed and the authors prove that its LP-relaxation gives the same bounds as the LP-relaxation of the formulation of \citet{elloumi2004}. In \citet{gaar2022scaleable}, a further MIP formulation is obtained by projecting out the assignment variables from the classical formulation, following the Benders approach of \citet{fischetti2017redesigning} for the uncapacitated facility location problem. Moreover, the authors also present an iterative lifting scheme for the inequalities in the new formulation, which, 
given a lower bound on the optimal objective function value of the \pCP, potentially improves this lower bound. 
They prove that the best lower bounds obtainable by this procedure are the same bounds that are obtained by a semi-relaxation of \citet{elloumi2004}.

\paragraph{The~$\alpha$-neighbor-$p$-center problem}
The \apCP was first introduced and studied by the approximation-algorithms community in
\citet{krumke1995generalization,chaudhuri1998p} and \citet{khuller2000fault}. In the 2020s, various heuristic works on the problem appeared, namely a greedy randomized adaptive search procedure (GRASP) proposed by \citet{sanchez2022grasp}, a local search by \citet{mousavi2023exploiting} and a parallel VNS by \citet{chagas2024parallel}. Moreover, \citet{gaar2023exact} introduce two MIP formulations of the problem, one based on the classical textbook formulation of the \pCP and one based on the formulation of 
\citet{ales2018} of the \pCP. For the first formulation, they present valid inequalities using the lower bounds from the LP-relaxation and how these inequalities can be used in an iterative lifting-scheme akin to their approach for the \pCP in \citet{gaar2022scaleable}. For the second formulation, they present an iterative variable fixing scheme based on lower bounds from the LP-relaxation.

As described above, in the \apCP, we are given one set of locations, and for a given solution the customer locations are defined as the locations where 
no facility is opened.
However, there is some literature on a variant in which all locations are customer locations. This variant is referred 
to as the 
\emph{$\alpha$-all-neighbors-$p$-center problem} in \citet{khuller2000fault} and 
the \emph{fault-tolerant-$p$-center problem} in \citet{elloumi2004}.

\paragraph{The~$p$-next-center problem}
\citet{albareda2015centers} were the first to introduce the \pnCP. They prove the NP-hardness of this problem and propose three MIP formulations, which are based on two-index, three-index and covering variables. 
Furthermore, they compare the performance of those formulations when given to an off-the-shelf MIP solver.
The first heuristics for the \pnCP were proposed by \citet{lopez2019grasp}. They develop a VNS and a GRASP and test a hybridized algorithm as well. Later, \citet{londe2021evolutionary} propose an evolutionary algorithm for the \pnCP and \citet{ristic2023auxiliary} present a different VNS heuristic that is based on the VNS of \citet{mladenovic2003solving} for the \pCP.  

\section{Relationships between \texorpdfstring{$p$}{p}CP variants}\label{sec:pcp_variants}

It becomes clear from the introduction that the described \pCP variants are all closely related. 
In this section, we study these relationships in more detail and prove some lower and upper bounds on the optimal objective function value of one \pCP variant by the one of another \pCP variant. 

We start by showing an example of optimal solutions for the~\paCP for different values of~$\alpha$.  
We consider the instance \texttt{att48} of the \tsplib instance set from 
\citet{reinelt1991tsplib} with $I=J$. The distances in this instance are defined 
as the Euclidean distances between the points in $I=J$, which are given as 
coordinates in the two-dimensional plane. 
In Figure~\ref{fig:instance}, we
depict an optimal solution of the \paCP with $p=10$ and $\alpha \in \{1,2,3\}$, i.e., for the~\pCP, the~\pSCP and the~$p$$3$CCP. 
For this instance, all obtained optimal solutions differ.
The corresponding optimal objective function values are 1203.18, 2827.72 and 4895.52, respectively.
For a smaller example showing different optimal solutions for the \pCP, the \pSCP and the \pnCP, we refer to \citet{ristic2023solving}.

\begin{figure}[h]
	\centering
	\includegraphics[scale=0.2]{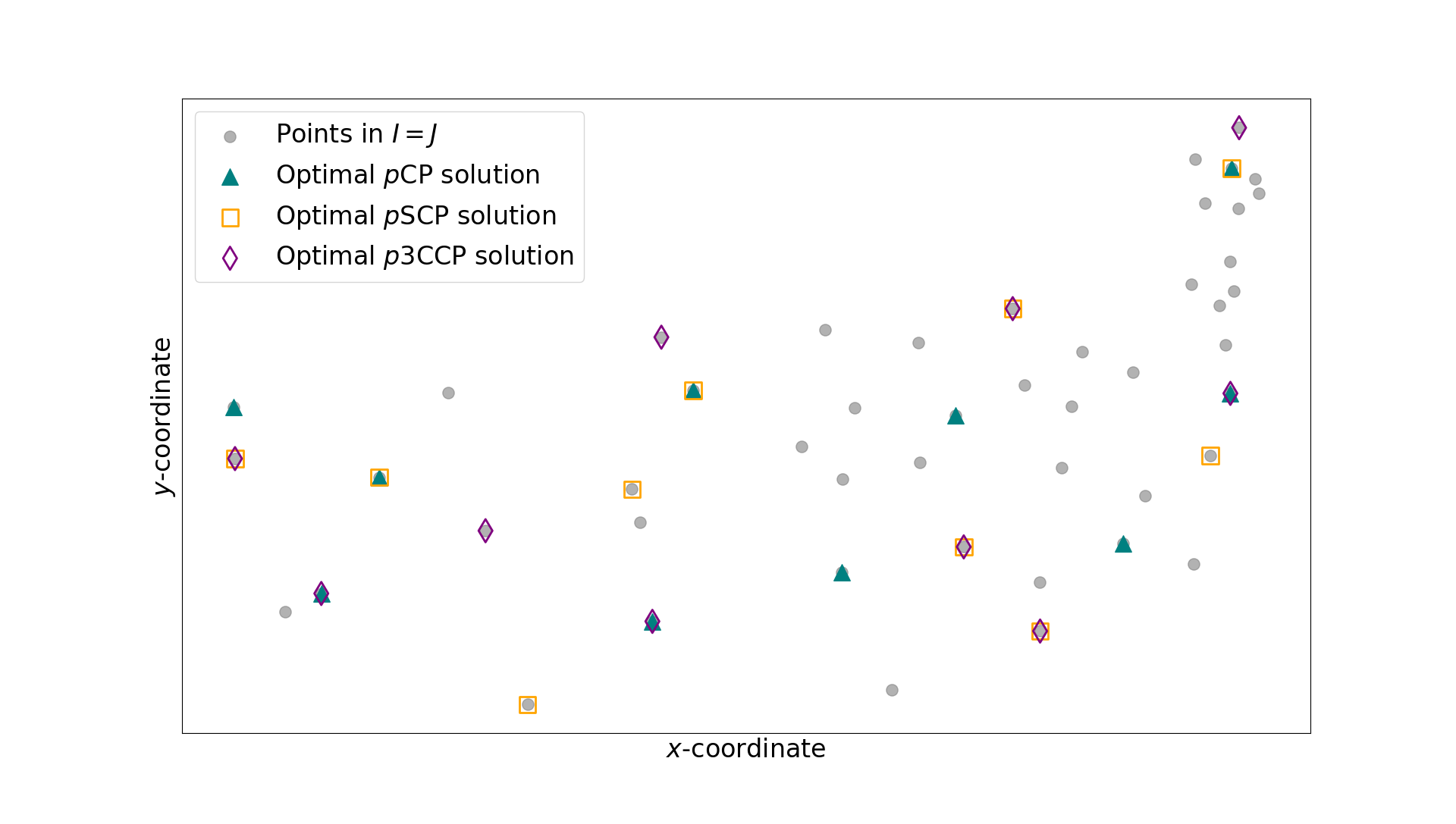}
	\caption{The locations of the \texttt{tsplib} instance 
	\texttt{att48} 
	with optimal solutions for the \pCP, the \pSCP and the $p$3CCP with $p=10$}
	\label{fig:instance}
\end{figure}

Rather than considering optimal solutions, we might also study optimal objective function values. 
The following theorem provides more detailed information about the relationships between the optimal objective function values of the \pCP, the~\apCP, the~\pnCP and the~\paCP. 
Since the \apCP is only defined for a single set representing customer 
locations and 
potential facility locations at the same time, and because for the \pnCP we need 
distances between all pairs of potential facility locations in $J$, we here assume 
$I=J$ and denote the set by $V$. 

\begin{theorem}\label{thm:pcp_relations}
	Let~$V=I=J$ be the set of customer and potential facility 
	locations, let~$p < |V|$ be an integer and let $d_{ij} \geq 0$ be the distance 
	between two locations $i,j\in V$. 
	Then, the following statements are true:  
	\begin{alignat*}{3}
		& 1. \quad & \nu(\text{\pCP}) &\leq \nu(\text{\pnCP}) && 
		\\
		& 2. & \nu(\text{\pCP}) &\leq \nu(\text{\apCP}) \qquad && \textrm{for all } \alpha \leq p \textrm{ if } d_{ii} = 0 \textrm{ for all } i\in V \\
		& 3. & \nu(\text{\apCP}) &\leq \nu(\text{\paCP})  && \textrm{for all } \alpha \leq p \\
		& 4. & \nu(\text{\apCP}) &\leq \nu(\text{$\beta$N$p$CP})  && \textrm{for all } \alpha \leq \beta \leq p \\
		& 5. & \nu(\text{\paCP}) &\leq \nu(\text{$p$$\beta$CCP})  && \textrm{for all } \alpha \leq \beta \leq p \\
		& 6. & \nu(\text{2N$p$CP}) &\leq \nu(\text{\pnCP}) && \text{if the distances satisfy the triangle} \\
		&    &                     &                       && \qquad \text{inequalities $d_{ij} \leq d_{ik} + d_{kj}$ for all $i,j,k\in V$}. 
	\end{alignat*}
\end{theorem}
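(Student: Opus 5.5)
All six statements will be proved the same way. Take an optimal solution $P^*$ of the problem on the right-hand side. Show that the objective value of $P^*$ in the left-hand problem is at most its value on the right, customer by customer. Then $\nu(\text{left}) \le f_{\text{left}}(P^*) \le f_{\text{right}}(P^*) = \nu(\text{right})$. The only subtlety is the customer set, which is $V$ for the \pCP, \pnCP and \paCP but $V\setminus P$ for the \apCP. Since $V\setminus P \subseteq V$, a maximum over $V\setminus P$ is at most the maximum over $V$. This is the direction needed in statement 3, where the \apCP is on the left.

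\textbf{Statements 1 and 5.} For statement 1, fix $i$ and let $j'$ be its closest open facility. The \pnCP term is $d_{ij'} + d_{j'k} \ge d_{ij'} = \min_{j\in P} d_{ij}$ because distances are nonnegative. For statement 5, the $\beta$ smallest values among $\{d_{ij}: j\in P\}$ contain the $\alpha$ smallest. Hence $d_\beta(P,i) \ge d_\alpha(P,i)$, and taking the maximum over $i\in V$ gives the claim.

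\textbf{Statements 3 and 4.} Order the open facilities by distance from $i$: $d_{ij_1}\le \dots \le d_{ij_p}$. The \apCP value at $i$ is $d_{ij_\alpha}$. For statement 3, $d_{ij_\alpha} \le \sum_{t\le\alpha} d_{ij_t} = d_\alpha(P,i)$ by nonnegativity, and we use the customer-set remark. For statement 4, $d_{ij_\alpha}\le d_{ij_\beta}$, and both problems use the same customer set $V\setminus P$.

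\textbf{Statement 2.} Here the customer sets differ: the \pCP counts customers $i\in P$, while the \apCP does not. For $i\in P$ we have $\min_{j\in P} d_{ij} \le d_{ii} = 0$, which is exactly where the hypothesis $d_{ii}=0$ is used. So these customers contribute $0$, and the \pCP maximum is attained on $V\setminus P$ (or is $0$). For $i\notin P$, $\min_{j\in P} d_{ij} = d_{ij_1}\le d_{ij_\alpha}$.

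\textbf{Statement 6.} This is the main obstacle, again because of the customer sets. The 2N$p$CP value at $i\in V\setminus P$ is the second-smallest distance $d_{ij_2}$. In the \pnCP, let $j'$ be the closest open facility to $i$ and $k \neq j'$ the open facility nearest to $j'$. By the triangle inequality, $d_{ik}\le d_{ij'}+d_{j'k}$. Since $k\neq j'$ is open, $k$ is at least the second-closest open facility to $i$, so $d_{ij_2}\le d_{ik}$. (If there are ties at the nearest facility, $d_{ij_2}=d_{ij_1}$ and the bound still holds.) Hence the 2N$p$CP value at $i$ is at most the \pnCP term at $i$. Only customers in $V\setminus P$ matter on the left, so the \pnCP maximum over the larger set $V$ dominates. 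This also needs $p\ge 2$ so that $k$ exists, which is implicit in the \pnCP being well defined.
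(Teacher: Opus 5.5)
Your proposal is correct and follows the paper's proof. In each item you evaluate the left-hand objective at an optimal solution $P^*$ of the right-hand problem and compare customer by customer, using nonnegativity, $d_{ii}=0$ to pass from $V$ to $V\setminus P^*$ in item 2, and the pair $\{j',k\}$ with the triangle inequality in item 6. Your handling of the differing customer sets and of ties is slightly cleaner than the paper's written chain: its item 5 uses $V\setminus P$ for the closest-center problem, and its item 6 omits the requirement $j\neq k$.
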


\begin{proof}
	\begin{enumerate}
		\item Let $P^*$ be an optimal solution of the~\pnCP. Then, 
		\begin{align*}
			\nu(\text{\pCP}) &=
			\min_{\substack{P\subseteq V\\ |P|=p}} \ \max_{i\in V} \ \min_{j\in P} \ d_{ij} 
			\leq \max_{i\in V} \ \min_{j\in P^*} \ d_{ij} 
			= \max_{i\in V} \ ( \min_{j\in P^*} \ d_{ij} + 0)  \\
			&\leq \ \max_{i\in V} \left(\min_{j\in P^*} \ d_{ij} + \min_{\substack{k\in P^*\\ j'\in \arg \min _{j\in P^*} d_{ij},\\k\neq j'}} \ d_{j'k}\right) 
			= \nu(\text{\pnCP}).
		\end{align*} 
		
		\item Let $P^*$ be an optimal solution of the~\apCP. Then, 
		\begin{align*}
			\nu(\text{\pCP}) 
			&= \min_{\substack{P\subseteq V\\ |P|=p}} \max_{i\in V} \min_{j\in P} d_{ij}
			\leq \max_{i\in V} \min_{j\in P^*} d_{ij} 
			= \max_{i\in V\setminus P^*} \min_{j\in P^*} d_{ij} \\
			&= \max_{i\in V\setminus P^*} \min_{\substack{A\subseteq P^*\\ |A|=1}} \max_{j\in A} d_{ij} 
			\leq \max_{i\in V\setminus P^*} \min_{\substack{A\subseteq P^*\\ |A|=\alpha}} \max_{j\in A} d_{ij} 
			= \nu(\text{\apCP})
		\end{align*} 
		where the second equality holds by the assumption~$d_{ii} = 0$ for all~$i\in V$.
		
		\item Let $P^*$ be an optimal solution of the~\paCP. Then, 
		\begin{align*}
			\nu(\text{\apCP})  
			&= \min_{\substack{P\subseteq V\\ |P|=p}} \max_{i\in V\setminus P} \min_{\substack{A\subseteq P\\ |A|=\alpha}} \max_{j\in A} d_{ij}
			\leq \max_{i\in V\setminus P^*} \min_{\substack{A\subseteq P^*\\ |A|=\alpha}} \max_{j\in A} d_{ij} \\
			& \leq \max_{i\in V} \min_{\substack{A\subseteq P^*\\ |A|=\alpha}} \max_{j\in A} d_{ij} 
			\leq \max_{i\in V} \min_{\substack{A\subseteq P^*\\|A|=\alpha}} \sum_{j\in A} d_{ij}
			= \nu(\text{\paCP}).
		\end{align*}
		
		\item Let $P^*$ be an optimal solution of the~$\beta$N$p$CP. If~$\alpha \leq \beta$, then 
		\begin{align*}
			\nu(\text{\apCP}) 
			&= \min_{\substack{P\subseteq V\\ |P|=p}} \max_{i\in V\setminus P} \min_{\substack{A\subseteq P\\ |A|=\alpha}} \max_{j\in A} d_{ij} 
			\leq \max_{i\in V\setminus P^*} \min_{\substack{A\subseteq P^*\\ |A|=\alpha}} \max_{j\in A} d_{ij} \\
			&\leq \max_{i\in V\setminus P^*} \min_{\substack{A\subseteq P^*\\ |A|=\beta}} \max_{j\in A} d_{ij} 
			= \nu(\text{$\beta$N$p$CP}).
		\end{align*}
		
		\item  Let $P^*$ be an optimal solution of the~$p$$\beta$CP. If~$\alpha \leq \beta$, then 
		\begin{align*}
			\nu(\text{\paCP}) 
			&= \min_{\substack{P\subseteq V\\ |P|=p}} \max_{i\in V\setminus P} \min_{\substack{A\subseteq P\\|A|=\alpha}} \sum_{j\in A} d_{ij} 
			\leq \max_{i\in V\setminus P^*} \min_{\substack{A\subseteq P^*\\ |A|=\alpha}} \sum_{j\in A} d_{ij} \\
			&\leq \max_{i\in V\setminus P^*} \min_{\substack{A\subseteq P^*\\ |A|=\beta}} \sum_{j\in A} d_{ij}
			= \nu(\text{$p$$\beta$CCP}).
		\end{align*}
		
			\item Let $P^*$ be an optimal solution of the~\text{\pnCP}. Then, 
		\begin{align*}
			\nu(\text{2N$p$CP}) 
			&= \min_{\substack{P\subseteq V\\ |P|=p}} \max_{i\in V\setminus P} \min_{\substack{A\subseteq P\\ |A|=2}} \max_{j\in A} d_{ij}
			\leq \max_{i\in V\setminus P^*} \min_{\substack{A\subseteq P^*\\ |A|=2}} \max_{j\in A} d_{ij} 
			\leq \max_{i\in V} \min_{\substack{A\subseteq P^*\\ |A|=2}} \max_{j\in A} d_{ij} \\
			&= \max_{i\in V} \min_{\substack{A\subseteq P^*\\|A|=2\\A=\{j,k\}}} \max \{d_{ij}, d_{ik}\} 
			\leq \max_{i\in V} \min_{\substack{A\subseteq P^*\\ |A|=2\\A=\{j,k\}}} (d_{ij} + d_{jk})
			= \max_{i\in V} \min_{\substack{j,k\in P^*}} (d_{ij} + d_{jk}) 
			\\
			&\leq \max_{i\in V} \left(\min_{j\in P^*} d_{ij} + \min_{\substack{k\in P^*\\ j'\in \arg \min _{j\in P^*} d_{ij},\\ k\neq j'}} d_{j'k}\right) 
			= \nu(\text{\pnCP})
		\end{align*}
		where the third inequality is true because for all $i,j,k \in V$ it holds that (i) $d_{ij} \leq d_{ij}+d_{jk}$ 
		since $d_{jk} \geq 0$ and that (ii)
		$d_{ik} \leq d_{ij}+d_{jk}$ by the triangle inequality.
	\end{enumerate}
\end{proof} 

The following examples show that no relationship between the optimal objective function values of the~\apCP with $\alpha=3$ and the~\pnCP and the~\pSCP holds in general. 

\begin{examplex}\label{ex:3npcp_small}
	Let~$I = J = \{1, 2, 3, 4\}$,~$p=\alpha=3$ and $d_{1,1} = d_{2,2} = d_{3,3}= d_{4,4} = 0, d_{1,2}=d_{2,1}=d_{1,4}=d_{4,1}=d_{2,3}=d_{3,2}=d_{3,4}=d_{4,3}=1, d_{1,3}=d_{3,1}=d_{2,4}=d_{4,2}=\sqrt{2}$.
	One can verify that~$P=\{2,3,4\}$ is an optimal solution of the \apCP with optimal objective function value~$\sqrt{2}$. The solution~$P$ is also optimal for the \pnCP and the \pSCP. However, their optimal objective function values are both $2$. So for this instance $\nu(\text{\apCP}) < \nu(\text{\pnCP})$ and $\nu(\text{\apCP}) < \nu(\text{\pSCP})$ hold for $\alpha=3$.
	
	Figure~\ref{fig:3npcp_small} illustrates Example~\ref{ex:3npcp_small}. The set~$I=J$ is depicted by round nodes in Figure~\ref{fig_small:instance}. In Figures~\ref{fig_small:3NpCP},~\ref{fig_small:pNCP}, and~\ref{fig_small:pSCP}, the square nodes correspond to open facilities in an optimal solution to the~$3$N$p$CP, the~\pnCP, and the~\pSCP, respectively.
\end{examplex}

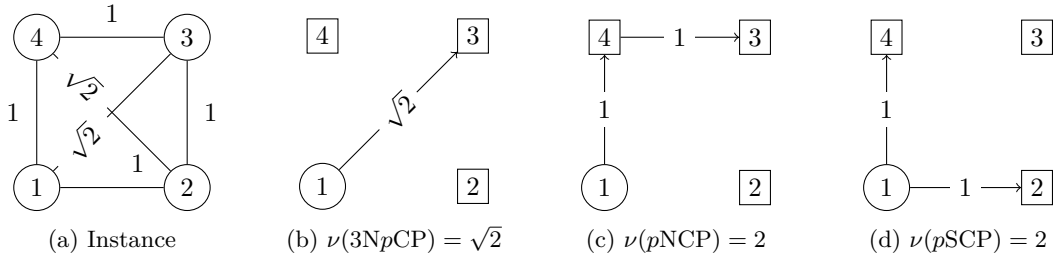
\begin{figure}[ht]
	\centering
	\begin{subfigure}{0.22\textwidth}
		\centering
		\begin{tikzpicture}[every node/.style={circle}]
			% outer square
			\node (a) at (0,0) [draw] {1};
			\node (b) at (2,0) [draw] {2};
			\node (c) at (2,2) [draw] {3};
			\node (d) at (0,2) [draw] {4};
			
			% edges with weight 1
			\draw (a) -- node[above, near end] {$1$} (b);
			\draw (b) -- node[right] {$1$} (c);
			\draw (c) -- node[above] {$1$} (d);
			\draw (d) -- node[left] {$1$} (a);
			
			% diagonals
			\draw (a) -- node[near start, sloped, fill=white] {$\sqrt{2}$} (c);
			\draw (b) -- node[near end, sloped, fill=white] {$\sqrt{2}$} (d);
		\end{tikzpicture}
		\caption{Instance}
		\label{fig_small:instance}
	\end{subfigure}
	\begin{subfigure}{0.22\textwidth}
		\centering
		\begin{tikzpicture}
			\node (a) at (0,0) [draw,circle] {1};
			\node (b) at (2,0) [draw] {2};
			\node (c) at (2,2) [draw] {3};
			\node (d) at (0,2) [draw] {4};
			
			\draw[->] (a) -- node[sloped, fill=white] {$\sqrt{2}$} (c);
		\end{tikzpicture}
		\caption{$\nu(\text{3N$p$CP}) = \sqrt{2}$}
		\label{fig_small:3NpCP}
	\end{subfigure}
	\begin{subfigure}{0.22\textwidth}
		\centering
		\begin{tikzpicture}		
			\node (a) at (0,0) [draw] {4};
			\node (b) at (2,0) [draw] {3};
			\node (c) at (0,-2) [circle, draw] {1};
			\node (d) at (2,-2) [draw] {2};
			
			\draw[->] (c) -- node[fill=white] {1} (a);
			\draw[->] (a) -- node[fill=white] {1} (b);			
		\end{tikzpicture}
		\caption{ $\nu(\text{\pnCP}) = 2$}
		\label{fig_small:pNCP}
	\end{subfigure}
	\begin{subfigure}{0.22\textwidth}
		\centering
		\begin{tikzpicture}			
			\node (a) at (0,0) [draw] {4};
			\node (b) at (2,0) [draw] {3};
			\node (c) at (0,-2) [draw, circle] {1};
			\node (d) at (2,-2) [draw] {2};
			
			\draw[->] (c) -- node[fill=white] {1} (a);
			\draw[->] (c) -- node[fill=white] {1} (d);			
		\end{tikzpicture}
		\caption{$\nu(\text{\pSCP}) = 2$}
		\label{fig_small:pSCP}
	\end{subfigure}
	\caption{Illustration of Example~\ref{ex:3npcp_small} 
	}
	\label{fig:3npcp_small}
\end{figure}

\begin{examplex}\label{ex:3npcp_big}
	Let~$I = J = \{1,2,3,4,5,6\}$, $p=4$,~$\alpha=3$ and symmetrical distances given by~$d_{1,1} = d_{2,2} = d_{3,3}= d_{4,4} = d_{5,5} = d_{6,6} = 0, d_{1,2} = d_{2,3}=d_{4,5}=d_{5,6}=1,d_{1,3}=d_{4,6}=2, d_{1,6}=d_{2,5}=d_{3,4}=4, d_{1,5}=d_{2,6}=d_{2,4}=d_{3,5}=\sqrt{17}, d_{1,4}=d_{3,6}=\sqrt{20}$.
	One can verify that~$P_1=\{1,2,4,6\}$ is an optimal solution of the \apCP with objective function value~$4$. The solution~$P_2=\{2,3,4,5\}$ is optimal for the \pnCP with an objective function value of~$2$. An optimal solution of the \pSCP is~$P_3=\{1,3,4,6\}$ with objective 
	function value $2$. Therefore, for this instance $\nu(\text{\apCP}) > \nu(\text{\pnCP})$ and $\nu(\text{\apCP}) > \nu(\text{\pSCP})$ hold for $\alpha=3$.
	
	Figure~\ref{fig:3npcp_big} illustrates Example~\ref{ex:3npcp_big}. The set~$I=J$ is again depicted by round nodes in Figure~\ref{fig_big:instance}, but for easier readability not all distances are shown. In Figures~\ref{fig_big:3NpCP},~\ref{fig_big:pNCP} and~\ref{fig_big:pSCP} the square nodes correspond again to the open facilities in an optimal solution of the~$3$N$p$CP, the~\pnCP, and the~\pSCP, respectively. 
\end{examplex}

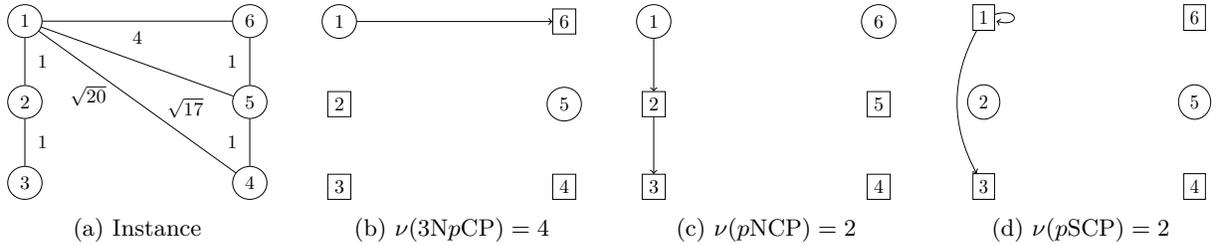
\begin{figure}[h]
	\centering
	\begin{subfigure}{0.22\textwidth}
		\centering
		\resizebox{\textwidth}{2.6cm}{
			\begin{tikzpicture}[every node/.style={circle}]
			\node (a) at (0,3) [draw] {1};
			\node (b) at (0,1.5) [draw] {2};
			\node (c) at (0,0) [draw] {3};
			\node (d) at (4,0) [draw] {4};
			\node (e) at (4,1.5) [draw] {5};
			\node (f) at (4,3) [draw] {6};
			
			\draw (a) -- node[right] {$1$} (b);
			\draw (b) -- node[right] {$1$} (c);
			\draw (d) -- node[left] {$1$} (e);
			\draw (e) -- node[left] {$1$} (f);
			
			\draw (a) -- node[below] {$4$} (f);
			\draw (a) -- node[below, near end] {$\sqrt{17}$} (e);
			\draw (a) -- node[below, near start] {$\sqrt{20}$} (d);
			
		\end{tikzpicture}
		}
		\caption{Instance}
		\label{fig_big:instance}
	\end{subfigure}
	\hspace*{0.019\textwidth}
	\begin{subfigure}{0.22\textwidth}
		\centering
		\resizebox{\textwidth}{2.6cm}{
			\begin{tikzpicture}
			\node (a) at (0,3) [draw,circle] {1};
			\node (b) at (0,1.5) [draw] {2};
			\node (c) at (0,0) [draw] {3};
			\node (d) at (4,0) [draw] {4};
			\node (e) at (4,1.5) [draw,circle] {5};
			\node (f) at (4,3) [draw] {6};
			
			\draw[->] (a) to (f);
		\end{tikzpicture}
		}
		\caption{$\nu(\text{3N$p$CP}) = 4$}
		\label{fig_big:3NpCP}
	\end{subfigure}
	\hspace*{0.019\textwidth}
	\begin{subfigure}{0.22\textwidth}
		\centering
		\resizebox{\textwidth}{2.6cm}{
		\begin{tikzpicture}		
			\node (a) at (0,3) [draw,circle] {1};
			\node (b) at (0,1.5) [draw] {2};
			\node (c) at (0,0) [draw] {3};
			\node (d) at (4,0) [draw] {4};
			\node (e) at (4,1.5) [draw] {5};
			\node (f) at (4,3) [draw,circle] {6};
			
			\draw[->] (a) to (b);
			\draw[->] (b) to (c);		
		\end{tikzpicture}
		}
		\caption{ $\nu(\text{\pnCP}) = 2$}
		\label{fig_big:pNCP}
	\end{subfigure}
	\hspace*{0.019\textwidth}
	\begin{subfigure}{0.22\textwidth}
		\centering
		\vspace{1em}
		\resizebox{\textwidth}{2.6cm}{
		\begin{tikzpicture}			
			\node (a) at (0,3) [draw] {1};
			\node (b) at (0,1.5) [draw,circle] {2};
			\node (c) at (0,0) [draw] {3};
			\node (d) at (4,0) [draw] {4};
			\node (e) at (4,1.5) [draw,circle] {5};
			\node (f) at (4,3) [draw] {6};
			
			\draw[->, bend right=30] (a) to (c);
			\path (a) edge [loop right] (a);		
		\end{tikzpicture}
		}
		\caption{$\nu(\text{\pSCP}) = 2$}
		\label{fig_big:pSCP}
	\end{subfigure}
	\caption{Illustration of Example~\ref{ex:3npcp_big} with depicting some distances 
	}
	\label{fig:3npcp_big}
\end{figure}

Examples~\ref{ex:3npcp_small} and~\ref{ex:3npcp_big} therefore show that neither $\nu(\text{3N$p$CP}) \leq \nu(\text{\pnCP})$ nor $\nu(\text{3N$p$CP}) \geq \nu(\text{\pnCP})$ and neither $\nu(\text{3N$p$CP}) \leq \nu(\text{\pSCP})$ nor $\nu(\text{3N$p$CP}) \geq \nu(\text{\pSCP})$ can hold in general. 
This observation, together with the statements in Theorem~\ref{thm:pcp_relations}, is illustrated in Figure~\ref{fig:pcp_variants}. 
There, arrows from one \pCP variant to another mean that for the same instance, the optimal objective function value of the one variant is always less or equal than the optimal objective function value of the other variant, if the requirement written on the arrow is met. Dashed lines between two variants show that there is no inequality on the optimal objective function value that holds in general.

\begin{figure}[h]
	\centering	
	\resizebox{0.8\textwidth}{6.5cm}{%
		\begin{tikzpicture}
			
			% Nodes
			\node (pCP) at (0,0) {\pCP};
			\node (2NpCP) at (0,2) {2N$p$CP};
			\node (3NpCP) at (0,4) {3N$p$CP};
			\node (aNpCP) at (0,5.5) {\apCP};
			
			\node (pNCP) at (-5.5,4) {\pnCP};
			\node (pSCP) at (5.5,4) {\pSCP};
			
			\node (paCP) at (5.5,7.5) {\paCP};
			
			\node (y0) at (-7,0) {};
			\node (y1) at (-7,8) {};
			
			% Arrows from pCP
			\draw[->] (pCP) -- (2NpCP) node[midway,right] {$d_{ii}=0$};
			\draw[->] (pCP) -- (pNCP) node[midway,sloped,above] {$d\ge 0$};
			\draw[->] (pCP) -- (pSCP) node[midway,sloped,above] {$d\ge 0$};
			
			% Arrows from 2NCP
			\draw[->] (2NpCP) -- (pNCP) node[midway,sloped,above] {$d:\Delta\text{-ineq.}$};
			\draw[->] (2NpCP) -- (3NpCP);
			\draw[->] (2NpCP) -- (pSCP) node[midway,sloped,above] {$d\ge 0$};
			
			% More arrows
			\draw[->] (3NpCP) -- (aNpCP);
			\draw[->] (aNpCP) -- (paCP) node[midway,sloped,above] {$d\ge 0$};
			\draw[->] (pSCP) -- (paCP);
			
			% Ligthning arrows
			\draw[dashed] (3NpCP) -- (pNCP) node[midway, fill=white, inner sep=1pt] {\text{\Lightning}};
			\draw[dashed] (3NpCP) -- (pSCP) node[midway, fill=white, inner sep=1pt] {\text{\Lightning}};

			\draw[->] (y0) -- (y1) node[near start, label={[align=center]left:optimal objective\\function value}] {};
		\end{tikzpicture}
	}
	\caption{Comparison of the optimal objective function values of the same instance for different variants of the~\pCP under the given assumptions}
	\label{fig:pcp_variants}
\end{figure}
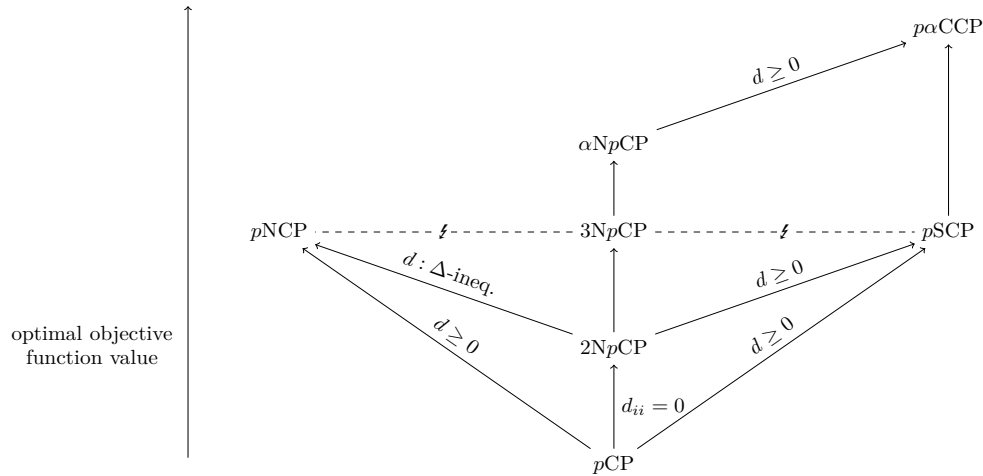

\section{Mixed-integer programming formulations of the \texorpdfstring{$p$}{p}-\texorpdfstring{$\alpha$}{a}-closest-center problem} 
\label{sec:MIPformulations}

In this section, we present several MIP formulations of the \paCP.
All of them can be viewed as generalizations of the classical textbook MIP formulation of the \pCP (see, e.g.,~\citet{daskin2013network}).   
In Section~\ref{sec:first_MIP} we present a formulation based on the same variables as the classical \pCP formulation 
as well as several optimality-preserving (in)equalities. A  
second MIP formulation based on slightly different assignment variables is considered in Section~\ref{sec:second_MIP}. 
In Section~\ref{sec:third_MIP}, we introduce two MIP formulations with assignment variables indexed by subsets of the set of potential facility locations together with valid and optimality-preserving inequalities.

\subsection{Single assignment formulation} \label{sec:first_MIP}
Our first MIP formulation of the \paCP  
uses the same binary decision variables as 
the classical textbook formulation for the \pCP. 
Therefore, let~$y_j = 1$ indicate that a facility~$j\in J$ is open and let~$y_j = 
0$ indicate that it is not. Additionally, for every customer~$i\in I$, we let 
$x_{ij}=1$ indicate that facility $j\in J$ is one of the~$\alpha$ closest open 
facilities to~$i$ and we let~$x_{ij} = 0$ mean the contrary.
Then, our first MIP formulation of the \paCP reads
\begin{subequations} \label{eq:PACA}
	\begin{alignat}{3}
		&\mathrm{\mytag{F1}} \qquad
		&\min \quad \obj \phantom{iiii} \label{pac1:z}  \\ 
		&&\st~ \sum_{j \in \loc}y_{j} & =     p   \label{pac1:sumy} \\
		&&\sum_{j \in \loc}x_{ij} & = \alpha && \forall i \in \cus  \label{pac1:sumx} \\
		&& x_{ij} & \leq  y_j  && \forall i\in \cus, \forall j\in \loc                   \label{pac1:xy} \\
		&& \sum_{j \in \loc}d_{ij}x_{ij} & \leq  z  && \forall i \in \cus                         \label{pac1:sumdx} \\
		&& x_{ij} & \in  \{0, 1\}\qquad && \forall i\in \cus, \forall j\in \loc                    \label{pac1:xbin}\\
		&& y_{j} & \in   \{0, 1\} && \forall j \in \loc                          \label{pac1:ybin}  \\
		&& z  & \in \mathbb{R}. \label{pac1:zbin}
	\end{alignat}
\end{subequations}

Constraints \eqref{pac1:sumy} and \eqref{pac1:ybin} ensure that we open 
exactly~$p$ facilities. 
Due to constraints \eqref{pac1:sumx}, each customer gets assigned 
to~$\alpha$ facilities and by constraints \eqref{pac1:xy}, such an 
assignment is only possible for open facilities.  
Moreover, because of constraints \eqref{pac1:sumdx}, the~$z$-variable must 
take at least the value of the maximum~$\alpha$-distance, i.e., the maximum 
sum of the  distances from a customer to its~$\alpha$~assigned open 
facilities, over all customers.
Since the objective function \eqref{pac1:z} is minimizing the~$z$-variable, 
any optimal solution of \myref{F1} is a feasible solution minimizing the 
maximum~$\alpha$-distance over all customers and therefore is an optimal 
solution of the \paCP.
Note, that \myref{F1} has the same number of variables and constraints as the 
classical \pCP~MIP formulation, so it has~$O(nm)$ variables, of which all but 
one are binary, and~$O(nm)$ constraints.

Next, we derive a set of \emph{optimality-preserving} inequalities, i.e., inequalities which may cut off some feasible solutions but that are satisfied by at least one optimal solution, for \myref{F1}. 
The idea is that given an upper bound $UB$ on~$\nu(\text{\paCP})$, any assignment 
from a customer~$i\in I$ to a facility~$j\in J$ with a corresponding 
distance $d_{ij} > UB$ can not be part of an optimal solution. 
This can be further generalized to sums of distances $d_{iA}$ that exceed $UB$ for 
some $i\in I$ and $A \in \mathcal{J}^\alpha$. In that case, the customer~$i$ 
can not be assigned to all of the facilities in $A$ 
in an optimal solution. 
The following theorem formalizes this idea.

\begin{theorem}\label{theo:pac1-var-fixing}
	Let~$UB$ be an upper bound on~$\nu(\text{\paCP})$ and let~$i~\in~I$, $C~\subseteq~J$ and~$\beta \in \mathbb{N}$ with $\beta \leq \alpha$. 
	If for all~$B\subseteq C$ with~$|B|=\beta$ we have that~$d_{iA} > UB$ holds for all~$A\in \mathcal{J}^\alpha$ with~$B\subseteq A$, then 
	\begin{equation}\label{pac1:optpres_all}
		\sum_{j\in C} x_{ij} \leq \beta -1	
	\end{equation}
	is an optimality-preserving inequality for~\myref{F1}, which we denote by \emph{general upper bound inequality}.
\end{theorem}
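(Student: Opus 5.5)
We need to show that at least one optimal solution of \myref{F1} satisfies \eqref{pac1:optpres_all}. The natural route is to take an arbitrary optimal solution of \myref{F1}, normalize its assignment so that each customer is assigned exactly to an $\alpha$-subset of open facilities, and then argue by contradiction that customer $i$ cannot be assigned to $\beta$ facilities of $C$.

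First fix an optimal solution $(x^*,y^*,z^*)$ of \myref{F1}. Then $z^*=\nu(\text{\paCP})\le UB$; this holds because the discussion after \myref{F1} shows that its optimal value is $\nu(\text{\paCP})$. For customer $i$, let $A^*=\{j\in J: x^*_{ij}=1\}$. By \eqref{pac1:sumx} and \eqref{pac1:xbin} we have $|A^*|=\alpha$, so $A^*\in\mathcal{J}^\alpha$. By \eqref{pac1:sumdx},
\[
d_{iA^*}=\sum_{j\in J} d_{ij}x^*_{ij}\le z^*\le UB.
\]
Thus the assignment of $i$ is exactly an $\alpha$-set $A^*$ with $d_{iA^*}\le UB$. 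No modification of the solution is needed, since every optimal solution already has this property. In fact the inequality holds for every feasible solution with $z\le UB$, which is slightly stronger.

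Now suppose \eqref{pac1:optpres_all} is violated, i.e.\ $\sum_{j\in C}x^*_{ij}=|A^*\cap C|\ge\beta$. Then we can choose $B\subseteq A^*\cap C$ with $|B|=\beta$. Such a $B$ exists when $\beta\ge 1$. If $\beta=0$, take $B=\emptyset$; the hypothesis then says $d_{iA}>UB$ for every $A\in\mathcal{J}^\alpha$, which makes the problem infeasible below $UB$, so that case is consistent too. Since $B\subseteq C$, $|B|=\beta$, and $B\subseteq A^*\in\mathcal{J}^\alpha$, the hypothesis gives $d_{iA^*}>UB$, contradicting $d_{iA^*}\le UB$. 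Hence every optimal solution of \myref{F1} satisfies \eqref{pac1:optpres_all}, so the inequality is optimality-preserving.

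The only delicate points are two. First, we need $z^*\le UB$, which uses that $UB$ bounds $\nu(\text{\paCP})=\nu(\text{\myref{F1}})$ from above. Second, the edge case $\beta=0$ must be handled. Apart from these, the argument is a direct contradiction.
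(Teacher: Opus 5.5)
Your proof is correct and takes essentially the same route as the paper. Fix an optimal solution, let $A^*$ be the $\alpha$-set to which customer $i$ is assigned, and note that $d_{iA^*}\le z^*\le UB$ by \eqref{pac1:sumdx}. Then a violated inequality gives a $\beta$-subset $B\subseteq A^*\cap C$, and the hypothesis forces $d_{iA^*}>UB$, a contradiction. Your separate treatment of $\beta=0$ is harmless but unnecessary, since the same argument with $B=\emptyset$ goes through unchanged.
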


\begin{proof}
	Assume that for all $B\subseteq C$ with $|B|=\beta$ we have that $d_{iA} > UB$ holds for all $A\in \mathcal{J}^\alpha$ with $B\subseteq A$. 
	Furthermore, let~$(\bar{x},\bar{y},\bar{z})$ be an optimal solution of~\myref{F1} and let~$A_i=\{j\in J: \bar{x}_{ij}=1\}$, so clearly $A_i\in \mathcal{J}^\alpha$ holds.  Now assume that~\eqref{pac1:optpres_all} does not hold. 
	Then,~$\bar{x}_{ij} = 1$ holds for at least~$\beta$~many facilities~$j\in 
	C$, i.e., there is some~$B\subseteq C$ with~$|B|=\beta$ and~$B\subseteq A_i$. 
	As this implies $d_{iA_i} > UB$, we have $\bar{z} \geq \sum_{j\in J} d_{ij} 
	\bar{x}_{ij} = \sum_{j\in A_i} d_{ij} = d_{iA_i} > UB$. Thus, the objective 
	function value of $(\bar{x}, \bar{y}, \bar{z})$ is greater than the upper 
	bound~$UB$. As this contradicts the optimality of~$(\bar{x}, 
	\bar{y},\bar{z})$, we have shown that~\eqref{pac1:optpres_all} must hold. 
\end{proof}

Note, that the general upper bound inequalities~\eqref{pac1:optpres_all} 
are 
in fact satisfied for all optimal solutions of~\myref{F1}, i.e., adding all of 
those inequalities does not change the optimal objective function value 
of~\myref{F1}.  
Next, we derive some special cases of Theorem~\ref{theo:pac1-var-fixing} that are used in the implementation of our solution algorithm for the~\paCP that we present in Section~\ref{sec:implementation}. 

\begin{corollary}\label{cor:var_fix_d1_2}
	Let~$UB$ be an upper bound on~$\nu(\text{\paCP})$.
	Then, 
	\begin{equation}\label{pac1:optpres2}
		x_{ij} = 0 \qquad \forall i\in I, \forall j\in J: (d_{iA} > UB \ 
		\forall A\in \mathcal{J}^\alpha: j\in A) 
	\end{equation}
	are optimality-preserving equalities for~\myref{F1}, which we denote 
	by
	\emph{upper bound equalities}.
\end{corollary}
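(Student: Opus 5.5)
This follows from Theorem~\ref{theo:pac1-var-fixing} by choosing the parameters appropriately. Fix $i\in I$ and $j\in J$ such that $d_{iA} > UB$ for all $A\in\mathcal{J}^\alpha$ with $j\in A$. We apply Theorem~\ref{theo:pac1-var-fixing} with this $i$, with $C=\{j\}$ and with $\beta=1$. Since $\alpha\geq 1$, the requirement $\beta\leq\alpha$ holds.

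Next we verify the hypothesis of Theorem~\ref{theo:pac1-var-fixing}. The only subset $B\subseteq C$ with $|B|=1$ is $B=\{j\}$. The condition then reads: $d_{iA}>UB$ for all $A\in\mathcal{J}^\alpha$ with $\{j\}\subseteq A$, i.e.\ with $j\in A$. This is exactly the assumption in \eqref{pac1:optpres2}.

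The theorem therefore yields the general upper bound inequality $x_{ij}\leq \beta-1=0$ as an optimality-preserving inequality for~\myref{F1}. Together with $x_{ij}\geq 0$ from~\eqref{pac1:xbin}, this gives $x_{ij}=0$.

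The one point needing care is adding all these equalities simultaneously, for every qualifying pair $(i,j)$. Optimality preservation only requires that at least one optimal solution survives all of them. As noted after the proof of Theorem~\ref{theo:pac1-var-fixing}, its argument shows that every optimal solution satisfies each general upper bound inequality. Hence every optimal solution of~\myref{F1} satisfies all the equalities~\eqref{pac1:optpres2} at once, and adding them jointly preserves optimality. I expect no real obstacle beyond this observation.
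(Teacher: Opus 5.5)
Your proof is correct and is the paper's argument: instantiate Theorem~\ref{theo:pac1-var-fixing} with $C=\{j\}$ and $\beta=1$, then combine the resulting $x_{ij}\leq 0$ with $x_{ij}\in\{0,1\}$. Your remark that all these equalities can be imposed at once, because every optimal solution satisfies each general upper bound inequality, is valid; the paper states this only in the note after the theorem.
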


\begin{proof}
	As $UB$, $i$, $C=\{j\}$ and $\beta = 1$ satisfy the condition of Theorem~\ref{theo:pac1-var-fixing} for all~$i\in I$ and~$j\in J$ with~$d_{iA} > UB$ for all~$A\in \mathcal{J}^\alpha$ with~$j\in A$, inequalities~\eqref{pac1:optpres2} follow directly from~\eqref{pac1:optpres_all} and~\eqref{pac1:xbin}.
\end{proof}

\begin{corollary}\label{cor:var_fix_d1}
	Let~$UB$ be an upper bound on~$\nu(\text{\paCP})$. Then, 
	\begin{align*}
		x_{ij} = 0 \qquad \forall i\in I, \forall j\in J: d_{ij} > UB 
	\end{align*}
	are optimality-preserving equalities for~\myref{F1}.
\end{corollary}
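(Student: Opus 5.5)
The plan is to derive Corollary~\ref{cor:var_fix_d1} directly from Corollary~\ref{cor:var_fix_d1_2}. It suffices to show that every pair $(i,j)$ with $d_{ij} > UB$ also satisfies the hypothesis of~\eqref{pac1:optpres2}, namely that $d_{iA} > UB$ holds for every $A\in\mathcal{J}^\alpha$ with $j\in A$. Then the equality $x_{ij}=0$ for such a pair is one of the upper bound equalities, and hence it is optimality-preserving.

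The key step is a one-line estimate that uses nonnegativity of the distances. Fix $i\in I$, $j\in J$ with $d_{ij} > UB$, and let $A\in\mathcal{J}^\alpha$ with $j\in A$. Since $d_{ik}\geq 0$ for all $k\in J$, we get
\begin{align*}
	d_{iA} = \sum_{k\in A} d_{ik} = d_{ij} + \sum_{k\in A\setminus\{j\}} d_{ik} \geq d_{ij} > UB.
\end{align*}
Therefore the condition in~\eqref{pac1:optpres2} is met, and Corollary~\ref{cor:var_fix_d1_2} yields the claim.

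Alternatively, one could invoke Theorem~\ref{theo:pac1-var-fixing} with $C=\{j\}$ and $\beta=1$ and then use~\eqref{pac1:xbin}; the verification is the same estimate. There is no real obstacle here. The only point to be careful about is that the argument relies on $d_{ik}\geq 0$, which is part of the problem definition. Under this assumption the equalities are in fact a special case of~\eqref{pac1:optpres2}, so the set of pairs they fix is contained in the set fixed by the upper bound equalities.
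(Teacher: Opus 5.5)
Your proposal is correct and matches the paper, which also obtains this corollary as a special case of Corollary~\ref{cor:var_fix_d1_2}. Your explicit estimate $d_{iA}\geq d_{ij}>UB$ for every $A\in\mathcal{J}^\alpha$ with $j\in A$, using $d_{ik}\geq 0$, is exactly the check the paper leaves implicit.
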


We note that Corollary~\ref{cor:var_fix_d1} is a special case of Corollary~\ref{cor:var_fix_d1_2}, and thus follows directly from the latter.

\begin{corollary}\label{cor:var_fix_d1_3}
	Let~$UB$ be an upper bound on~$\nu(\text{\paCP})$. 
	For a customer~$i\in I$ and some $\beta \in \{1, \ldots, \alpha\}$ define 
	$C_i^\beta = \left\{j\in J: d_{ij} > \frac{UB}{\beta}\right\}$. 
	Then,
	\begin{align}
		\sum_{j \in C_i^\beta} x_{ij} \leq \beta-1 \qquad \forall i\in I, 
		\forall \beta\in \{1, \ldots, \alpha\} \label{pac1:optpres3} 
	\end{align}
	are optimality-preserving inequalities for~\myref{F1}, which we 
	denote 
	by \emph{simple upper bound inequalities}.
\end{corollary}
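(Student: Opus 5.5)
The plan is to derive Corollary~\ref{cor:var_fix_d1_3} directly from Theorem~\ref{theo:pac1-var-fixing}, applied to each $i\in I$ and $\beta\in\{1,\ldots,\alpha\}$ with the choice $C = C_i^\beta$. Once we show that this choice satisfies the hypothesis of the theorem, inequality~\eqref{pac1:optpres_all} becomes exactly~\eqref{pac1:optpres3}.

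\textbf{Checking the hypothesis.} Let $B\subseteq C_i^\beta$ with $|B|=\beta$, and let $A\in\mathcal{J}^\alpha$ with $B\subseteq A$. Every $j\in B$ satisfies $d_{ij} > UB/\beta$. Since all distances are nonnegative, we can drop the terms indexed by $A\setminus B$:
\begin{align*}
	d_{iA} = \sum_{j\in A} d_{ij} \geq \sum_{j\in B} d_{ij} > \beta\cdot\frac{UB}{\beta} = UB.
\end{align*}
So $d_{iA}>UB$ for every such $A$, which is precisely the condition of Theorem~\ref{theo:pac1-var-fixing}.

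\textbf{Edge cases.} If $|C_i^\beta| < \beta$, there is no $B$ of size $\beta$, so the hypothesis holds vacuously. The inequality is then implied by the binary constraints anyway, since $\sum_{j\in C_i^\beta} x_{ij}\le |C_i^\beta|\le\beta-1$. The strict inequality step relies on $B$ being nonempty, which holds because $\beta\geq 1$. Dividing by $\beta$ is harmless since $\beta\geq 1$. If $UB<0$, the problem would be infeasible given $d\ge 0$, but the argument is unaffected in any case.

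\textbf{Combining the inequalities.} Theorem~\ref{theo:pac1-var-fixing} only certifies each inequality individually. Its proof, however, shows that every optimal solution of~\myref{F1} satisfies~\eqref{pac1:optpres_all}, as the remark after the theorem notes. Hence all inequalities~\eqref{pac1:optpres3}, over all $i$ and $\beta$, hold simultaneously for every optimal solution and can be added together while remaining optimality-preserving.

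The only mildly delicate point is the strict inequality in the displayed estimate. It needs $B\neq\emptyset$ and the nonnegativity of the distances, both of which are guaranteed.
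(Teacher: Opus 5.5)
Your proposal is correct and follows the paper's proof exactly. The paper also obtains the corollary by applying Theorem~\ref{theo:pac1-var-fixing} with $C=C_i^\beta$ and only states that the hypothesis is ``easy to see''; your estimate $d_{iA}\ge\sum_{j\in B}d_{ij}>\beta\cdot\frac{UB}{\beta}=UB$ spells out that check. Your remarks on the vacuous case $|C_i^\beta|<\beta$, and on the inequalities holding simultaneously for every optimal solution, are accurate and add precision the paper leaves implicit.
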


\begin{proof}
	It is easy to see that~$UB$ and $C=C_i^\beta$ satisfy the condition of Theorem~\ref{theo:pac1-var-fixing} for all $i\in I$ and $\beta \leq \alpha$. Inequalities~\eqref{pac1:optpres3} then follow from~\eqref{pac1:optpres_all}.
\end{proof}

We close this section by proposing additional optimality-preserving equalities for which we do not need to know an upper bound on $\nu(\text{\paCP})$. 

\begin{lemma}\label{lem:var_fix_d1_4}
	For a customer~$i\in I$, let $J_i^{k}$ be a set of~$k$~facilities 
	that have the highest distances to $i$, i.e., $J_i^{k} \in \arg 
	\max_{\substack{Q\subseteq J\\|Q|=k}} \{\sum_{j\in Q} d_{ij}\}$. Then 
	\begin{align}
		x_{ij} = 0 \qquad \forall i\in I, \forall j\in  J_i^{p-\alpha} 
		\label{pac1:optpres_farthest} 
	\end{align}
	are optimality-preserving equalities for~\myref{F1}, which we 
		denote 
		by \emph{remoteness equalities}.
\end{lemma}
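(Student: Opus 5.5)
We prove the statement by an exchange argument: starting from an arbitrary optimal solution of \myref{F1}, we construct another optimal solution that satisfies all remoteness equalities~\eqref{pac1:optpres_farthest} simultaneously. This is enough, since optimality-preserving only requires that at least one optimal solution satisfies them. Unlike Theorem~\ref{theo:pac1-var-fixing}, no upper bound is available, so we cannot argue that every optimal solution satisfies the equalities. We have to modify the solution.

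Let $(\bar x,\bar y,\bar z)$ be optimal and $P=\{j: \bar y_j=1\}$, so $|P|=p$. For each customer $i$, let $A_i'\subseteq P$ be a set of $\alpha$ open facilities attaining $d_\alpha(P,i)$, i.e.\ $\alpha$ closest open facilities of $i$. Define $x'_{ij}=1$ if $j\in A_i'$ and $0$ otherwise, keep $y'=\bar y$, and set $z'=\max_i d_{iA_i'}$. Constraints \eqref{pac1:sumy}--\eqref{pac1:ybin} hold by construction. Moreover $d_{iA_i'}\le \sum_j d_{ij}\bar x_{ij}\le \bar z$, because $\{j:\bar x_{ij}=1\}$ is also an $\alpha$-subset of $P$. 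Hence $z'\le \bar z$, and the new solution is optimal.

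The key step, and the only real subtlety, is to choose $A_i'$ disjoint from the fixed set $J_i^{p-\alpha}$. Consider the open facilities outside $J_i^{p-\alpha}$. Since $|P\cap J_i^{p-\alpha}|\le p-\alpha$, the set $P\setminus J_i^{p-\alpha}$ contains at least $\alpha$ facilities. Because $J_i^{p-\alpha}$ maximizes the sum of $p-\alpha$ distances, every $j\notin J_i^{p-\alpha}$ satisfies $d_{ij}\le d_{ik}$ for all $k\in J_i^{p-\alpha}$. Otherwise swapping $k$ for $j$ would strictly increase the sum, contradicting the argmax.

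Now choose $A_i'$ as the $\alpha$ facilities of $P\setminus J_i^{p-\alpha}$ with smallest distance to $i$. Every element of $P\setminus A_i'$ is either in $P\setminus J_i^{p-\alpha}$ with distance at least that of each element of $A_i'$ (by the choice of $A_i'$), or in $J_i^{p-\alpha}$ with distance at least every outside distance (by the exchange fact). So $A_i'$ consists of $\alpha$ closest open facilities, and $d_{iA_i'}=d_\alpha(P,i)$.

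Ties in the distances are handled because the exchange argument only gives non-strict inequalities, which is all the construction needs. Since the choice of $A_i'$ is made independently for each customer $i$, all equalities~\eqref{pac1:optpres_farthest} hold simultaneously in the constructed optimal solution, which completes the proof.
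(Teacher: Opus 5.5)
Your proof is correct and uses the same idea as the paper's one-line argument. At most $p-\alpha$ open facilities lie in $J_i^{p-\alpha}$, so each customer can be assigned to $\alpha$ closest open facilities outside that set without increasing the objective. You also make explicit what the paper leaves implicit: the exchange argument showing facilities outside $J_i^{p-\alpha}$ are no farther than those inside, tie handling for a fixed choice of $J_i^{p-\alpha}$, and the fact that one optimal solution satisfies the equalities for all customers simultaneously.
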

\begin{proof}
	This follows from the fact that there is an optimal solution in which customer~$i$ is 
	not assigned to any facility in $J_i^{p-\alpha}$, because even if the~$p$ 
	facilities that are open are the $p$ furthest away from $i$, this~$i$ can be assigned 
	to the $\alpha$ closest of them.
\end{proof}

Note, that in case of ties in the distances,~a remoteness 
equality~\eqref{pac1:optpres_farthest} is only allowed to be added for at most 
one of all possible sets $J_i^{p-\alpha}$ to prevent the loss of optimal 
solutions.

\subsection{Separate assignment formulation} \label{sec:second_MIP}

In~\myref{F1}, the information of  
which assigned open facility is the~$\beta$-closest to a customer for 
each~$\beta\leq \alpha$, is not directly modeled in a solution. 
Therefore, we introduce a second MIP formulation
of the \paCP  
that also uses the distance measure~$z$ and the $y$-variables indicating 
which facilities are open, 
but~$\alpha$ different variables~$\myx{1}, \ldots, \myx{\alpha}$ modeling the 
assignment of the customers to their~$\alpha$ closest open facilities 
separately.  
To that end, let~$\myx{\beta}_{ij} = 1$ if~$j$ is the~$\beta$-closest open 
facility to a customer~$i$ and~$\myx{\beta}_{ij} = 0$ otherwise for all~$i\in 
I$,~$j\in J$ and~$\beta\in\{1, \ldots, \alpha\}$. 
Then, the \paCP can be modeled as
\begin{subequations}
	\begin{alignat}{3}
		&\mathrm{\mytag{F2}} \qquad
		& \min \quad \obj \phantom{iiii} \label{pac2:z}  \\ 
		&& \st~  \sum_{j \in \loc} y_j &= p \label{pac2:sumy} \\       
		&& \sum_{j \in \loc} \myx{\beta}_{ij} & =  1 && \forall i \in \cus, \forall \beta\in \{1,\ldots, \alpha\}\label{pac2:sumx} \\
		&& \sum_{\beta = 1}^\alpha \myx{\beta}_{ij} &\leq y_j && \forall i \in \cus, \forall j \in \loc\label{pac2:xy}\\   
		&& \sum_{j\in J} d_{ij} \myx{\beta}_{ij} &\leq \sum_{j\in J} d_{ij} \myx{\beta+1}_{ij} \qquad && \forall i\in \cus, \forall \beta \in\{1, \ldots, \alpha -1\}   \label{pac2:comparex}\\
		&& \sum_{j \in \loc} d_{ij} \left(\sum_{\beta =1}^\alpha \myx{\beta}_{ij} \right) & \leq \obj && \forall i \in \cus 
		\label{pac2:sumdx}\\
		&& \myx{\beta}_{ij} &\in  \{0,1\} \qquad&& \forall i \in \cus, \forall j \in \loc, \forall \beta\in \{1,\ldots, \alpha\}  
		\label{pac2:xbin}\\
		&& y_{j} &\in  \{0,1\} && \forall j \in \loc  \label{pac2:ybin}\\
		&& \obj & \in \mathbb{R}.
	\end{alignat}
\end{subequations}

As in~\myref{F1}, constraint~\eqref{pac2:sumy} enforces exactly~$p$ open 
facilities. Each customer is assigned to exactly one~$\beta$-closest facility 
for all~$\beta\in \{1,\ldots, \alpha\}$ by~\eqref{pac2:sumx}, and~\eqref{pac2:xy} 
make sure that those facilities are open and different from each other.
The correct assignment with respect to the distance of a customer~$i\in 
\cus$, e.g., that $j\in J$ is the closest open facility to $i$ is indicated 
by~$\myx{1}_{ij}$, is ensured by~\eqref{pac2:comparex}. 
Constraints~\eqref{pac2:sumdx} again force~$z$ to be the maximum~$\alpha$-distance 
over all customers, which is minimized by~\eqref{pac2:z}. 
Constraints~\eqref{pac2:xbin} and~\eqref{pac2:ybin} make sure that the variables~$x$ and~$y$ are binary. 

Formulation~\myref{F2} has~$O(\alpha nm)$ variables, of which all but one are binary, and~$O(nm)$ constraints. For a  fixed value of~$\alpha$, the sizes of~\myref{F1} and~\myref{F2} are therefore of the same order.  
Note, that it is straightforward to adapt the optimality-preserving inequalities discussed in Section~\ref{sec:first_MIP} to~\myref{F2}. For the sake of brevity, we do not give the details here. 

\subsection{Subset assignment formulation} \label{sec:third_MIP}

Our third MIP formulation of the \paCP is based on subsets. 
We use assignment variables~$x_{iA}$  for all~$i\in I$ and~$A\in 
\mathcal{J}^\alpha$ where~$x_{iA}=1$ if~$A$ is the subset of $J$ containing 
the~$\alpha$ closest open facilities to a customer~$i$ and~$x_{iA}=0$ 
otherwise.
The \paCP can then be formulated as
\begin{subequations}
	\begin{alignat}{3}
		&\mathrm{\mytag{F3}} \qquad
		& \min \quad \obj \phantom{iiii} \label{pac3:z}  \\ 
		&& \st~   \sum_{j \in \loc} y_j &= p \label{pac3:sumy} \\       
		&& \sum_{A\in \mathcal{J}^\alpha} x_{iA} & =  1 && \forall i \in \cus\label{pac3:sumx} \\
		&& x_{iA} &\leq y_j && \forall i \in \cus,\forall A\in \mathcal{J}^\alpha,\forall j \in A\label{pac3:xy}\\   
		&& \sum_{A\in \mathcal{J}^\alpha} d_{iA} x_{iA} & \leq \obj && \forall i \in \cus 
		\label{pac3:sumdx}\\
		&& x_{iA} &\in  \{0,1\} \qquad&& \forall i \in \cus, \forall A\in \mathcal{J}^\alpha 
		\label{pac3:xbin}\\
		&& y_{j} &\in  \{0,1\} && \forall j \in \loc  \label{pac3:ybin}\\
		&& \obj & \in \mathbb{R}.
	\end{alignat}
\end{subequations}

Again, the number of open facilities is set by~\eqref{pac3:sumy}. 
Constraints~\eqref{pac3:sumx} ensure that each customer is assigned to 
exactly one subset consisting of~$\alpha$ different facilities, which all need to 
be open by~\eqref{pac3:xy}.
The objective function value~$z$ is set to the maximum $\alpha$-distance over all 
customers by~\eqref{pac3:z} and~\eqref{pac3:sumdx}. 
Constraints~\eqref{pac3:xbin} and~\eqref{pac3:ybin} enforce~$x$ and~$y$ to be binary.
Note, that for the case~$I=J$, formulation~\myref{F3} can be viewed as a hypergraph formulation for a graph~$G = (I, \mathcal{J}^\alpha)$, i.e., the set~$I$ represents the nodes of the hypergraph and each edge consists of~$\alpha$ such nodes. 
In terms of the number of variables and constraints, formulation~\myref{F3} is usually the largest as it uses~$O(nm^\alpha)$ variables, of which all but one are binary, and~$O(\alpha nm^\alpha)$ constraints.

Next, we introduce valid inequalities for \myref{F3} that reduce the number of constraints by aggregating inequalities~\eqref{pac3:xy}. They will become important for the polyhedral study that we conduct in Section~\ref{sec:polyhedral_study}.  

\begin{lemma}\label{lem:d3_valid_1}
	The inequalities
	\begin{align}
		\sum_{\substack{A\in \mathcal{J}^\alpha\\j\in A}} x_{iA} \leq y_j
		\qquad \forall i\in I, \forall j\in J \label{pac3:valid_1}
	\end{align}
	are valid inequalities for~\myref{F3}.
\end{lemma}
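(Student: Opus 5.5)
The argument works directly with feasible solutions of \myref{F3}: validity here means that every feasible solution of \myref{F3} satisfies \eqref{pac3:valid_1}, and integrality makes this almost immediate.

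Fix a feasible solution $(\bar{x},\bar{y},\bar{z})$ of \myref{F3}, a customer $i\in I$ and a facility $j\in J$. By \eqref{pac3:sumx} and \eqref{pac3:xbin}, exactly one set $A_i\in\mathcal{J}^\alpha$ has $\bar{x}_{iA_i}=1$, and $\bar{x}_{iA}=0$ for all $A\neq A_i$. The left-hand side of \eqref{pac3:valid_1} is therefore
\[
\sum_{\substack{A\in\mathcal{J}^\alpha\\ j\in A}}\bar{x}_{iA}=\begin{cases}1 & \text{if } j\in A_i,\\ 0 & \text{otherwise.}\end{cases}
\]

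Now split into the two cases. If $j\notin A_i$, the left-hand side is $0\le\bar{y}_j$, since $\bar{y}_j\in\{0,1\}$ by \eqref{pac3:ybin}. If $j\in A_i$, then \eqref{pac3:xy} applied to $A=A_i$ gives $1=\bar{x}_{iA_i}\le\bar{y}_j$, so \eqref{pac3:valid_1} holds as well. Since $i$ and $j$ were arbitrary, this covers all inequalities \eqref{pac3:valid_1}.

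The only delicate point is to rely on integrality of $x$ and not on \eqref{pac3:xy} alone. In the LP-relaxation, summing \eqref{pac3:xy} over all $A\ni j$ only yields the weaker bound $\sum_{A\ni j}x_{iA}\le\binom{m-1}{\alpha-1}\,y_j$, so \eqref{pac3:valid_1} genuinely tightens the relaxation. This is presumably why it matters for the polyhedral study, but it does not affect validity for the integer formulation.
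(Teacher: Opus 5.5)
Your proof is correct. It differs from the paper's only in the case distinction. The paper splits on the value of $\tilde{y}_j\in\{0,1\}$:
\begin{itemize}
\item if $\tilde{y}_j=1$, the left-hand side of \eqref{pac3:valid_1} is at most $\sum_{A\in\mathcal{J}^\alpha}\tilde{x}_{iA}=1$, by \eqref{pac3:sumx} and nonnegativity of $x$;
\item if $\tilde{y}_j=0$, then \eqref{pac3:xy} forces every $\tilde{x}_{iA}$ with $j\in A$ to vanish.
\end{itemize}
You instead split on whether $j$ belongs to the unique set $A_i$ with $\bar{x}_{iA_i}=1$, which uses integrality of $x$.

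The paper's version never uses integrality of $x$, only $x\geq 0$ together with binary $y$. It therefore shows that \eqref{pac3:valid_1} is valid even for the semi-relaxation (F3-Rx) considered later. Your argument, conversely, still works when only $x$ is binary.

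This means your closing remark overstates the role of integrality of $x$. Integrality of either $x$ or $y$ suffices; \eqref{pac3:valid_1} fails to follow from \eqref{pac3:xy} only when both are relaxed, as in the LP solution of Example~\ref{ex:d1vsd3}.
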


\begin{proof}
	Let~$(\tilde{x},\tilde{y},\tilde{z})$ be a feasible solution of~\myref{F3} and let~$i\in \cus$ and~$j\in \loc$.  
	We have~$\tilde{y}_{j}\in \{0,1\}$ by~\eqref{pac3:ybin}.
	If~$\tilde{y}_{j} = 1$, then 
	$
	\sum_{{A\in \mathcal{J}^\alpha:j \in A}} \tilde{x}_{i A} \leq \sum_{A\in \mathcal{J}^\alpha} \tilde{x}_{i A} = 1 =  \tilde{y}_{j}
	$
	follows from~\eqref{pac3:sumx}. 
	If~$\tilde{y}_{j} = 0$, then~$\sum_{A\in \mathcal{J}^\alpha:\\j\in A} \tilde{x}_{iA} = 0 \leq \tilde{y}_j$ follows from \eqref{pac3:xy} and \eqref{pac3:xbin}.
\end{proof}

Note, that the inequalities~\eqref{pac3:valid_1} supersede constraints~\eqref{pac3:xy}. 
We therefore let \mytag{F3-V} be formulation~\myref{F3} with~\eqref{pac3:valid_1} but without \eqref{pac3:xy}. Then, \myref{F3-V} is our fourth MIP formulation of the \paCP with~$O(nm^\alpha)$~variables, of which all but one are binary, and~$O(nm)$~constraints. 

We close this section by giving optimality-preserving inequalities for~\myref{F3} and~\myref{F3-V}
which are based on the idea that each customer~$i\in I$ should only be 
assigned to a subset $A \in \mathcal{J}^\alpha$ if there is no open facility that 
is closer to $i$ then the facilities in~$A$. These inequalities can 
be viewed as an extension of the well-known \emph{closest-assignment constraints} 
from facility location literature, see, e.g., \citet{espejo2012closest} for more 
details.

\begin{theorem} 
	The inequalities
	\begin{equation}
		y_j + \sum_{\substack{A\in \mathcal{J}^\alpha\\ j\not\in A\\ d_{ij}< 
		d_{iA}^{\max} 
		}} x_{iA} \leq 1 \qquad \forall i\in I, \forall j \in J   
		\label{pac3:opt_ineq} 
	\end{equation}
	are optimality-preserving inequalities for~\myref{F3} and~\myref{F3-V}. 
\end{theorem}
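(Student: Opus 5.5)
We need to show that at least one optimal solution of \myref{F3} (and of \myref{F3-V}) satisfies all inequalities~\eqref{pac3:opt_ineq}. The plan is to start from an arbitrary optimal solution $(\bar x,\bar y,\bar z)$ of \myref{F3}, keep $\bar y$ and $\bar z$, and reassign every customer to a subset of its $\alpha$ closest open facilities.

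Let $P=\{j\in J:\bar y_j=1\}$, so $|P|=p\ge\alpha$. For each customer $i\in I$ we fix one set $A_i^*\subseteq P$ with $|A_i^*|=\alpha$ such that every $j\in P\setminus A_i^*$ satisfies $d_{ij}\ge d_{iA_i^*}^{\max}$. Such a set exists: sort $P$ by distance to $i$ with ties broken arbitrarily and take the first $\alpha$ facilities. We then define $\hat x_{iA}=1$ if $A=A_i^*$ and $\hat x_{iA}=0$ otherwise.

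Feasibility of $(\hat x,\bar y,\bar z)$ is routine to check.
\begin{itemize}
\item Constraint~\eqref{pac3:sumx} holds by construction, since exactly one $\hat x_{iA}$ equals one for each $i$.
\item Constraint~\eqref{pac3:xy} holds because $A_i^*\subseteq P$.
\item Inequality~\eqref{pac3:valid_1} holds because its left-hand side is at most $1$, and it is $0$ whenever $j\notin P$.
\item For~\eqref{pac3:sumdx}, $A_i^*$ minimizes $d_{iA}$ over all $A\subseteq P$ with $|A|=\alpha$. The set $A_i$ assigned in $\bar x$ is contained in $P$ by~\eqref{pac3:xy} (resp.~\eqref{pac3:valid_1}). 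Hence $d_{iA_i^*}\le d_{iA_i}\le\bar z$.
\end{itemize}
Since $\bar z$ is unchanged, the new solution is optimal for both \myref{F3} and \myref{F3-V}.

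It remains to verify~\eqref{pac3:opt_ineq} for fixed $i$ and $j$, and the tie-handling here is the only delicate step. If $\bar y_j=0$, the left-hand side equals $\sum\hat x_{iA}\le 1$. If $\bar y_j=1$, we must show that the unique set $A_i^*$ with $\hat x_{iA_i^*}=1$ does not appear in the sum. Assume for contradiction that $j\notin A_i^*$ and $d_{ij}<d_{iA_i^*}^{\max}$. Because $j\in P\setminus A_i^*$, the choice of $A_i^*$ gives $d_{ij}\ge d_{iA_i^*}^{\max}$, a contradiction. Therefore the sum is $0$ and the left-hand side equals $1$.

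This works precisely because~\eqref{pac3:opt_ineq} uses the strict inequality $d_{ij}<d_{iA}^{\max}$, so ties are harmless.
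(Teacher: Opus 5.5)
Your proof is correct, but it takes a different route from the paper. The paper uses a local exchange argument. It takes an optimal solution in which \eqref{pac3:opt_ineq} is violated for some $i$ and some open $j$, so that $i$ is assigned to a set $A'$ with $j\notin A'$ and $d_{ij}<d^{\max}_{iA'}$. It then replaces the farthest facility $j'\in A'$ by $j$ and notes that the new assignment is still feasible with $d_{iA^*}<d_{iA'}$, so the objective value is unchanged. You instead keep $\bar y$ and $\bar z$ and directly assign every customer to $\alpha$ of its closest open facilities with a fixed tie-breaking, then check the inequalities for all pairs $(i,j)$ at once.

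What your construction buys is completeness. A single swap repairs only the one violated pair. Afterwards, some other open $j''\notin A^*$ with $d_{ij''}<d^{\max}_{iA^*}$ may still violate \eqref{pac3:opt_ineq}. The paper's claim that the swapped solution satisfies all the inequalities therefore tacitly needs the swap to be iterated. That iteration does terminate, since each swap strictly decreases $d_{iA}$ for that customer and customers do not interact. Your argument avoids the iteration entirely, and it makes explicit why ties cause no trouble: the index set uses the strict inequality $d_{ij}<d^{\max}_{iA}$. The exchange argument's only advantage is that it modifies a given optimal solution minimally rather than rebuilding all assignments.
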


\begin{proof}
	Let~$(\tilde{x},\tilde{y},\tilde{z})$ be an optimal solution of~\myref{F3} and let~$i\in I$ and~$j\in J$.  
	If~$\tilde{y}_j = 0$, then~\eqref{pac3:opt_ineq} follows from~\eqref{pac3:sumx}. 
	Therefore, let~$\tilde{y}_j = 1$ and assume~\eqref{pac3:opt_ineq} does not hold, i.e., assume there exists 
	some~$A' \in \mathcal{J}^\alpha$ with~$j\not \in A'$, $d_{ij} < d_{iA'}^{\max}$ and $\tilde{x}_{iA'} = 1$. Let $j'\in A'$ such that~$d_{ij'} = d_{iA'}^{\max}$. 
	Define~$A^* = \left(~A'~\setminus~\{j'\}~\right)~\cup~\{j\}~\in~\mathcal{J}^\alpha$ and set $x^*_{iA^*} = 1$ and $x^*_{iA} = 0$ for all $A\in \mathcal{J}^\alpha \setminus \{A^*\}$.
	Then, $d_{iA^*} < d_{iA'}$ holds because $d_{ij} < d_{ij'}$. Thus $\sum_{A\in \mathcal{J}^\alpha} d_{iA} x^*_{iA} = d_{iA^*} < d_{iA'} = \sum_{A\in \mathcal{J}^\alpha} d_{iA} \tilde{x}_{iA}  \leq \tilde{z}$.  
	It is therefore easy to see that~$(x^*, \tilde{y}, \tilde{z})$ is a feasible solution of \myref{F3} with the same objective function value as~$(\tilde{x},\tilde{y},\tilde{z})$ and satisfies inequalities~\eqref{pac3:opt_ineq}.  
	Hence, there is at least one optimal solution of~\myref{F3} satisfying~\eqref{pac3:opt_ineq}. The proof for~\myref{F3-V} can be done analogously.
\end{proof}

This closes the discussion of our four different MIP formulations of the~\paCP. 

\section{Polyhedral study}\label{sec:polyhedral_study}

In this section, we compare the previously introduced MIP formulations \myref{F1}, \myref{F2},~\myref{F3} and~\myref{F3-V} of the \paCP with respect to their LP-relaxations. 
Note that in \myref{F1}, the relaxed conditions~$x_{ij} \in [0,1]$ and~$y_j \in [0,1]$ for all~$i\in I$ and $j\in J$ are equivalent to
\begin{subequations}
\begin{align}
	x_{ij} &\geq 0  \qquad \forall i \in \cus, \forall j\in \loc \label{pac1r:xreal}\\
	y_j &\leq 1  \qquad \forall j \in \loc  \label{pac1r:yreal}
\end{align}
\end{subequations}
by constraints~\eqref{pac1:xy}.
Therefore, formulation~\mytag{F1-R} is \myref{F1} without~\eqref{pac1:xbin} and~\eqref{pac1:ybin} and with~\eqref{pac1r:xreal} and~\eqref{pac1r:yreal}.
An analogous statement is true for the MIP formulations~\myref{F2},~\myref{F3}, and \myref{F3-V} as well, with the LP-relaxations denoted by \mytag{F2-R},  \mytag{F3-R}, and \mytag{F3-V-R}, respectively. 
In particular,~\myref{F3-R} is~\myref{F3} without~\eqref{pac3:xbin} and~\eqref{pac3:ybin} and with 
\begin{subequations}
	\begin{align}
		&& x_{iA} &\geq 0 && \forall i \in \cus, \forall A\in \mathcal{J}^\alpha \label{pac3r:xreal}\\
		&& y_j &\leq 1 && \forall j \in \loc.  \label{pac3r:yreal}
	\end{align}
\end{subequations}

The following lemma shows that the LP-relaxations of~\myref{F1} and \myref{F2} are equally strong.

\begin{lemma}\label{lem:d1=d2}
	$\nu \myref{F1-R} = \nu \myref{F2-R}$.
\end{lemma}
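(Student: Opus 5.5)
We prove the two inequalities $\nu\myref{F2-R} \geq \nu\myref{F1-R}$ and $\nu\myref{F1-R} \geq \nu\myref{F2-R}$ by mapping feasible solutions of one relaxation to feasible solutions of the other with the same $z$-value.

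\emph{Direction $\nu\myref{F1-R}\le\nu\myref{F2-R}$.} Take a feasible solution $(\myx{1},\ldots,\myx{\alpha},y,z)$ of \myref{F2-R} and aggregate it by setting $x_{ij}=\sum_{\beta=1}^\alpha \myx{\beta}_{ij}$, keeping $y$ and $z$. We check each constraint of \myref{F1-R}:
\begin{itemize}
\item \eqref{pac1:sumy} holds unchanged.
\item \eqref{pac1:sumx} follows by summing \eqref{pac2:sumx} over $\beta$, giving $\sum_j x_{ij}=\alpha$.
\item \eqref{pac1:xy} is exactly \eqref{pac2:xy}.
\item \eqref{pac1:sumdx} is exactly \eqref{pac2:sumdx}.
\item Nonnegativity \eqref{pac1r:xreal} and $y_j\le 1$ are inherited.
\end{itemize}
So every \myref{F2-R} solution yields an \myref{F1-R} solution with the same objective value.

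\emph{Direction $\nu\myref{F2-R}\le\nu\myref{F1-R}$.} This is the less immediate direction, because of the ordering constraints \eqref{pac2:comparex}. Given $(x,y,z)$ feasible for \myref{F1-R}, the simplest choice is the uniform split $\myx{\beta}_{ij}=x_{ij}/\alpha$ for all $\beta$. Then:
\begin{itemize}
\item \eqref{pac2:sumx} holds, since $\sum_j \myx{\beta}_{ij}=\sum_j x_{ij}/\alpha=1$.
\item \eqref{pac2:xy} holds, since $\sum_\beta \myx{\beta}_{ij}=x_{ij}\le y_j$.
\item \eqref{pac2:comparex} holds with equality, because $\sum_j d_{ij}\myx{\beta}_{ij}$ is the same for every $\beta$.
\item \eqref{pac2:sumdx} holds, since $\sum_j d_{ij}\sum_\beta \myx{\beta}_{ij}=\sum_j d_{ij}x_{ij}\le z$.
\item Nonnegativity and $y_j\le1$ are inherited.
\end{itemize}
The $z$-value is again unchanged.

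The two mappings together give equality of the optimal values. The only point that needs care is the ordering constraints \eqref{pac2:comparex}, and the symmetric split is chosen precisely so that they hold with equality. One should also confirm that the relaxation of \myref{F2} really reduces to nonnegativity plus $y_j\le 1$: by \eqref{pac2:xy}, $\myx{\beta}_{ij}\le y_j\le 1$, so the upper bounds on the $x$-variables are implied, as the paper states.
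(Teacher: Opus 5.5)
Your proof is correct and follows the paper's own argument: the paper uses the same uniform split $\hat{x}^{(\beta)} = \frac{1}{\alpha}\bar{x}$ to go from \myref{F1-R} to \myref{F2-R}, and the same aggregation $\bar{x} = \sum_{\beta=1}^\alpha \hat{x}^{(\beta)}$ for the reverse direction. The only difference is cosmetic: the paper maps optimal solutions, whereas you map arbitrary feasible solutions and so do not need to know that optima exist.
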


\begin{proof}
	We start by showing~$\nu \myref{F1-R} \geq \nu \myref{F2-R}$. To do so, let~$(\bar{x}, \bar{y}, \bar{z})$ be an optimal solution of \myref{F1-R}, so~$\bar{z} = \nu \myref{F1-R}$. 
	We define~$\hat{y} = \bar{y}$,~$\hat{z} = \bar{z}$ and~$\hat{x}^{(\beta)} = \frac{1}{\alpha}\bar{x}$ for all~$\beta\in \{1, \ldots, \alpha\}$. 
	It is easy to see that~$(\hat{x},\hat{y},\hat{z})$ is a feasible solution of \myref{F2-R} with the objective function value~$\hat{z}$. 
	Hence,~$\nu \myref{F2-R} \leq \hat{z} = \bar{z} = \nu \myref{F1-R}$ holds.  
	
	Next, we show~$\nu \myref{F1-R} \leq \nu \myref{F2-R}$. For this purpose, let~$(\hat{x},\hat{y},\hat{z})$ be an optimal solution of \myref{F2-R}, so~$\hat{z} = \nu \myref{F2-R}$.	
	Define~$\bar{y}= \hat{y}$,~$ \bar{z}= \hat{z}$ and~$\bar{x} = \sum_{\beta = 1}^\alpha \hat{x}^{(\beta)}$. It is easy to see that~$(\bar{x}, \bar{y}, \bar{z})$ is a feasible solution of \myref{F1-R} with objective function value~$\bar{z}$.
	Thus,~$\nu \myref{F1-R} \leq \bar{z} = \hat{z} =  \nu \myref{F2-R}$ holds. 
\end{proof}

Unfortunately, such equality does not hold for~\myref{F1} and~\myref{F3}, 
as the following example shows.
\begin{examplex}\label{ex:d1vsd3}
	Let~$I = J = \{1, 2,3\}$,~$p=\alpha=2$, and~$d_{1,1} = d_{2,2} = d_{3,3}=0, 
	d_{1,2}=d_{2,1}=d_{2,3}=d_{3,2}=1$ and $d_{1,3}=d_{3,1}=2$ as depicted in 
	Figure~\ref{fig:ex_graph}. 
	One can verify that~$\bar{y}=\left(\bar{y}_1, \bar{y}_2, \bar{y}_3\right)=\left(\frac{2}{3}, \frac{2}{3}, \frac{2}{3}\right)$ and~$\bar{x}_{ij} = \frac{2}{3}$ for all~$i,j\in \{1, 2,3\}$ is an optimal solution of \myref{F1-R} with objective function value~$\bar{z} = 2$.
	On the other hand,~$\left(\tilde{x}, \tilde{y}, \tilde{z}\right)$ with~$\tilde{y}~=\left(\tilde{y}_1, \tilde{y}_2, \tilde{y}_3\right)=\left(\frac{2}{3}, \frac{2}{3}, \frac{2}{3}\right)$ and 
	$\tilde{x}_1 = \left(\tilde{x}_{1,\{1,2\}},\tilde{x}_{1,\{1,3\}},\tilde{x}_{1,\{2,3\}}\right) = \left(\frac{2}{3}, \frac{1}{3}, 0\right)$, 
	$\tilde{x}_2 = \left(\tilde{x}_{2,\{1,2\}},\tilde{x}_{2,\{1,3\}},\tilde{x}_{2,\{2,3\}}\right) = \left(\frac{1}{3}, \frac{2}{3}, 0\right)$ and
	$\tilde{x}_3 = \left(\tilde{x}_{3,\{1,2\}},\tilde{x}_{3,\{1,3\}},\tilde{x}_{3,\{2,3\}}\right) = \left(0, \frac{1}{3}, \frac{2}{3}\right)$ 
	is a feasible solution of \myref{F3-R} with objective function value~$\tilde{z}=\frac{5}{3} < 2$.
	So for this instance $\nu \myref{F3-R} <  \nu \myref{F1-R}$ holds.
\end{examplex}

\begin{figure}[h!]
	\centering
	\begin{tikzpicture}
		% Facilities = Customers
		\node (A) at (0, 0) [circle,draw] {1};
		\node (B) at (2, 0) [circle,draw] {2};
		\node (C) at (4, 0) [circle,draw] {3};
		
		% Distances
		\draw[-] (A) to node[below] {1} (B);
		\draw[-] (B) to node[below] {1} (C);
		\draw[-] (A) to [bend left=20] node[above] {2} (C);		
	\end{tikzpicture}
	\caption{Illustration of Example~\ref{ex:d1vsd3} 
		}
	\label{fig:ex_graph}
\end{figure}
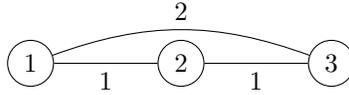

Example~\ref{ex:d1vsd3} shows that the LP-relaxation of~\myref{F3} cannot be as strong as the LP-relaxation of~\myref{F1}, since there is at least one instance for which~\myref{F3-R} yields a strictly smaller lower bound. 
One reason for this lies in the weak restriction of the assignment variables 
in~\myref{F3}. Consider a fixed customer~$i\in I$ and a fixed facility 
location~$j \in J$. Then there are~$\binom{m-1}{\alpha-1}$ many subsets 
in~$\mathcal{J}^\alpha$ containing~$j$. For each of these subsets, the 
corresponding assignment variable is bounded by~$y_j$. However, the sum of these 
subset assignment variables can be greater than~$y_j$ as it is the case in 
Example~\ref{ex:d1vsd3}. 
Indeed, for~$i=1$ and~$j=1$, we have~$\sum_{A\subseteq \{1,2,3\}: |A|=2, 1\in A} \tilde{x}_{1,A} = \tilde{x}_{1,\{1,2\}} + \tilde{x}_{1,\{1,3\}} = \frac{2}{3} + \frac{1}{3} = 1 > \frac{2}{3} = \tilde{y}_1$.
Adding the valid inequalities~\eqref{pac3:valid_1} to~\myref{F3} as it is done to obtain~\myref{F3-V}, however, cuts off the solution~$(\tilde{x},\tilde{y},\tilde{z})$, as the next example shows. 

\begin{examplex}\label{ex:d1vsd3v}
	Consider the instance of Example~\ref{ex:d1vsd3}.
	An optimal solution of \myref{F3-V-R} is 
	$\left(x^*, y^*, z^*\right)$ with~$y^*=\left(y^*_1, y^*_2, y^*_3\right)=\left(\frac{2}{3}, \frac{2}{3}, \frac{2}{3}\right)$,  
	$x^*_{i,A} = \frac{1}{3}$ for all~$i\in \{1,2,3\}$ and~$A\in \mathcal{J}^2 = \left\{\{1,2\}, \{1,3\}, \{2,3\}\right\}$
	and objective function value~$z^* = 2$.
\end{examplex}

In fact, for the instance in Example~\ref{ex:d1vsd3v} the optimal objective function value of \myref{F3-V-R} is equal to the one of~\myref{F1-R}. 
The following two lemmata show that this is no coincidence.   

\begin{lemma}\label{lem:d1<=d3v}
	$\nu\myref{F1-R} \leq \nu\myref{F3-V-R}$.
\end{lemma}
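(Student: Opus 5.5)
The plan is to map any feasible solution of \myref{F3-V-R} to a feasible solution of \myref{F1-R} with the same objective value. This is the same style as the proof of Lemma~\ref{lem:d1=d2}. Let $(\tilde{x},\tilde{y},\tilde{z})$ be an optimal solution of \myref{F3-V-R}, so $\tilde{z}=\nu\myref{F3-V-R}$. We define $\bar{y}=\tilde{y}$, $\bar{z}=\tilde{z}$ and, for all $i\in I$ and $j\in J$,
\begin{align*}
	\bar{x}_{ij} = \sum_{\substack{A\in \mathcal{J}^\alpha\\ j\in A}} \tilde{x}_{iA}.
\end{align*}
It then remains to verify that $(\bar{x},\bar{y},\bar{z})$ satisfies every constraint of \myref{F1-R}. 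This gives $\nu\myref{F1-R}\leq \bar{z}=\nu\myref{F3-V-R}$.

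The checks go constraint by constraint.
\begin{itemize}
	\item Constraint~\eqref{pac1:sumy} and the bounds~\eqref{pac1r:yreal} are inherited directly from~\eqref{pac3:sumy} and~\eqref{pac3r:yreal}.
	\item Nonnegativity~\eqref{pac1r:xreal} follows from~\eqref{pac3r:xreal}.
	\item For~\eqref{pac1:sumx}, we swap the order of summation. Each $A\in\mathcal{J}^\alpha$ contains exactly $\alpha$ facilities, so
	\begin{align*}
		\sum_{j\in J}\bar{x}_{ij}=\sum_{A\in\mathcal{J}^\alpha}|A|\,\tilde{x}_{iA}=\alpha\sum_{A\in\mathcal{J}^\alpha}\tilde{x}_{iA}=\alpha
	\end{align*}
	by~\eqref{pac3:sumx}.
	\item Constraint~\eqref{pac1:xy}, $\bar{x}_{ij}\leq\bar{y}_j$, is precisely the aggregated inequality~\eqref{pac3:valid_1}, which is present in \myref{F3-V}.
	\item For~\eqref{pac1:sumdx}, we again swap sums:
	\begin{align*}
		\sum_{j\in J} d_{ij}\bar{x}_{ij}=\sum_{A\in\mathcal{J}^\alpha}\tilde{x}_{iA}\sum_{j\in A}d_{ij}=\sum_{A\in\mathcal{J}^\alpha}d_{iA}\tilde{x}_{iA}\leq\tilde{z}
	\end{align*}
	by~\eqref{pac3:sumdx}.
\end{itemize}

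The only step that is not pure bookkeeping is constraint~\eqref{pac1:xy}. This is exactly where \myref{F3-R} alone fails, as Example~\ref{ex:d1vsd3} shows, and where the valid inequalities~\eqref{pac3:valid_1} of Lemma~\ref{lem:d3_valid_1} are needed. Once those are available the argument is immediate, so I expect no real obstacle. The only care needed is to ensure that the double-counting identities use $|A|=\alpha$ correctly.
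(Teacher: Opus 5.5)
Your proposal is correct and matches the paper's proof essentially step for step. The paper uses the same aggregation $\bar{x}_{ij}=\sum_{A\in\mathcal{J}^\alpha:\, j\in A}\tilde{x}_{iA}$ with $\bar{y}=\tilde{y}$, $\bar{z}=\tilde{z}$, and the same double-counting arguments for \eqref{pac1:sumx} and \eqref{pac1:sumdx}. Your remark that \eqref{pac1:xy} is exactly the aggregated inequality \eqref{pac3:valid_1} makes explicit what the paper leaves as ``clearly satisfied.''
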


\begin{proof}
	Let~$(\tilde{x},\tilde{y},\tilde{z})$ be an optimal solution of \myref{F3-V-R}, so~$\tilde{z}=\nu \myref{F3-V-R}$.
	Define~$\bar{y}= \tilde{y}$,~$ \bar{z}= \tilde{z}$ and 
	\[
	\bar{x}_{ij} = \sum_{\substack{A\in \mathcal{J}^\alpha\\ j\in A}} \tilde{x}_{iA} 
	\]
	for all~$i\in \cus$ and~$j\in \loc$.
	Then, clearly \eqref{pac1:sumy}, \eqref{pac1:xy}, \eqref{pac1r:xreal}, \eqref{pac1r:yreal} and \eqref{pac1:zbin} are satisfied by $(\bar{x}, \bar{y}, \bar{z})$. 
	We also have
	\[
	\sum_{j\in \loc} \bar{x}_{ij} = \sum_{j\in \loc} \sum_{\substack{A\in \mathcal{J}^\alpha\\ j\in A}} \tilde{x}_{iA} = \alpha \sum_{A\in \mathcal{J}^\alpha} \tilde{x}_{iA} = \alpha
	\]
	for every~$i\in \cus$ since every~$A\in \mathcal{J}^\alpha$ contains exactly~$\alpha$ facilities~$j\in \loc$ and so, each~$\tilde{x}_{iA}$ is counted~$\alpha$ times. This shows \eqref{pac1:sumx}.
	Furthermore, we have
	\begin{align*}
		\sum_{j\in \loc} d_{ij} \bar{x}_{ij} &= \sum_{j\in \loc} d_{ij} \left( \sum_{\substack{A\in \mathcal{J}^\alpha\\j\in A}} \tilde{x}_{iA} \right) 
		= \sum_{j\in \loc} \sum_{\substack{A\in \mathcal{J}^\alpha\\j\in A}} d_{ij} \tilde{x}_{iA} 
		= \sum_{A\in \mathcal{J}^\alpha} \left( \sum_{j\in A} d_{ij} \right) \tilde{x}_{iA} 
		= \sum_{A\in \mathcal{J}^\alpha} d_{iA}  \tilde{x}_{iA} \leq \tilde{z} = \bar{z}
	\end{align*}
	for all~$i\in I$ by using~\eqref{pac3:sumdx}.
	Therefore, \eqref{pac1:sumdx} are satisfied and 
	$(\bar{x}, \bar{y}, \bar{z})$ is a feasible solution of \myref{F1-R} with objective function value~$\bar{z} = \tilde{z}$ which proves~$\nu \myref{F1-R} \leq \nu \myref{F3-V-R}$.
\end{proof}

\begin{lemma}\label{lem:d1>=d3-v}
	$\nu \myref{F1-R} \geq \nu \myref{F3-V-R}$.
\end{lemma}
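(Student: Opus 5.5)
The plan is to take an optimal solution $(\bar{x},\bar{y},\bar{z})$ of \myref{F1-R}, so that $\bar{z}=\nu\myref{F1-R}$, and build a feasible solution $(\tilde{x},\tilde{y},\tilde{z})$ of \myref{F3-V-R} with the same objective value. I set $\tilde{y}=\bar{y}$ and $\tilde{z}=\bar{z}$. The whole difficulty lies in defining $\tilde{x}$. This is the reverse of the aggregation in Lemma~\ref{lem:d1<=d3v}: for each customer $i$ I want a disaggregation $\tilde{x}_{iA}\ge 0$ over $A\in\mathcal{J}^\alpha$ with
\[
\sum_{A\in\mathcal{J}^\alpha}\tilde{x}_{iA}=1
\qquad\text{and}\qquad
\sum_{\substack{A\in\mathcal{J}^\alpha\\ j\in A}}\tilde{x}_{iA}=\bar{x}_{ij}\quad\forall j\in J .
\]

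Given such a disaggregation, the remaining checks are routine. Constraint \eqref{pac3:sumx} holds by construction, and \eqref{pac3r:xreal} and \eqref{pac3r:yreal} are inherited. The aggregated constraint \eqref{pac3:valid_1} follows from $\sum_{A\ni j}\tilde{x}_{iA}=\bar{x}_{ij}\le\bar{y}_j$ by \eqref{pac1:xy}. For \eqref{pac3:sumdx}, the same double-counting exchange of sums as in the proof of Lemma~\ref{lem:d1<=d3v}, run in reverse, gives
\[
\sum_{A\in\mathcal{J}^\alpha} d_{iA}\tilde{x}_{iA}=\sum_{j\in J} d_{ij}\bar{x}_{ij}\le\bar{z}.
\]
Hence $\nu\myref{F3-V-R}\le\bar{z}$.

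The main obstacle is proving that the disaggregation exists. This amounts to the fact that every point of the hypersimplex $\{x\in[0,1]^m:\sum_j x_j=\alpha\}$ is a convex combination of its $0/1$ vertices, which are exactly the incidence vectors of sets $A\in\mathcal{J}^\alpha$. The row $\bar{x}_{i\cdot}$ lies in this set because $\sum_j\bar{x}_{ij}=\alpha$ and $0\le\bar{x}_{ij}\le\bar{y}_j\le 1$.

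I would give a constructive proof rather than cite integrality of the hypersimplex.
\begin{enumerate}
\item Order $J=\{j_1,\dots,j_m\}$ and lay consecutive half-open intervals $I_{j_k}$ of length $\bar{x}_{ij_k}$ side by side, covering $[0,\alpha)$.
\item For $t\in[0,1)$, let $A(t)$ be the set of facilities whose interval contains one of the points $t,t+1,\dots,t+\alpha-1$.
\item Each interval has length at most $1$ and is half-open, so it contains at most one of these $\alpha$ points. Every point lies in exactly one interval, so $|A(t)|=\alpha$.
\item Set $\tilde{x}_{iA}$ equal to the Lebesgue measure of $\{t\in[0,1):A(t)=A\}$. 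Since $A(t)$ is piecewise constant in $t$, this is a finite sum of lengths.
\item The values $\tilde{x}_{iA}$ sum to $1$.
\item For each $j$, the measure of those $t$ with $j\in A(t)$ equals the total length of $I_j$ reduced modulo $1$, which is $\bar{x}_{ij}$. This uses again that $I_j$ has length at most $1$, so it wraps around at most once.
\end{enumerate}

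Doing this independently for every customer $i\in I$ completes the construction, and together with Lemma~\ref{lem:d1<=d3v} it yields equality of the two LP bounds.
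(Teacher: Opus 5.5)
Your proof is correct, but it proves the key step by a different route than the paper. The key step is that each row $\bar{x}_{i\cdot}$ admits a disaggregation over $\mathcal{J}^\alpha$.

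\textbf{The paper's route.} The paper defines the polytope $P_i$ using $\sum_{A\ni j} x_{iA}\le\bar{x}_{ij}$, which is equivalent to your equalities because both sides sum to $\alpha$. It then shows $P_i\neq\emptyset$ non-constructively via LP duality. The dual $(D^i)$ of $\max\{0: x_i\in P_i\}$ is feasible and bounded below by $0$, because for $v\geq 0$ the sum of the $\alpha$ smallest entries of $v$ is at most $\sum_{j}\bar{x}_{ij}v_j$. Strong duality then gives a primal feasible point. So the paper uses the dual description of the hypersimplex: a linear function over it is minimized at a $0/1$ vertex.

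\textbf{Your route.} You prove the primal statement directly: every point of the hypersimplex is a convex combination of $0/1$ vertices, obtained by systematic sampling. Your steps 3 and 6 use $\bar{x}_{ij}\leq 1$ and half-openness exactly where they are needed.

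\textbf{What each buys.} Your construction needs no duality and yields the aggregation constraints with equality. It also produces an explicit decomposition supported on at most $m$ sets, one per piece of $[0,1)$ between consecutive fractional parts of the partial sums. Hence an optimal solution of \myref{F1-R} maps to one of \myref{F3-V-R} without solving any auxiliary LP. The paper's argument avoids the interval bookkeeping. It is also the same dualization technique it reuses in the proof of Lemma~\ref{LB1_geq_LB3V}, where the separation LP for the lifted inequalities is dualized to recover the $x_{iA}$ variables.

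One small omission: your list of routine checks should also include \eqref{pac3:sumy}, which is inherited from \eqref{pac1:sumy} via $\tilde{y}=\bar{y}$.
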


\begin{proof}
	Let~$(\bar{x}, \bar{y}, \bar{z})$ be an optimal solution of~\myref{F1-R}. 
	The goal is to construct a feasible solution~$(\tilde{x}, \tilde{y}, \tilde{z})$ of~\myref{F3-V-R} with the same objective function value, i.e., with~$\tilde{z} = \bar{z}$. 
	We start by fixing a customer~$i\in I$ and defining the polytope 
	\begin{subequations}
	\begin{alignat}{3}
		P_i = \Big\{ x_i\in \mathbb{R}^{\binom{m}{\alpha}}: && \sum_{A\in \mathcal{J}^\alpha} x_{iA} &= 1, \label{poly1}\\
		&& \sum_{\substack{A\in \mathcal{J}^\alpha\\ j\in A}} x_{iA} &\leq \bar{x}_{ij} \quad \forall j\in J, \label{poly2}\\
		&& x_i &\geq 0 & \Big\}, \label{poly3}
	\end{alignat}
	\end{subequations}
	where~$x_{i} = (x_{iA})_{A\in \mathcal{J}^\alpha}$.
	We now show that~$P_i$ is non-empty. 
	For this, consider the maximization problem~$(P^i)$~=~$\max \{0: x_i\in P_i\}$ and its dual problem
	\begin{subequations}
		\begin{alignat}{3}
			&(D^i) \qquad
			&\min \phantom{i} \quad u + \sum_{j\in J}&\ \bar{x}_{ij} v_j  \label{dual:obj}\\
			&&\st~ u + \sum_{j\in A} v_j &\geq 0 \quad \forall A\in \mathcal{J}^\alpha \label{dual:constr}\\
			&& v_j &\geq 0 \quad \forall j\in J \label{dual:vbin}\\
			&& u &\in \mathbb{R}. \label{dual:u}
		\end{alignat}
	\end{subequations}
	Since~$(0,0) \in \mathbb{R}^{1+m}$ satisfies \eqref{dual:constr} to \eqref{dual:u}, the feasible region of~($D^i$) is non-empty. We show that the objective function of~($D^i$) is bounded from below. Let~$(u',v') \in \mathbb{R}^{1+m}$ be a feasible solution of~($D^i$). Let~$A' \in \arg\min \left\{\sum_{j\in A} v'_j: A\in \mathcal{J}^\alpha \right\}$ and let~$x^\prime_{i} = (x'_{ij})_{j\in J}$ with $x^\prime_{ij} = 1$ for all~$j\in A'$ and~$x^\prime_{ij} = 0$ for all $j\in J\setminus A'$. 
	Hence, $\sum_{j\in J} x^\prime_{ij} = \alpha$ and
	therefore $\sum_{j\in A'} v'_j = \sum_{j\in J} x^\prime_{ij} v'_j \leq \sum_{j\in J} \bar{x}_{ij} v'_j$ holds, because on the left and right hand-side of the inequality the weight~$\alpha$ is distributed to the smallest~$v'_j$ and any~$v'_j$, respectively.
	As a consequence, $0 \leq u' + \sum_{j\in A^\prime} v'_j \leq u' + \sum_{j\in J} \bar{x}_{ij}v'_j$ holds because of~\eqref{dual:constr} 
	and thus, the objective function value of $(u',v')$ for~($D^i$) is greater or equal to 0.
	Hence, the objective function of~($D^i$) is indeed bounded from below by 0 and thus ($D^i$) has an optimal solution, which guarantees the existence of a feasible solution of~($P^i$) by strong duality. This proves that~$P_i$ is non-empty.  	
	
	Because~$i$ was arbitrary, the polytope~$P_i$ is non-empty for all~$i\in I$. Therefore, 
	we can construct a feasible solution~$(\tilde{x}, \tilde{y}, \tilde{y})$ of~\myref{F3-V-R} as follows. Let~$\tilde{x} = (\tilde{x}_i)_{i\in I}$ with~$\tilde{x}_i \in P_i$ for all~$i\in I$,~$\tilde{y} = \bar{y}$ and~$\tilde{z} = \bar{z}$. It is easy to see that \eqref{pac3:sumy}, \eqref{pac3:sumx}, \eqref{pac3r:xreal} and \eqref{pac3r:yreal} are satisfied. 
	Additionally,~\eqref{pac3:valid_1} hold since~$\sum_{\substack{A\in \mathcal{J}^\alpha\\ j\in A}} \tilde{x}_{iA} \leq \bar{x}_{ij}$ and~$\bar{x}_{ij} \leq \bar{y}_j = \tilde{y}_j$ for all~$i \in I$ and~$j\in J$ by~\eqref{poly2} and~\eqref{pac1:xy}, respectively.
	Constraints~\eqref{pac3:sumdx} follow from~\eqref{poly2} and~\eqref{pac1:sumdx} as
	\[
	\sum_{A\in \mathcal{J}^\alpha} d_{iA} \tilde{x}_{iA} = \sum_{j\in J} d_{ij} \left( \sum_{\substack{A\in \mathcal{J}^\alpha\\ j\in A}} \tilde{x}_{iA} \right) \leq \sum_{j\in J} d_{ij} \bar{x}_{ij} \leq \bar{z}=\tilde{z}\]
	for all~$i\in I$.
	Since~$\tilde{z}=\bar{z}=\nu\myref{F1-R}$, we have~$\nu\myref{F3-V-R} \leq \nu\myref{F1-R}$, which finishes the proof.
\end{proof}

The following proposition summarizes the proven relationships between our LP-relaxations. 

\begin{proposition}\label{prop:d1-R=d3-v-R}
	$\nu \myref{F1-R} = \nu \myref{F2-R} = \nu \myref{F3-V-R} \geq \nu \myref{F3-R}$.
\end{proposition}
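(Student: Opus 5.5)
The proposition collects the preceding lemmata, so the plan is to chain them and add one easy comparison. The equality $\nu\myref{F1-R} = \nu\myref{F2-R}$ is exactly Lemma~\ref{lem:d1=d2}. The equality $\nu\myref{F1-R} = \nu\myref{F3-V-R}$ follows by combining Lemma~\ref{lem:d1<=d3v} and Lemma~\ref{lem:d1>=d3-v}.

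It remains to show $\nu\myref{F3-V-R} \geq \nu\myref{F3-R}$. For this I would argue that the feasible region of \myref{F3-V-R} is contained in that of \myref{F3-R}. Both relaxations share \eqref{pac3:sumy}, \eqref{pac3:sumx}, \eqref{pac3:sumdx}, \eqref{pac3r:xreal} and \eqref{pac3r:yreal}. The only difference is that \myref{F3-V-R} uses \eqref{pac3:valid_1} in place of \eqref{pac3:xy}.

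So take a point feasible for \myref{F3-V-R}, and fix $i\in I$, $A\in\mathcal{J}^\alpha$ and $j\in A$. By nonnegativity \eqref{pac3r:xreal}, the single term $x_{iA}$ is at most the sum $\sum_{A'\in\mathcal{J}^\alpha: j\in A'} x_{iA'}$. By \eqref{pac3:valid_1} that sum is at most $y_j$. Hence \eqref{pac3:xy} holds, the point is feasible for \myref{F3-R}, and minimizing $z$ over the larger set gives the smaller or equal value.

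There is no real obstacle here. The substantive work, namely constructing a feasible \myref{F3-V-R} point from an \myref{F1-R} point via the duality argument on $P_i$, was already done in Lemma~\ref{lem:d1>=d3-v}. Example~\ref{ex:d1vsd3} shows the last inequality can be strict, which is worth remarking but is not needed for the proof.
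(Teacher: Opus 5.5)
Your proposal is correct and takes the same approach as the paper. The equalities come from Lemmata~\ref{lem:d1=d2}, \ref{lem:d1<=d3v} and~\ref{lem:d1>=d3-v}, and the inequality comes from the feasible region of (F3-V-R) being contained in that of (F3-R); you justify that containment in more detail than the paper, by noting that nonnegativity together with~\eqref{pac3:valid_1} implies~\eqref{pac3:xy}.
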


\begin{proof}
	Since the feasible region of~\myref{F3-V-R} is a subset of the feasible region of~\myref{F3-R}, we get~$\nu \myref{F3-V-R} \geq \nu \myref{F3-R}$. The remaining equalities follow from the Lemmata~\ref{lem:d1=d2},~\ref{lem:d1<=d3v} and~\ref{lem:d1>=d3-v}.
\end{proof}

Note, that in Example~\ref{ex:d1vsd3} we provided an instance for which $\nu\myref{F1-R} > \nu\myref{F3-R}$ holds.
Therefore, 
adding the valid inequalities~\eqref{pac3:valid_1} to~\myref{F3} as it is done to obtain~\myref{F3-V} in general really strengthens its LP-relaxation.  

We conclude this section by studying the semi-relaxations in which only the~$x$-variables are relaxed, whereas the~$y$-variables remain binary. For any MIP formulation~$(F)\in \{\myref{F1}, \myref{F2}, \myref{F3},\myref{F3-V}\}$ we denote this semi-relaxation by ($F$-Rx). For the \pCP, \citet{gaar2022scaleable} have shown that the bound obtained by solving the corresponding semi-relaxation of the classical MIP formulation 
already yields the optimal objective function value of the \pCP. 
The following proposition extends this result to our MIP formulations of the~\paCP.

\begin{proposition}\label{prop:semi_relax}
	For each~$(F) \in \{\myref{F1}, \myref{F2}, \myref{F3}, \myref{F3-V}\}$, we have
	$\nu(F$-Rx$) = \nu (F)$.
\end{proposition}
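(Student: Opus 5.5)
The inequality $\nu(F\text{-Rx}) \leq \nu(F)$ is immediate for every $(F)$, since the semi-relaxation only enlarges the feasible region. The work is therefore in the reverse inequality $\nu(F\text{-Rx}) \geq \nu(F)$. I would prove it by fixing an arbitrary feasible solution $(\bar x,\bar y,\bar z)$ of $(F\text{-Rx})$ with $\bar y$ binary, and showing that the set $P=\{j\in J:\bar y_j=1\}$, which has $|P|=p$, satisfies $f_\alpha(P)\leq \bar z$. Once this holds, the integral solution of $(F)$ that opens $P$ and assigns each customer to its $\alpha$ closest open facilities has objective value $f_\alpha(P)\leq\bar z$, which gives the claim.

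\emph{Case} \myref{F1}. Fix a customer $i$. Constraints \eqref{pac1:xy} and $\bar y_j=0$ force $\bar x_{ij}=0$ for $j\notin P$. For $j\in P$ we have $0\le\bar x_{ij}\le 1$, and \eqref{pac1:sumx} gives $\sum_{j\in P}\bar x_{ij}=\alpha$. The minimum of $\sum_{j\in P} d_{ij}x_{ij}$ over $\{x\in[0,1]^P:\sum_j x_j=\alpha\}$ is attained at an integral vertex, namely the indicator of the $\alpha$ smallest distances; this is the same fractional-knapsack argument used in the proof of Lemma~\ref{lem:d1>=d3-v}. Hence $d_\alpha(P,i)\le \sum_j d_{ij}\bar x_{ij}\le\bar z$ by \eqref{pac1:sumdx}. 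Taking the maximum over $i$ yields $f_\alpha(P)\le\bar z$.

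\emph{Remaining formulations.} I would reduce these to \myref{F1} using the maps already built in the polyhedral study, checking that they keep $y$ unchanged and hence binary.
\begin{itemize}
\item For \myref{F2}, the map $\bar x=\sum_\beta \hat x^{(\beta)}$ from the proof of Lemma~\ref{lem:d1=d2} turns a solution of \myref{F2}-Rx into a solution of \myref{F1}-Rx with the same $y$ and $z$. Constraint \eqref{pac2:comparex} is simply dropped, which only relaxes.
\item For \myref{F3-V}, the map of Lemma~\ref{lem:d1<=d3v} does the same.
\item For \myref{F3}, the aggregated inequalities \eqref{pac3:valid_1} are not available, so I would argue directly. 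When $y$ is binary, \eqref{pac3:xy} forces $x_{iA}=0$ whenever $A\not\subseteq P$. Then $\sum_A d_{iA}x_{iA}$ is a convex combination of the values $d_{iA}$ with $A\subseteq P$, $|A|=\alpha$, and is therefore at least $\min_A d_{iA}=d_\alpha(P,i)$.
\end{itemize}

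I do not expect a real obstacle. The only points needing care are that binarity of $y$ is exactly what kills every variable outside $P$, which is precisely where the full LP relaxation fails, as Example~\ref{ex:d1vsd3} shows, and the integrality of the vertex in the \myref{F1} case. For the \myref{F2} direction of equality, the integral completion of $P$ is feasible for \myref{F2} once the closest facilities are ordered by distance, so that \eqref{pac2:comparex} holds.
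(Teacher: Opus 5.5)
Your proof is correct and uses essentially the same idea as the paper's: keep the binary $y$, replace each customer's fractional assignment by the incidence vector of its $\alpha$ cheapest admissible facilities, and use that a vector with entries in $[0,1]$ summing to $\alpha$ costs at least the sum of the $\alpha$ smallest distances. The differences are cosmetic. The paper rounds to the cheapest $\alpha$-subset of $\supp(x^*_i)$ rather than of $P$, and dismisses (F2), (F3) and (F3-V) as analogous, whereas you treat them explicitly via the maps from Lemmas~\ref{lem:d1=d2} and~\ref{lem:d1<=d3v} and a direct convex-combination argument for (F3).
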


\begin{proof}
	The feasible region of ($F$) is a subset of the feasible region of ($F$-Rx), so we have~$\nu(F$-Rx$) \leq \nu (F)$ for all~$(F) \in \{\myref{F1}, \myref{F2}, \myref{F3},\myref{F3-V}\}$, as
	the \paCP is a minimization problem.
	
	Next, we show~$\nu \myref{F1} \leq \nu(\text{F1-Rx})$. 
	Let~$(x^*,y^*,z^*)$ be an optimal solution of (F1-Rx), so~$\nu(\text{F1-Rx})= z^*$. For all~$i\in \cus$, define~ the support $\supp(x_i^*) = \{j\in \loc : x^*_{ij} > 0\}$. 
	Note, that~$|\supp(x_i^*)| \geq \alpha$ by~\eqref{pac1:sumy}. Hence, we can choose~$A_i \in \arg \min \{d_{iA}: A\in \mathcal{J}^\alpha \text{ with } A \subseteq \supp(x_i^*) \}$.
	Then, set~$\bar{x}_{ij} = 1$ for all~$j\in A_i$ and~$\bar{x}_{ij}=0$ otherwise.
	Together with~$\bar{y}= y^*$ and~$\bar{z}=z^*$, it is easy to see that~$(\bar{x}, \bar{y}, \bar{z})$ is a feasible solution of \myref{F1} with objective function value~$\bar{z} = z^*$. 
	Hence,~$\nu \myref{F1} \leq \nu (\text{F1-Rx})$.
	
	By analogous arguments, $\nu(F) \leq \nu (F\text{-Rx})$ holds for all~$(F)\in \{\myref{F2}, \myref{F3}, \myref{F3-V}\}\}$ as well. 
\end{proof}

Proposition~\ref{prop:semi_relax} thus shows that relaxing the $x$-variables in the MIP formulations~\myref{F1},~\myref{F2},~\myref{F3} and~\myref{F3-V} does not change their optimal objective function values.

\section{Lifting of formulation (F3)}\label{sec:lifting_D3}

In this section, we extend the iterative lifting approach of 
\citet{gaar2022scaleable} for the \pCP to the \paCP. In particular, we present a 
set of valid inequalities 
for the MIP formulations~\myref{F3} and~\myref{F3-V} that utilizes lower bounds on $\nu(\text{\paCP})$,
and propose two iterative procedures to improve lower bounds based on these valid 
inequalities. We prove convergence results for both procedures. 
Section~\ref{sec:liftingD3basic} describes the valid inequalities and an 
iterative procedure that is based on the LP-relaxation~\myref{F3-R}, whereas in 
Section~\ref{sec:liftingD3strong}, an iterative procedure based on the stronger 
LP-relaxation~\myref{F3-V-R} is introduced.

\subsection{Lifting of basic LP-relaxation}\label{sec:liftingD3basic}

Both iterative lifting procedures are based on the following valid 
inequalities.

\begin{lemma}\label{lem:d3_lift_ineq}
	Let~$LB$ be a lower bound on $\nu(\text{\paCP})$. Then, 
	\begin{equation}\label{pac3:lift_ineq}
		\sum_{A\in \mathcal{J}^\alpha} \max\{LB, d_{iA}\} x_{iA} \leq z \qquad 
		\forall i\in I 
	\end{equation}
	\emph{lifted 
	inequalities}.
\end{lemma}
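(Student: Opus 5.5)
The inequalities~\eqref{pac3:lift_ineq} should be read as \emph{valid} for~\myref{F3} and~\myref{F3-V} in the sense that every optimal solution satisfies them (equivalently, adding them does not change the optimal objective function value). They are not valid in the strict sense for every feasible point, since a feasible solution may have $z$ equal to the largest $\alpha$-distance, which can lie below $LB$ only if $LB$ were not a lower bound for that particular point. Because of this subtlety I will show the stronger, clean statement: for any feasible solution $(\tilde{x},\tilde{y},\tilde{z})$ of~\myref{F3} (or~\myref{F3-V}) with $\tilde{z}$ minimal for the given $\tilde{y}$, and in particular for every optimal solution, each inequality~\eqref{pac3:lift_ineq} holds. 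I will also note that for an arbitrary feasible solution one can always raise $z$ to $\max\{z,LB\}$ without affecting optimality.

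Fix a feasible solution $(\tilde{x},\tilde{y},\tilde{z})$ of~\myref{F3} and a customer $i\in I$. By~\eqref{pac3:sumx} and~\eqref{pac3:xbin}, there is exactly one $A_i\in\mathcal{J}^\alpha$ with $\tilde{x}_{iA_i}=1$, and all other $\tilde{x}_{iA}$ are $0$. Hence the left-hand side of~\eqref{pac3:lift_ineq} collapses to $\max\{LB,d_{iA_i}\}$, and it suffices to show $d_{iA_i}\le\tilde{z}$ and $LB\le\tilde{z}$.

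The first inequality is exactly~\eqref{pac3:sumdx}. For the second, note that $P=\{j:\tilde{y}_j=1\}$ is a feasible solution of the \paCP by~\eqref{pac3:sumy}, and by~\eqref{pac3:xy} each $A_i\subseteq P$. Therefore $d_\alpha(P,i)\le d_{iA_i}\le\tilde{z}$ for all $i$, so $\tilde{z}\ge f_\alpha(P)\ge\nu(\text{\paCP})\ge LB$.

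The main point to get right is this last step: the bound $\tilde{z}\ge LB$ must hold for every feasible solution, not only for optimal ones. That is exactly what the chain through $f_\alpha(P)$ provides, and it uses only the integrality of $y$. The same argument applies verbatim to~\myref{F3-V}, since its feasible set is contained in that of~\myref{F3}, and the facts used there (that $\sum_A x_{iA}=1$, that $A_i\subseteq P$ via~\eqref{pac3:valid_1}, and~\eqref{pac3:sumdx}) all still hold. So the inequalities are in fact valid for all feasible solutions of both formulations.
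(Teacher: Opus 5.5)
Your argument is correct and is essentially the paper's proof. You take the unique $A_i$ with $\tilde x_{iA_i}=1$, collapse the left-hand side to $\max\{LB,d_{iA_i}\}$, bound $d_{iA_i}$ by \eqref{pac3:sumdx}, and bound $LB$ by $\tilde z$; your construction of $P=\{j:\tilde y_j=1\}$ only makes explicit the step $\tilde z\ge f_\alpha(P)\ge\nu(\text{\paCP})\ge LB$ that the paper leaves implicit.

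Delete your opening paragraph, though. It claims the inequalities fail for some feasible points and calls the restriction to solutions with minimal $\tilde z$ a ``stronger'' statement. Both claims are false, and your own argument shows the inequalities hold for every feasible solution of (F3) and (F3-V).
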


\begin{proof}
	Let~$(\tilde{x},\tilde{y},\tilde{z})$ be a feasible solution of \myref{F3} 
	and let~$i\in I$.  
	Because of \eqref{pac3:sumx} and \eqref{pac3:xbin} there exists some~$A_i \in \mathcal{J}^\alpha$ such that~$\tilde{x}_{iA_i} = 1$ and~$\tilde{x}_{iA} = 0~$ for all~$A\in \mathcal{J}^\alpha \setminus\{A_i\}$. Therefore,~$\sum_{A\in \mathcal{J}^\alpha} \max\{LB, d_{iA}\} \tilde{x}_{iA} = \max\{LB, d_{iA_i}\}$. We have~$d_{iA_i} \leq \tilde{z}$ by~\eqref{pac3:sumdx} and since~$LB$ is a lower bound on $\nu(\text{\paCP})$, we also have~$LB \leq \tilde{z}$. Hence,~$\sum_{A\in \mathcal{J}^\alpha} \max\{LB, d_{iA}\} \tilde{x}_{iA} = \max\{LB, d_{iA_i}\} \leq \tilde{z}$, so \eqref{pac3:lift_ineq} are valid inequalities for \myref{F3}. The proof for~\myref{F3-V} can be done analogously.
\end{proof}

We use the lifted inequalities~\eqref{pac3:lift_ineq} to obtain a new, 
possibly stronger, lower bound on $\nu(\text{\paCP})$ based on~\myref{F3} in the following way. 

\begin{proposition}\label{prop:new_d3_r}
	Let~$LB$ be a lower bound on $\nu(\text{\paCP})$ and let 
	\begin{subequations}
		\begin{alignat}{3}
			&\mathrm{\mytag{LF3}}(LB) \qquad
			&\mathcal{L}_3(LB) = \min \quad \obj \phantom{iiii} \label{pac3lb:z}  \\ 
			&& \st~ \sum_{j \in \loc} y_j &= p \label{pac3lb:sumy} \\       
			&& \sum_{A\in \mathcal{J}^\alpha} x_{iA} & =  1 && \forall i \in 
			\cus\label{pac3lb:sumx} \\  
			&& x_{iA} & \leq y_j  && \forall i\in I, \forall A\in \mathcal{J}^\alpha, \forall j\in A \label{pac3lb:xy}\\
			&& \sum_{A\in \mathcal{J}^\alpha} \max\{LB, d_{iA}\} x_{iA} & \leq \obj && \forall i \in \cus 
			\label{pac3lb:l_opt}\\
			&& x_{iA} &\geq 0 \qquad&& \forall i \in \cus, \forall A\in \mathcal{J}^\alpha
			\label{pac3lb:xbin}\\
			&& y_{j} &\leq 1 && \forall j \in \loc  \label{pac3lb:ybin}\\
			&& \obj & \in \mathbb{R}.
		\end{alignat}
	\end{subequations}
	Then,~$\mathcal{L}_3(LB)$ is a lower bound on $\nu(\text{\paCP})$ and~$\mathcal{L}_3(LB) \geq LB$ holds.
\end{proposition}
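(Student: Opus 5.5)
The plan is to establish the two claims separately. Both follow quickly from the observation that \myref{LF3}$(LB)$ is a relaxation of \myref{F3} strengthened by the lifted inequalities~\eqref{pac3:lift_ineq}.

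\emph{First, $\mathcal{L}_3(LB)$ is a lower bound on $\nu(\text{\paCP})$.} Take an optimal solution $(x^*,y^*,z^*)$ of \myref{F3}, so that $z^*=\nu(\text{\paCP})$ by the correctness of \myref{F3} argued in Section~\ref{sec:third_MIP}. By Lemma~\ref{lem:d3_lift_ineq}, this solution satisfies~\eqref{pac3:lift_ineq}, i.e., constraints~\eqref{pac3lb:l_opt}. It also satisfies~\eqref{pac3lb:sumy}, \eqref{pac3lb:sumx} and~\eqref{pac3lb:xy}, since these coincide with~\eqref{pac3:sumy}, \eqref{pac3:sumx} and~\eqref{pac3:xy}. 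Finally, binary $x$ and $y$ satisfy~\eqref{pac3lb:xbin} and~\eqref{pac3lb:ybin}. Hence $(x^*,y^*,z^*)$ is feasible for \myref{LF3}$(LB)$, which gives $\mathcal{L}_3(LB)\le z^*=\nu(\text{\paCP})$. Feasibility of \myref{LF3}$(LB)$ is thereby also established. Boundedness from below follows from the next step, so the minimum is attained and $\mathcal{L}_3(LB)$ is well defined as an LP optimum.

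\emph{Second, $\mathcal{L}_3(LB)\ge LB$.} Let $(x,y,z)$ be any feasible solution of \myref{LF3}$(LB)$ and fix some $i\in I$ (we assume $I\neq\emptyset$). Using $\max\{LB,d_{iA}\}\ge LB$, the nonnegativity~\eqref{pac3lb:xbin} and then~\eqref{pac3lb:sumx}, we chain
\[
z\;\ge\;\sum_{A\in\mathcal{J}^\alpha}\max\{LB,d_{iA}\}\,x_{iA}\;\ge\;LB\sum_{A\in\mathcal{J}^\alpha}x_{iA}\;=\;LB.
\]
Minimizing over all feasible solutions then yields $\mathcal{L}_3(LB)\ge LB$.

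There is no real obstacle here. The only care needed is in the second step, where nonnegativity of $x$ is essential: the inequality $\max\{LB,d_{iA}\}x_{iA}\ge LB\,x_{iA}$ requires $x_{iA}\ge 0$, and this holds regardless of the sign of $LB$.
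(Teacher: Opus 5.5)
Your proof is correct and follows essentially the same route as the paper. For the first claim, you check directly that an optimal solution of (F3) is feasible for (LF3)($LB$), while the paper views (LF3)($LB$) as (F3-R) plus the valid lifted inequalities; both rest on the same idea. The second claim is the identical chain $z \ge \sum_{A} \max\{LB,d_{iA}\}x_{iA} \ge LB\sum_{A} x_{iA} = LB$, and your remarks on attainment of the LP minimum and on the role of $x_{iA}\ge 0$ are sound additions.
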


\begin{proof}
	Formulation~\myref{LF3}($LB$) can be obtained by adding the lifted 
	inequalities~\eqref{pac3:lift_ineq} and dropping~\eqref{pac3:sumdx} for each 
	$i\in I$ to the LP-relaxation \myref{F3-R} of \myref{F3}. Since 
	\eqref{pac3:lift_ineq} supersede constraints~\eqref{pac3:sumdx}, we can omit 
	the latter without changing the optimal objective function value. By 
	Lemma~\ref{lem:d3_lift_ineq}, we then see that~$\mathcal{L}_3(LB)$ is a lower 
	bound on $\nu(\text{\paCP})$.	
	Furthermore, we observe that~$\sum_{A\in \mathcal{J}^\alpha} x_{iA} = 1$ implies~$\sum_{A\in\mathcal{J}^\alpha} \max\{LB, d_{iA}\} x_{iA} \geq \sum_{A\in \mathcal{J}^\alpha} LB x_{iA} = LB$ for all~$i\in I$. Thus, every feasible solution~$(\tilde{x},\tilde{y},\tilde{z})$ of \myref{LF3}($LB$) satisfies~$\tilde{z} \geq LB$, which implies~$\mathcal{L}_3(LB) \geq LB$.
\end{proof}

Given a lower bound~$LB$ on $\nu(\text{\paCP})$, we can solve~\myref{LF3}($LB$) to obtain a new lower bound that is at least as good as $LB$. 
The following theorem gives a characterization of lower bounds~$LB$ that can not be improved anymore by solving \myref{LF3}($LB$).

\begin{theorem}\label{theo:d3_lift_conv}
	Let~$LB$ be a lower bound on $\nu(\text{\paCP})$. Then~$\mathcal{L}_3(LB)=LB$ holds if and only if there is a feasible solution for the \emph{fractional $\alpha$ set cover problem}
		\begin{subequations}
		\begin{alignat}{3}
			&\mathrm{\mytag{FASC}}(LB) \qquad
			& \min \quad \sum_{j\in J} y_j \phantom{iiii} \label{faclb:z}  \\ 
			&& \st~  \sum_{\substack{A\in \mathcal{J}^\alpha\\d_{iA}\leq LB}} u_A & \geq 1 \qquad&& \forall i\in \cus \label{faclb:sumu}\\
			&& u_A &\leq y_j &&  \forall A\in \mathcal{J}^\alpha, \forall j\in A \label{faclb:u_a}\\
			&& u_A &\geq 0 &&  \forall A\in \mathcal{J}^\alpha \label{faclb:ubin} \\
			&&  y_{j} &\leq 1 && \forall j \in \loc  \label{faclb:ybin}
		\end{alignat}
	\end{subequations}
	with objective function value at most~$p$.
\end{theorem}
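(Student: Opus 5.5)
We prove the two directions separately, working directly with feasible solutions of \myref{LF3}($LB$) and \myref{FASC}($LB$). Recall from Proposition~\ref{prop:new_d3_r} that $\mathcal{L}_3(LB)\geq LB$ always holds. Hence $\mathcal{L}_3(LB)=LB$ is equivalent to the existence of a feasible solution of \myref{LF3}($LB$) with $z\le LB$. Such a solution exists because \myref{LF3}($LB$) is a feasible, bounded LP, so its minimum is attained.

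\emph{($\Leftarrow$)} Let $(u,y)$ be feasible for \myref{FASC}($LB$) with $\sum_j y_j\le p$. We first rescale the cover so that it becomes an exact assignment. For each $i\in I$, let $S_i=\sum_{A:\,d_{iA}\le LB}u_A\ge 1$, and set $x_{iA}=u_A/S_i$ if $d_{iA}\le LB$ and $x_{iA}=0$ otherwise. Then $\sum_A x_{iA}=1$ and $x_{iA}\le u_A\le y_j$ for all $j\in A$. Moreover, the constraint \eqref{pac3lb:l_opt} holds with $z=LB$, because every $A$ in the support satisfies $\max\{LB,d_{iA}\}=LB$.

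It remains to raise $\sum_j y_j$ to exactly $p$ while keeping $y\le 1$. Since $p<m$, we can increase the entries of $y$ one by one, never decreasing any of them. This keeps \eqref{pac3lb:xy} valid. The result is a feasible solution of \myref{LF3}($LB$) with $z=LB$, so $\mathcal{L}_3(LB)\le LB$, and together with Proposition~\ref{prop:new_d3_r} we get equality.

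\emph{($\Rightarrow$)} Take an optimal solution $(x,y,z)$ of \myref{LF3}($LB$) with $z=LB$. For each $i$, combining \eqref{pac3lb:l_opt} with $\sum_A x_{iA}=1$ gives
\[
LB \ \ge\ \sum_A \max\{LB,d_{iA}\}x_{iA}\ \ge\ LB .
\]
So equality holds throughout, which forces $x_{iA}=0$ whenever $d_{iA}>LB$; this is the key step. It follows that for every $i$ we have $\sum_{A:\,d_{iA}\le LB}x_{iA}=1$.

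We now have to produce a \emph{single} vector $u$, independent of $i$, as \myref{FASC}($LB$) requires. We define $u_A=\max_{i\in I}x_{iA}$. Then
\[
\sum_{A:\,d_{iA}\le LB}u_A\ \ge\ \sum_{A:\,d_{iA}\le LB}x_{iA}=1
\]
for every $i$. Also $u_A\le y_j$ for all $j\in A$, because each $x_{iA}\le y_j$ by \eqref{pac3lb:xy}. Finally $u\ge 0$, $y\le 1$ and $\sum_j y_j=p$, so $(u,y)$ is feasible for \myref{FASC}($LB$) with objective value $p$.

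The main obstacle is this decoupling of the per-customer variables $x_{iA}$ into a common $u_A$. It works here precisely because \myref{F3} links $x_{iA}$ to $y$ only through the individual bounds \eqref{pac3lb:xy}, not through aggregated constraints such as \eqref{pac3:valid_1}. For \myref{F3-V} the same maximum trick would fail, which explains why a different characterization is needed there.
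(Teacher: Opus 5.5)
Your proof is correct and has the same overall structure as the paper's. In the forward direction you use the same key step: equality in $LB\ge\sum_A\max\{LB,d_{iA}\}x_{iA}\ge LB$ forces $x_{iA}=0$ whenever $d_{iA}>LB$. In the reverse direction you likewise pad $y$ up to $\sum_j y_j=p$.

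The differences lie in the two explicit constructions.
\begin{itemize}
\item \textbf{The common vector $u$.} The paper takes $\tilde u_A=\min\{\tilde y_j: j\in A\}$, which depends only on $y$ and is the largest $u$ compatible with it. You take $u_A=\max_{i\in I}x_{iA}$, the smallest $u$ dominating every $x_i$. Both work for the same reason: (F3) links $x_{iA}$ to $y$ only set by set.
\item \textbf{Building $x$ from a cover.} The paper sorts the sets by $d_{iA}$, picks a threshold set $A_i$, and fills greedily, applying a scaling factor $\sigma_i$ to the ties at the threshold. This yields a closest-assignment style fractional solution, which in fact minimizes $\sum_A d_{iA}x_{iA}$ among all $x_i$ with $\sum_A x_{iA}=1$ and $0\le x_{iA}\le\tilde u_A$. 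That extra structure is never used for this statement. Your normalization $x_{iA}=u_A/S_i$ on $\{A: d_{iA}\le LB\}$ gives feasibility and $\sum_A\max\{LB,d_{iA}\}x_{iA}=LB$ directly, with no tie-handling. This suffices because (LF3)($LB$) only requires the support of $x_i$ to lie within radius $LB$.
\end{itemize}
So your route is the shorter one for the theorem as stated.

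Your closing remark is also accurate. The max-over-customers trick fails for (F3-V) because $\sum_{A\ni j}\max_i x_{iA}$ need not be bounded by $y_j$, which is why (FASC-V) keeps customer-indexed variables $u_{iA}$.
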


\begin{proof}
	First, assume that~$\mathcal{L}_3(LB)=LB$ holds. Then, there is an optimal solution~$(\tilde{x}, \tilde{y}, \tilde{z})$ of \myref{LF3}($LB$) with~$\tilde{z}=LB$. Let~$\tilde{u} = (\tilde{u}_A)_{A\in \mathcal{J}^\alpha}$ with $\tilde{u}_A = \min\{\tilde{y}_j: j\in A\}$ for all~$A\in \mathcal{J}^\alpha$. We show that $(\tilde{y}, \tilde{u})$ is a feasible solution of~\myref{FASC}($LB$) with objective function value at most $p$.
	Clearly, constraints~\eqref{faclb:u_a},~\eqref{faclb:ubin} and~\eqref{faclb:ybin} hold for $\tilde{y}$ and~$\tilde{u}$ by definition.
	Due to \eqref{pac3lb:l_opt} and \eqref{pac3lb:sumx}, we have
	\begin{align*}
		LB &= \tilde{z} \geq \sum_{A\in \mathcal{J}^\alpha} \max\{LB,d_{iA}\} \tilde{x}_{iA} \geq \sum_{A\in \mathcal{J}^\alpha} LB \tilde{x}_{iA} = LB
	\end{align*}
	for all~$i\in I$.
	Therefore, 
	\[
		LB = \sum_{A\in \mathcal{J}^\alpha} \max\{LB,d_{iA}\} \tilde{x}_{iA} 
		= \sum_{\substack{A\in \mathcal{J}^\alpha\\d_{iA}\leq LB}} LB \tilde{x}_{iA} + \sum_{\substack{A\in \mathcal{J}^\alpha\\d_{iA}> LB}} d_{iA} \tilde{x}_{iA}
	\]
	holds for all~$i\in I$, which together with \eqref{pac3lb:sumx} and   \eqref{pac3lb:xbin}	 
	implies~$\tilde{x}_{iA} = 0$ for all~$i\in I$ and~$A\in \mathcal{J}^\alpha$ 
	 with~$d_{iA} > LB$. 
	Because of this, \eqref{pac3lb:sumx} and~\eqref{pac3lb:xy}, we also have 
	\[
		1 = \sum_{A\in \mathcal{J}^\alpha} \tilde{x}_{iA} = \sum_{\substack{A\in \mathcal{J}^\alpha\\d_{iA}\leq LB}} \tilde{x}_{iA} \leq \sum_{\substack{A\in \mathcal{J}^\alpha\\d_{iA}\leq LB}} \min\{\tilde{y}_j:~j\in A\} = \sum_{\substack{A\in \mathcal{J}^\alpha\\d_{iA}\leq LB}} \tilde{u}_A
	\]
	for all~$i\in I$.
	Thus,~$\tilde{u}$ satisfies \eqref{faclb:sumu} as well. 
	By \eqref{pac3lb:sumy}, we also have~$\sum_{j\in J} \tilde{y}_j = p$. 
	Therefore,~$(\tilde{y}, \tilde{u})$ is a feasible solution of \myref{FASC}($LB$) with objective function value~$p$, which finishes the proof of the first direction of the equivalence.
	
	For the other direction, let~$(y^*, u^*)$ be a feasible solution of 
	\myref{FASC}($LB$) with objective function value at most~$p$.	 
	If its objective function value is less than $p$,
	we construct a new feasible solution~$(\tilde{y}, \tilde{u})$ of \myref{FASC}($LB$) with objective function value $p$ by setting $\tilde{u} = u^*$ and increasing the values of~$y^*_j$ for some~$j\in J$ such that~$y^*_j \leq \tilde{y}_j \leq 1$ for all~$j \in J$ and~$\sum_{j\in J} \tilde{y}_j = p$. 
	Such~$\tilde{y}$ exists, because~$p\leq |\loc|$. 
	We note that~$\tilde{u}_A = u^*_A \leq y^*_j \leq \tilde{y}_j$ for all~$A\in \mathcal{J}^\alpha$ and~$j\in A$.
	Thus, in any case we can obtain a feasible solution~$(\tilde{y}, \tilde{u})$ of~\myref{FASC}($LB$) with objective function value~$p$.
	
	Next, we define~$\tilde{z}=LB$ and let~$i\in I$.
	Now, let 
	\[
		A_i \in \arg\min_{A^\prime\in \mathcal{J}^\alpha} \left\{d_{iA^\prime}: \sum_{\substack{A\in \mathcal{J}^\alpha\\d_{iA} <d_{iA^\prime}}} \tilde{u}_A < 1 \textrm{ and } \sum_{\substack{A\in \mathcal{J}^\alpha\\d_{iA}\leq d_{iA^\prime}}} \tilde{u}_A \geq 1 \right\}.
	\]
	Such~$A_i$ exists by \eqref{faclb:sumu} and we further know that~$d_{iA_i} \leq LB$. Let 
	\[
		\sigma_i = \frac{\left( 1 - \sum_{\substack{A\in \mathcal{J}^\alpha\\d_{iA}< d_{iA_i}}} \tilde{u}_A \right)}{\sum_{\substack{A\in \mathcal{J}^\alpha\\d_{iA}= d_{iA_i}}} \tilde{u}_A}.
	\]
	Then $0\leq \sigma_i$ because $\tilde{u} \geq 0$ and  $\sum_{\substack{A\in \mathcal{J}^\alpha\\d_{iA} <d_{iA^\prime}}} \tilde{u}_A < 1$.
	Since~$1 \leq \sum_{\substack{A\in \mathcal{J}^\alpha\\d_{iA}\leq d_{iA_i}}} \tilde{u}_A = \sum_{\substack{A\in \mathcal{J}^\alpha\\d_{iA}< d_{iA_i}}} \tilde{u}_A + \sum_{\substack{A\in \mathcal{J}^\alpha\\d_{iA}= d_{iA_i}}} \tilde{u}_A$
	implies 
	$1 - \sum_{\substack{A\in \mathcal{J}^\alpha\\d_{iA}< d_{iA_i}}} \tilde{u}_A \leq \sum_{\substack{A\in \mathcal{J}^\alpha\\d_{iA}= d_{iA_i}}} \tilde{u}_A$,
	we also have $\sigma_i\leq 1$.
	Next, we define
	\[
		\tilde{x}_{iA} = \left\{ \begin{array}{ll}
			\tilde{u}_{A} & \textrm{if } d_{iA} < d_{iA_i} \\
			\tilde{u}_{A} \sigma_i & \textrm{if } d_{iA} = d_{iA_i} \\
			0 & \textrm{if } d_{iA} > d_{iA_i} 
		\end{array}\right.
	\]
	for all~$A \in \mathcal{J}^\alpha$.
	Then,~$(\tilde{x}_{iA})_{A\in \mathcal{J}^\alpha}$ satisfies \eqref{pac3lb:sumx}, because
	\begin{align*}
		\sum_{A\in \mathcal{J}^\alpha} \tilde{x}_{iA} &=
		\sum_{\substack{A\in \mathcal{J}^\alpha\\d_{iA}<d_{iA_i}}} \tilde{u}_A + \sum_{\substack{A\in \mathcal{J}^\alpha\\d_{iA}=d_{iA_i}}} \tilde{u}_A \sigma_i
		= \sum_{\substack{A\in \mathcal{J}^\alpha\\d_{iA}<d_{iA_i}}} \tilde{u}_A + \sum_{\substack{A\in \mathcal{J}^\alpha\\d_{iA}=d_{iA_i}}} \tilde{u}_A \frac {\left( 1 - \sum_{\substack{A\in \mathcal{J}^\alpha\\d_{iA}<d_{iA_i}}} \tilde{u}_A \right)} {\sum_{\substack{A\in \mathcal{J}^\alpha\\d_{iA}= d_{iA_i}}} \tilde{u}_A} 
		= 1.
	\end{align*}
	
	Together with~\eqref{faclb:u_a}, we have~$\tilde{x}_{iA} \leq \tilde{u}_A \leq 
	\tilde{y}_j$ for all~$A\in \mathcal{J}^\alpha$ and~$j\in A$. 
	Hence,~$(\tilde{x}, \tilde{y})$ satisfies \eqref{pac3lb:xy}.
	Because of~\eqref{faclb:ybin} and~\eqref{faclb:ubin}, it is easy to see that \eqref{pac3lb:xbin} and \eqref{pac3lb:ybin} are satisfied as well. 
	Furthermore, we have 
	\begin{align*}
	\sum_{A\in \mathcal{J}^\alpha} \max\{LB, d_{iA}\} \tilde{x}_{iA} 
	&= \sum_{\substack{A\in \mathcal{J}^\alpha\\d_{iA}\leq LB}} LB 
	\tilde{x}_{iA} + \sum_{\substack{A\in \mathcal{J}^\alpha\\d_{iA}>LB}} 
	d_{iA} \tilde{x}_{iA} 
	= \sum_{A\in \mathcal{J}^\alpha} LB \tilde{x}_{iA} = LB = \tilde{z}
	\end{align*}
	where the second equality follows from~$\tilde{x}_{iA}=0$ for all~$A\in \mathcal{J}^\alpha$ with~$d_{iA} > d_{iA_i}$ and~$LB \geq d_{iA_i}$.
	Hence,~\eqref{pac3lb:l_opt} hold as well. 
	Constraints~\eqref{pac3lb:sumy} and~\eqref{pac3lb:ybin} are satisfied by our construction of~$\tilde{y}$. 
	Thus, as~$i\in I$ was arbitrary, constraints~\eqref{pac3lb:sumy} to~\eqref{pac3lb:xbin} hold for all~$i\in I$.
	Therefore,~$(\tilde{x}, \tilde{y}, \tilde{z})$ is a feasible solution of \myref{LF3}($LB$) with objective function value~$\tilde{z}=LB$, which implies~$\mathcal{L}_3(LB) \leq LB$. As~$\mathcal{L}_3(LB) \geq LB$ holds by Proposition~\ref{prop:new_d3_r}, we have~$\mathcal{L}_3(LB) = LB$, which finishes the proof.	
\end{proof}

Theorem~\ref{theo:d3_lift_conv} implies that if we have a lower bound~$LB$ with $\mathcal{L}_3(LB) = LB$, we know that $\mathcal{L}_3(LB') = LB'$ holds for all better bounds $LB'> LB$ as well, since a feasible solution of~\myref{FASC}($LB$) with objective function value at most $p$ is also a feasible solution of~\myref{FASC}($LB^\prime$) with greater radius $LB'> LB$. This motivates the definition $LB^\#_3 = \min \{LB\in \mathbb{R}: \mathcal{L}_3(LB) = LB\}$ of the smallest lower bound on $\nu(\text{\paCP})$ that does not improve by solving~\myref{LF3}($LB$).

This bound $LB^\#_3$ can be computed by the following iterative approach similar to that in \citet{gaar2022scaleable} for the~\pCP. Given an initial lower bound $LB$ on $\nu(\text{\paCP})$,  
we solve \myref{LF3}($LB$) and obtain a new lower bound~$LB' = \mathcal{L}_3(LB)$. If $LB' = LB$, we stop. Otherwise, we solve \myref{LF3}($LB'$) again to obtain a new potentially better lower bound. We repeat this until we reach the lower bound $LB^{\#}_3$ that does not improve by solving \myref{LF3}($LB^{\#}_3$). 
The following theorem gives a result on the runtime of this procedure.

\begin{theorem}\label{theo:polytime_LB3}
	For a fixed value of~$\alpha$, the lower bound~$LB^{\#}_3$ can be computed in polynomial time. 	
\end{theorem}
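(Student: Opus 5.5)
Since $\alpha$ is fixed, $|\mathcal{J}^\alpha| = \binom{m}{\alpha} = O(m^\alpha)$, so both \myref{LF3}($LB$) and \myref{FASC}($LB$) are linear programs with polynomially many variables and constraints in $n$ and $m$. Each can therefore be solved in polynomial time, for example by the ellipsoid or an interior point method, with input size polynomial in the encoding of the distances. Rather than bound the number of rounds of the iterative lifting procedure, which is hard to control directly, I will use Theorem~\ref{theo:d3_lift_conv} to find $LB^\#_3$ by a search over a finite, polynomially sized candidate set.

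First I show that $LB^\#_3 \in D^\alpha \cup \{LB_0\}$, where $LB_0$ is some trivial valid lower bound such as $\min D^\alpha$. It is cleanest to simply show $LB^\#_3 \in D^\alpha$. By Theorem~\ref{theo:d3_lift_conv}, $\mathcal{L}_3(LB)=LB$ holds exactly when \myref{FASC}($LB$) has value at most $p$. The feasible region of \myref{FASC}($LB$) depends on $LB$ only through the index sets $\{A: d_{iA}\le LB\}$. These sets are piecewise constant in $LB$ and change only when $LB$ crosses a value in $D^\alpha$. They are also monotone in $LB$, which is the observation after Theorem~\ref{theo:d3_lift_conv}. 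Hence the set of fixed points is an up-closed set of the form $[\delta,\infty)$ with $\delta \in D^\alpha$: it contains its infimum, because the condition at $\delta$ coincides with the condition slightly above $\delta$. The minimum in the definition of $LB^\#_3$ is therefore attained and equals the smallest $\delta \in D^\alpha$ for which \myref{FASC}($\delta$) has value at most $p$.

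One subtlety is that $LB^\#_3$ is defined over lower bounds on $\nu(\text{\paCP})$. I need the smallest such $\delta$ to be at most $\nu(\text{\paCP})$. This holds because any optimal integer solution $P$ yields a feasible \myref{FASC}($\nu$) solution of value $p$: take $y$ as the indicator of $P$ and $u_A=1$ for the assigned sets $A$. Also, $\nu(\text{\paCP}) \in D^\alpha$.

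The algorithm is then as follows. Sort $D^\alpha$, which has at most $n\binom{m}{\alpha}$ elements. For each candidate $\delta$, solve the LP \myref{FASC}($\delta$), or binary search over the sorted list using monotonicity, and return the smallest $\delta$ whose value is at most $p$. This takes polynomially many LP solves of polynomial size, so $LB^\#_3$ is computed in polynomial time. I expect the main obstacle to be stating cleanly that the fixed-point set is closed from below, i.e.\ that the minimum lies exactly in $D^\alpha$ rather than being approached only as an infimum. I handle this with the piecewise-constant argument above.
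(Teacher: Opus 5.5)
Your proof is correct. After a shared first step, it takes a different route from the paper's.

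Both arguments rest on $LB^{\#}_3 \in D^\alpha$, and both derive it from the same observation: (FASC)($LB$) depends on $LB$ only through the sets $\{A : d_{iA} \le LB\}$. The paper argues by contradiction with $d^{\max} = \max\{d \in D^\alpha : d \le LB^{\#}_3\}$ and simply presupposes that the minimum defining $LB^{\#}_3$ exists. Your right-continuity argument proves the minimum is attained. Your integer-optimum argument shows it is at most $\nu(\text{\paCP})$, so it is a genuine lower bound under either reading of the definition.

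The real difference is the algorithm. The paper runs the lifting iteration itself: starting at $\min D^\alpha$, it solves (LF3), rounds $\mathcal{L}_3$ up to the next element of $D^\alpha$, and bounds the number of rounds by $|D^\alpha|$. You never solve (LF3). Instead you test the criterion of Theorem~\ref{theo:d3_lift_conv} directly on the sorted candidates, by scan or binary search.

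Your route buys two things. First, a cleaner correctness argument. The paper's count tacitly needs that the rounded iterates never jump past $LB^{\#}_3$ to a larger fixed point. This follows from monotonicity of $\mathcal{L}_3$ in $LB$, since a larger $LB$ only shrinks the feasible region of (LF3), but the paper never states it. Second, less work: $O(\log(n m^\alpha))$ solves of an LP with $O(m^\alpha)$ variables, instead of up to $|D^\alpha|$ solves of (LF3), which has $O(n m^\alpha)$ variables. Your count $|D^\alpha| \le n\binom{m}{\alpha}$ is also the accurate one.

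The paper's route buys a statement about the procedure it actually describes. It shows that the iterative lifting scheme with rounding to the next $\alpha$-distance reaches $LB^{\#}_3$ after polynomially many rounds. Your search shows the number is computable in polynomial time, but says nothing about that scheme.
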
	

\begin{proof}
	We begin by showing $LB^{\#}_3 \in D^\alpha$. 
	Assume the contrary and let $d^{\max} = \max\{d\in D^\alpha: d \leq LB^{\#}_3\}$. Then, $d^{\max} < LB^{\#}_3$.
	Now, let $(\tilde{y}, \tilde{u})$ be an optimal solution of \myref{FASC}($LB^{\#}_3$). By Theorem~\ref{theo:d3_lift_conv}, we have $\sum_{j\in J} \tilde{y}_j \leq p$. Since $\sum_{\substack{A\in \mathcal{J}^\alpha\\ d_{iA} \leq LB^{\#}_3}} \tilde{u}_{iA} = \sum_{\substack{A\in \mathcal{J}^\alpha\\ d_{iA} \leq d^{\max}}} \tilde{u}_{iA}$ for all $i\in I$, we know that $(\tilde{y},\tilde{u})$ is a feasible solution of \myref{FASC}($d^{\max}$) with $\sum_{j\in J} \tilde{y}_j \leq p$ as well. Applying Theorem~\ref{theo:d3_lift_conv} again yields $\mathcal{L}_3(d^{\max}) = d^{\max}$. Since this contradicts the definition of~$LB^{\#}_3$ as the minimal number satisfying this equality, we know that $LB^{\#}_3 \in D^\alpha$ must hold. 
	
	Now, as $\nu(\text{\paCP})$ is always an $\alpha$-distance, 
	we can increase a given lower bound 
	to the next value in~$D^\alpha$ and still have a valid lower bound.
	Then, starting from the lower bound $d^{\min} = \min \{d\in D^\alpha\}$, we obtain a new lower bound $d' = \min \{ d\in D^\alpha: d\geq \mathcal{L}_3(d^{\min})\}$ in polynomial time by solving an LP with~$O(m^\alpha)$ variables and~$O(n+\alpha m^{\alpha})$~constraints and increasing the optimal objective function value to the next $\alpha$-distance. 
	As $LB^{\#}_3 \in D^\alpha$, we need at most $|D^\alpha|\leq m^\alpha$ such iterations.
	Thus, for a fixed value of~$\alpha$,~$LB^{\#}_3$ can be computed in polynomial time.  
\end{proof}

We observe that for $\alpha=1$, the value $LB^\#_3$ is a lower bound on $\nu(\text{\pCP})$. The best lower bound that \citet{gaar2022scaleable} obtained with their approach is the smallest number~$LB$
for which there is a feasible solution of the fractional set cover problem with radius~$LB$ that uses at most~$p$ sets.
Since for $\alpha = 1$ our problem~\myref{FASC}($LB$) is equivalent to the fractional set cover problem with radius~$LB$, in this case our bound~$LB_3^\#$ is exactly the best bound $LB$ identified by \citet{gaar2022scaleable}.

\subsection{Lifting of stronger LP-relaxation}\label{sec:liftingD3strong}

Next, we apply the ideas of the previous section to the LP-relaxation~\myref{F3-V-R} of~\myref{F3-V}.

\begin{proposition}\label{prop:lift_d3_v_r}
	Let~$LB$ be a lower bound on $\nu(\text{\paCP})$ and let 
	\begin{subequations}
		\begin{alignat}{3}
			&\textnormal{\mytag{LF3-V}}(LB) \qquad
			& \mathcal{L}_{3V}(LB) =   \min \quad \obj \phantom{iiii} \label{pac3vlb:z}  \\ 
			&& \st~   \sum_{j \in \loc} y_j &= p \label{pac3vlb:sumy} \\       
			&& \sum_{A\in \mathcal{J}^\alpha} x_{iA} & =  1 && \forall i \in \cus\label{pac3vlb:sumx} \\
			&& \sum_{\substack{A\in \mathcal{J}^\alpha \\ j\in A}} x_{iA} &\leq y_j && \forall i\in I, \forall j\in J \label{pac3vlb:valid}\\
			&& \sum_{A\in \mathcal{J}^\alpha} \max\{LB, d_{iA}\} x_{iA} & \leq \obj && \forall i \in \cus 
			\label{pac3vlb:l_opt}\\
			&& x_{iA} &\geq 0 \qquad&& \forall i \in \cus, \forall A\in \mathcal{J}^\alpha
			\label{pac3vlb:xbin}\\
			&& y_{j} &\leq 1 && \forall j \in \loc  \label{pac3vlb:ybin}\\
			&& \obj & \in \mathbb{R} \label{pac3vlb:zbin}.
		\end{alignat}
	\end{subequations}
	Then, $\mathcal{L}_{3V}(LB)$ is a lower bound on $\nu(\text{\paCP})$ and $\mathcal{L}_{3V}(LB) \geq LB$ holds.
\end{proposition}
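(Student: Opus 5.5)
The plan is to mirror the proof of Proposition~\ref{prop:new_d3_r}, with \myref{F3-V} replacing \myref{F3}. The argument has two parts. First, we show that \myref{LF3-V}($LB$) is a relaxation of a formulation whose optimal value is $\nu(\text{\paCP})$. Second, we show that every feasible point has objective value at least $LB$.

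For the first part, start from \myref{F3-V}, which is a correct MIP formulation of the \paCP with optimal value $\nu(\text{\paCP})$. By Lemma~\ref{lem:d3_lift_ineq}, whose proof was stated to carry over to \myref{F3-V}, the lifted inequalities~\eqref{pac3:lift_ineq} are valid for \myref{F3-V}. Adding them therefore keeps every integer feasible solution, and the optimal value stays $\nu(\text{\paCP})$.

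Next, observe that $\max\{LB,d_{iA}\}\ge d_{iA}$ and $x_{iA}\ge 0$. Hence~\eqref{pac3vlb:l_opt} implies~\eqref{pac3:sumdx}, and we may drop~\eqref{pac3:sumdx} without changing the feasible set. Finally, relax integrality. By~\eqref{pac3vlb:valid}, each $x_{iA}\le y_j$ for $j\in A$, so $x_{iA}\in[0,1]$ and $y_j\in[0,1]$ reduce to~\eqref{pac3vlb:xbin} and~\eqref{pac3vlb:ybin}. The result is exactly \myref{LF3-V}($LB$). Since this is a relaxation of a formulation with optimal value $\nu(\text{\paCP})$, we get $\mathcal{L}_{3V}(LB)\le \nu(\text{\paCP})$.

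For the second part, take any feasible $(\tilde x,\tilde y,\tilde z)$ of \myref{LF3-V}($LB$) and fix any $i\in I$. Combining~\eqref{pac3vlb:l_opt},~\eqref{pac3vlb:xbin} and~\eqref{pac3vlb:sumx} gives
\[
\tilde z\ \ge\ \sum_{A\in\mathcal{J}^\alpha}\max\{LB,d_{iA}\}\tilde x_{iA}\ \ge\ LB\sum_{A\in\mathcal{J}^\alpha}\tilde x_{iA}\ =\ LB .
\]
Since this holds for every feasible point, $\mathcal{L}_{3V}(LB)\ge LB$. The problem is feasible, since \myref{F3-V} has integer solutions, and bounded below, so the minimum exists.

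There is no real obstacle here. The only point needing care is checking that validity of~\eqref{pac3:lift_ineq} holds for \myref{F3-V} and not just \myref{F3}. The proof of Lemma~\ref{lem:d3_lift_ineq} uses only~\eqref{pac3:sumx},~\eqref{pac3:xbin} and~\eqref{pac3:sumdx}, which \myref{F3-V} also contains, so it applies verbatim.
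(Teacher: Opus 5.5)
Your proof is correct and follows the paper's intended argument. The paper omits this proof as analogous to that of Proposition~\ref{prop:new_d3_r}: add the lifted inequalities~\eqref{pac3:lift_ineq} (valid by Lemma~\ref{lem:d3_lift_ineq}, which carries over to \myref{F3-V}), drop the superseded constraints~\eqref{pac3:sumdx}, relax integrality, and use $\sum_{A} x_{iA}=1$ to get $\tilde z\ge LB$, which is exactly what you do, with the bounds check via~\eqref{pac3vlb:valid} correctly handled.
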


We do not give the proof of Proposition~\ref{prop:lift_d3_v_r} here, as it is 
analogous to that of Proposition~\ref{prop:new_d3_r}.
Proposition~\ref{prop:lift_d3_v_r} gives rise to an iterative procedure based on~\myref{F3-V} to 
improve lower bounds on $\nu(\text{\paCP})$ analogously to the iterative procedure based on~\myref{F3}
described in Section~\ref{sec:liftingD3basic}.
Similar to Theorem~\ref{theo:d3_lift_conv}, the following theorem gives a 
characterization of lower bounds~$LB$ that can not be improved anymore by 
solving \myref{LF3-V}($LB$).

\begin{theorem}\label{theo:d3v_lift_conv}
	Let~$LB$ be a lower bound on $\nu(\text{\paCP})$. Then $\mathcal{L}_{3V}(LB)~=~LB$ holds if and only if there is a feasible solution of 
	the fractional $\alpha$ set cover problem based on valid inequalities
	\begin{subequations}
		\begin{alignat}{3}
			&\textnormal{\mytag{FASC-V}}(LB) \qquad
			& \min \quad \sum_{j\in J} y_j \phantom{iiii} \label{fac2lb:z}  \\ 
			&& \st~  \sum_{\substack{A\in \mathcal{J}^\alpha\\d_{iA}\leq LB}} u_{iA} & \geq 1 \qquad&& \forall i\in \cus \label{fac2lb:sumu}\\
			&& \sum_{\substack{A\in \mathcal{J}^\alpha\\ j\in A}} u_{iA} &\leq y_j &&  \forall i\in I, \forall j\in J \label{fac2lb:u_a}\\
			&& u_{iA} &\geq 0 && \forall i\in I, \forall A\in \mathcal{J}^\alpha \label{fac2lb:ubin} \\
			&& y_{j} &\leq 1 && \forall j \in \loc.  \label{fac2lb:ybin} 
		\end{alignat}
	\end{subequations}
	with objective function value at most~$p$.
\end{theorem}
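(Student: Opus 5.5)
The plan is to mirror the proof of Theorem~\ref{theo:d3_lift_conv}. The key difference is that the covering variables are now indexed per customer, $u_{iA}$, and the coupling constraint \eqref{fac2lb:u_a} has exactly the aggregated form of \eqref{pac3vlb:valid}. This makes both directions more direct than before, because no $\min\{y_j : j\in A\}$ construction is needed.

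For the forward direction, assume $\mathcal{L}_{3V}(LB)=LB$ and take an optimal solution $(\tilde x,\tilde y,\tilde z)$ of \myref{LF3-V}($LB$) with $\tilde z=LB$. As in the earlier proof, chaining \eqref{pac3vlb:l_opt} and \eqref{pac3vlb:sumx} gives
\[
LB \geq \sum_{A} \max\{LB,d_{iA}\}\tilde x_{iA} \geq LB
\]
for every $i$. Together with nonnegativity, this forces $\tilde x_{iA}=0$ whenever $d_{iA}>LB$. We then set $\tilde u_{iA}=\tilde x_{iA}$:
\begin{itemize}
\item \eqref{fac2lb:sumu} follows from \eqref{pac3vlb:sumx}, since all mass sits on sets with $d_{iA}\le LB$;
\item \eqref{fac2lb:u_a} is literally \eqref{pac3vlb:valid};
\item \eqref{fac2lb:ubin} and \eqref{fac2lb:ybin} carry over;
\item the objective equals $p$ by \eqref{pac3vlb:sumy}.
\end{itemize}

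For the converse, start from a feasible $(y^*,u^*)$ of \myref{FASC-V}($LB$) with $\sum_j y^*_j\le p$. First raise $y^*$ componentwise to some $\tilde y$ with $y^*\le\tilde y\le 1$ and $\sum_j\tilde y_j=p$, which is possible since $p<m$. Constraints \eqref{fac2lb:u_a} remain valid because their right-hand sides only grow. Next, for each customer $i$, scale down $u^*_{i\cdot}$ restricted to the sets with $d_{iA}\le LB$ so that it sums to exactly one. Concretely, let $s_i=\sum_{A:\,d_{iA}\le LB}u^*_{iA}\ge1$, and put $\tilde x_{iA}=u^*_{iA}/s_i$ if $d_{iA}\le LB$ and $\tilde x_{iA}=0$ otherwise. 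The sorted-threshold construction with $\sigma_i$ from Theorem~\ref{theo:d3_lift_conv} would also work, but uniform scaling suffices here. Then
\begin{itemize}
\item \eqref{pac3vlb:sumx} holds by construction;
\item $\sum_{A\ni j}\tilde x_{iA}\le\sum_{A\ni j}u^*_{iA}\le y^*_j\le\tilde y_j$ gives \eqref{pac3vlb:valid};
\item with $\tilde z=LB$, \eqref{pac3vlb:l_opt} holds with equality, since every set in the support has $\max\{LB,d_{iA}\}=LB$.
\end{itemize}
This yields $\mathcal{L}_{3V}(LB)\le LB$, and combined with Proposition~\ref{prop:lift_d3_v_r} we get equality.

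No step looks genuinely hard. The only point needing care is in the converse: reducing the $u$-variables must preserve the aggregated constraint \eqref{pac3vlb:valid}. This works because the scaling factor is at most one and the dropped terms are nonnegative, so every partial sum over $A\ni j$ only decreases. The monotone increase of $y$ must likewise be checked against \eqref{fac2lb:u_a}, which is immediate.
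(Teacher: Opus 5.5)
Your proof is correct and follows the paper's argument. The forward direction is identical: you set $\tilde u=\tilde x$ after showing that $\tilde x_{iA}=0$ whenever $d_{iA}>LB$. The converse likewise first raises $y$ until it sums to exactly $p$.

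The only deviation is that you normalize $u^*_{i\cdot}$ uniformly over the sets with $d_{iA}\le LB$, instead of using the paper's threshold construction via $A_i$ and $\sigma_i$. This is a valid simplification. All that is needed is $0\le\tilde x_{iA}\le u^*_{iA}$, total mass one, and support on $\{A: d_{iA}\le LB\}$. The paper's choice additionally concentrates the mass on the closest sets, which the statement does not require.
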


\begin{proof}
	First, assume~$\mathcal{L}_{3V}(LB)=LB$ holds. Then, there is an optimal solution~$(\tilde{x},\tilde{y},\tilde{z})$ of~\myref{LF3-V}($LB$) with~$\tilde{z}=LB$. Let~$\tilde{u}=\tilde{x}$.
	Analogous to the proof of Theorem~\ref{theo:d3_lift_conv}, we can show that~$1=\sum_{\substack{A\in \mathcal{J}^\alpha\\d_{iA}\leq LB}} \tilde{x}_{iA}$ for all~$i\in I$.
	Then~$1 = \sum_{\substack{A\in \mathcal{J}^\alpha\\d_{iA}\leq LB}} \tilde{u}_{iA}$ for all~$i\in I$ and constraints~\eqref{fac2lb:sumu} hold for $\tilde{u}$. Constraints~\eqref{fac2lb:u_a} to~\eqref{fac2lb:ybin} follow analogously to the proof of Theorem~\ref{theo:d3_lift_conv}. Thus,~$(\tilde{y}, \tilde{u})$ is a feasible solution of~\myref{FASC-V}($LB$) with objective function value~$p$, which finishes the proof of the first direction of the equivalence.  
	
	For the other direction, let~$(\tilde{y}, \tilde{u})$ be a feasible solution of \myref{FASC-V}($LB$) with objective function value~$p$. Such a solution exists by similar arguments as in Theorem~\ref{theo:d3_lift_conv}. Define~$\tilde{z}=LB$. It is easy to see that $\tilde{y}$ and~$\tilde{z}$ satisfy~\eqref{pac3vlb:sumy},~\eqref{pac3vlb:ybin} and~\eqref{pac3vlb:zbin}. 
	Next, we choose 
	\[
		A_i \in \arg\min_{A^\prime\in \mathcal{J}^\alpha} \left\{d_{iA^\prime}: \sum_{\substack{A\in \mathcal{J}^\alpha\\d_{iA} <d_{iA^\prime}}} \tilde{u}_{iA} < 1 \textrm{ and } \sum_{\substack{A\in \mathcal{J}^\alpha\\d_{iA}\leq d_{iA^\prime}}} \tilde{u}_{iA} \geq 1 \right\}
	\]
	and define
	\[
		\tilde{x}_{iA} = \left\{ \begin{array}{ll}
			\tilde{u}_{iA} & \textrm{if } d_{iA} < d_{iA_i} \\
			\tilde{u}_{iA} \sigma_i & \textrm{if } d_{iA} = d_{iA_i} \\
			0 & \textrm{if } d_{iA} > d_{iA_i} 
		\end{array}\right.
	\]
	with
	\[
		\sigma_i = \frac{\left( 1 - \sum_{\substack{A\in \mathcal{J}^\alpha\\d_{iA}< d_{iA_i}}} \tilde{u}_{iA} \right)}{\sum_{\substack{A\in \mathcal{J}^\alpha\\d_{iA}= d_{iA_i}}} \tilde{u}_{iA}}
	\]
	for all $i\in I$ and $A\in \mathcal{J}^\alpha$.
	Then, by similar arguments as in the proof of Theorem~\ref{theo:d3_lift_conv}, we can show that~$(\tilde{x}, \tilde{y}, \tilde{z})$ with $\tilde{x} = (\tilde{x}_{iA})_{i\in I, A\in \mathcal{J}^\alpha}$ satisfies constraints~\eqref{pac3vlb:sumx},~\eqref{pac3vlb:valid},~\eqref{pac3vlb:l_opt} and~\eqref{pac3vlb:xbin}. 
	Therefore,~$(\tilde{x}, \tilde{y}, \tilde{z})$ is a feasible solution of \myref{LF3-V}($LB$) with objective function value~$\tilde{z}=LB$. This implies~$\mathcal{L}_{3V}(LB) \leq LB$, which together with~$\mathcal{L}_{3V}(LB) \geq LB$ from Proposition~\ref{prop:lift_d3_v_r} yields~$\mathcal{L}_{3V}(LB) = LB$. This finishes the proof.	
\end{proof}

We note that if a lower bound $LB$ on $\nu(\text{\paCP})$ satisfies $\mathcal{L}_{3V}(LB) = LB$, then the same is true for all greater lower bounds $LB' > LB$ as well by Theorem~\ref{theo:d3v_lift_conv}. Therefore, we define the lower bound $LB^{\#}_{3V} =  \min \{LB\in \mathbb{R}: \mathcal{L}_{3V}(LB) = LB\}$ that can be efficiently computed by an iterative approach similar to that described in Section~\ref{sec:liftingD3basic} as the following theorem shows. 

\begin{theorem}\label{theo:polytimeLB3v}
	For a fixed value of~$\alpha$, the lower bound~$LB^{\#}_{3V}$ can be computed in polynomial time. 	
\end{theorem}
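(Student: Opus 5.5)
The proof follows Theorem~\ref{theo:polytime_LB3}, with Theorem~\ref{theo:d3v_lift_conv} replacing Theorem~\ref{theo:d3_lift_conv}. It has two parts. First we show that $LB^{\#}_{3V} \in D^\alpha$. Then we show that the iterative procedure, restricted to values in $D^\alpha$, reaches $LB^{\#}_{3V}$ after polynomially many polynomial-size LPs.

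For the first part, we argue by contradiction. Suppose $LB^{\#}_{3V}\notin D^\alpha$ and let $d^{\max} = \max\{d\in D^\alpha: d\le LB^{\#}_{3V}\}$, so that $d^{\max} < LB^{\#}_{3V}$. This maximum exists: since $LB^{\#}_{3V} \ge \mathcal{L}_{3V}$ of any initial bound and $\nu(\text{\paCP})\ge \min D^\alpha$, we may assume $LB^{\#}_{3V}\ge \min D^\alpha$. If not, constraint~\eqref{fac2lb:sumu} cannot be satisfied, because its left-hand side is an empty sum; Theorem~\ref{theo:d3v_lift_conv} then gives $\mathcal{L}_{3V}(LB)\neq LB$, which contradicts the definition of $LB^{\#}_{3V}$.

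Now take a feasible solution $(\tilde y,\tilde u)$ of \myref{FASC-V}($LB^{\#}_{3V}$) with objective value at most $p$; it exists by Theorem~\ref{theo:d3v_lift_conv}. The sets $\{A\in\mathcal{J}^\alpha: d_{iA}\le LB^{\#}_{3V}\}$ and $\{A\in\mathcal{J}^\alpha: d_{iA}\le d^{\max}\}$ coincide for every $i\in I$, since no $\alpha$-distance lies in $(d^{\max}, LB^{\#}_{3V}]$. Hence $(\tilde y,\tilde u)$ is also feasible for \myref{FASC-V}($d^{\max}$). Constraints~\eqref{fac2lb:u_a}--\eqref{fac2lb:ybin} do not depend on $LB$. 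Theorem~\ref{theo:d3v_lift_conv} then yields $\mathcal{L}_{3V}(d^{\max})=d^{\max}$, which contradicts minimality.

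A subtle point is that Theorem~\ref{theo:d3v_lift_conv} is stated for lower bounds $LB$ on $\nu(\text{\paCP})$. We need $d^{\max}$ to qualify, and it does since $d^{\max}<LB^{\#}_{3V}\le\nu(\text{\paCP})$. The last inequality holds because the fixed point can be reached from a valid lower bound. Alternatively, note that the equivalence proof never uses the lower-bound property.

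For the algorithm, start with $d^{\min}=\min D^\alpha$, which is a valid lower bound. Solve \myref{LF3-V}($LB$) and round the optimum up to $d'=\min\{d\in D^\alpha: d\ge \mathcal{L}_{3V}(LB)\}$. This rounding keeps a valid lower bound, because $\nu(\text{\paCP})\in D^\alpha$. Stop when $d'=LB$.

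Monotonicity comes from Proposition~\ref{prop:lift_d3_v_r}, which gives $\mathcal{L}_{3V}(LB)\ge LB$, and from the fact that the fixed-point property is upward closed. Together with $LB^{\#}_{3V}\in D^\alpha$, this shows that the iterates never overshoot $LB^{\#}_{3V}$. They cannot exceed it: $\mathcal{L}_{3V}$ is nondecreasing in $LB$ because the coefficients $\max\{LB,d_{iA}\}$ are nondecreasing, so for $LB\le LB^{\#}_{3V}$ we get $\mathcal{L}_{3V}(LB)\le\mathcal{L}_{3V}(LB^{\#}_{3V})=LB^{\#}_{3V}$. They strictly increase in $D^\alpha$ until they stop, and they stop exactly at $LB^{\#}_{3V}$.

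There are therefore at most $|D^\alpha|\le n\binom{m}{\alpha}$ iterations. Each iteration is an LP with $O(nm^\alpha)$ variables and $O(nm)$ constraints, so for fixed $\alpha$ the whole procedure runs in polynomial time.

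The main obstacle is the convergence bookkeeping: showing that rounding up never skips past $LB^{\#}_{3V}$. The monotonicity of $\mathcal{L}_{3V}$ in $LB$ handles this, and the rest is routine.
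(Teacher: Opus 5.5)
Your proposal is correct and takes the same route as the paper, which proves this theorem by analogy with Theorem~\ref{theo:polytime_LB3}: first show $LB^{\#}_{3V}\in D^\alpha$ by contradiction via the characterization in Theorem~\ref{theo:d3v_lift_conv}, then iterate \myref{LF3-V}($LB$) from $\min D^\alpha$, rounding up to the next $\alpha$-distance each time. Your extra steps make explicit what the paper leaves implicit: monotonicity of $\mathcal{L}_{3V}$ in $LB$ rules out overshooting $LB^{\#}_{3V}$, and the count $|D^\alpha|\le n\binom{m}{\alpha}$ replaces the paper's looser $m^\alpha$.
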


The proof of Theorem~\ref{theo:polytimeLB3v} can be done analogously to the one of 
Theorem~\ref{theo:polytime_LB3} by exploiting the fact that 
the LP \myref{FASC-V}($LB$) has $O(nm^\alpha)$ variables, 
and~$O(nm)$~constraints and is therefore omitted here.

We close this section by comparing this new lower bound~$LB^{\#}_{3V}$ based on the stronger LP-relaxation of \myref{F3-V} to the one based on the weaker LP-relaxation of \myref{F3} from the previous section. 

\begin{theorem}\label{theo:lift_bounds_d3}
	$LB^{\#}_{3} \leq LB^{\#}_{3V}$.
\end{theorem}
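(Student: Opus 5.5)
Since both bounds are defined as minima of the sets of fixed points, $LB^{\#}_3 = \min\{LB: \mathcal{L}_3(LB)=LB\}$ and $LB^{\#}_{3V} = \min\{LB: \mathcal{L}_{3V}(LB)=LB\}$, it suffices to show the inclusion $\{LB: \mathcal{L}_{3V}(LB)=LB\} \subseteq \{LB: \mathcal{L}_{3}(LB)=LB\}$. Applied to $LB = LB^{\#}_{3V}$, this gives $\mathcal{L}_3(LB^{\#}_{3V}) = LB^{\#}_{3V}$, and minimality of $LB^{\#}_3$ then yields the claim. By Theorems~\ref{theo:d3_lift_conv} and~\ref{theo:d3v_lift_conv}, the inclusion is equivalent to the following statement: whenever \myref{FASC-V}($LB$) has a feasible solution of value at most $p$, so does \myref{FASC}($LB$).

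The most direct route is to compare the LPs \myref{LF3}($LB$) and \myref{LF3-V}($LB$) without passing through the set cover problems. Every feasible $(x,y,z)$ of \myref{LF3-V}($LB$) is feasible for \myref{LF3}($LB$). Indeed, \eqref{pac3vlb:valid} together with $x\geq 0$ gives, for each $A\in\mathcal{J}^\alpha$ and $j\in A$,
\[
x_{iA} \le \sum_{\substack{A'\in \mathcal{J}^\alpha\\ j\in A'}} x_{iA'} \le y_j,
\]
which is \eqref{pac3lb:xy}. All other constraints coincide. Hence $\mathcal{L}_3(LB)\le \mathcal{L}_{3V}(LB)$ for every $LB$. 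Combining this with $\mathcal{L}_3(LB)\ge LB$ from Proposition~\ref{prop:new_d3_r}, we conclude that $\mathcal{L}_{3V}(LB)=LB$ forces $\mathcal{L}_3(LB)=LB$, which is exactly the required inclusion.

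As a sanity check, the same conclusion can be obtained at the set cover level. Given a feasible $(y,u)$ of \myref{FASC-V}($LB$) with value at most $p$, define $\hat u_A = \max_{i\in I} u_{iA}$. Then $\hat u_A \le \sum_{A'\ni j} u_{iA'} \le y_j$ for every $j\in A$, and $\sum_{A:\, d_{iA}\le LB}\hat u_A \ge \sum_{A:\, d_{iA}\le LB} u_{iA}\ge 1$ for every $i\in I$. Thus $(y,\hat u)$ is feasible for \myref{FASC}($LB$) with the same objective value.

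There is one subtle point, not a real obstacle: both minima must be attained, so that applying the argument at $LB=LB^{\#}_{3V}$ is legitimate. This is guaranteed by the proof of Theorem~\ref{theo:polytime_LB3} and its analogue Theorem~\ref{theo:polytimeLB3v}. The fixed-point sets are upward closed, and the relevant values lie in the finite set $D^\alpha$, so both minima exist.
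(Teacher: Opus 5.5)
Your proof is correct and is essentially the paper's argument. Every feasible point of \myref{LF3-V}($LB$) is feasible for \myref{LF3}($LB$), so together with $\mathcal{L}_3(LB)\geq LB$ the value $LB^{\#}_{3V}$ is a fixed point of $\mathcal{L}_3$, and minimality of $LB^{\#}_3$ gives the claim; you additionally spell out the linking-constraint check that the paper calls easy to see, and your set-cover cross-check via $\hat u_A=\max_{i\in I}u_{iA}$ is also valid but not needed.
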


\begin{proof}
	Let $(\tilde{x}, \tilde{y}, \tilde{z})$ be an optimal solution of \myref{LF3-V}($LB^{\#}_{3V}$). By the definition of $LB^{\#}_{3V}$, we know $\tilde{z} = LB^{\#}_{3V}$. It is easy to see that $(\tilde{x}, \tilde{y}, \tilde{z})$ is a feasible solution of \myref{LF3}($LB^{\#}_{3V}$), hence $\mathcal{L}_3(LB^{\#}_{3V}) \leq \tilde{z} = LB^{\#}_{3V}$. Because of Proposition~\ref{prop:new_d3_r}, this already implies $\mathcal{L}_3(LB^{\#}_{3V}) = LB^{\#}_{3V}$.
	Then, $LB^{\#}_{3} \leq LB^{\#}_{3V}$ by the definition of $LB^{\#}_{3}$.
\end{proof}

Preliminary computations showed that there are instances, for which $LB^{\#}_{3} < LB^{\#}_{3V}$ holds. This is not surprising, as in Examples~\ref{ex:d1vsd3} and~\ref{ex:d1vsd3v} we saw an instance for which $\nu\myref{F3-R} < \nu\myref{F3-V-R}$ holds.
Thus, adding the lifted inequalities~\eqref{pac3:lift_ineq} iteratively 
to \myref{F3} and \myref{F3-V} does not  
close the potential gap in the bounds obtained from their LP-relaxations in general. 

\section{Lifting of formulation (F1)}\label{sec:lifting_D1}

Next, we want to apply the lifting procedure of \citet{gaar2022scaleable} 
to our MIP formulation~\myref{F1}.  
Unfortunately, this endeavor is much more challenging for~\myref{F1}, as we do 
not have the potential optimal objective function value as 
coefficients 
of 
single variables, such that we could directly apply the 
lifting as in the 
lifted inequalities~\eqref{pac3:lift_ineq} for formulations~\myref{F3} 
and~\myref{F3-V}.
To overcome this obstacle, we
define 
$$S_i(LB) = \left\{w_i\in \mathbb{R}^m: \sum_{j\in A} w_{ij} \leq \max \{LB, d_{iA}\} \textrm{ for all } A\in \mathcal{J}^\alpha\right\}$$
for $LB \in \mathbb{R}$.
This allows us to obtain 
valid inequalities, as the following lemma shows.  

\begin{lemma}\label{lem:d1_lift_ineq}
	Let~$LB$ be a lower bound on $\nu(\text{\paCP})$.
	Then,  
	\begin{equation}\label{pac1:lift_ineq}
		\sum_{j\in J} w_{ij} x_{ij} \leq z \qquad \forall i\in I, \forall 
		w_i\in S_i(LB) 
	\end{equation}
	are valid inequalities for \myref{F1}, which we denote 
	by \emph{lifted inequalities.}
\end{lemma}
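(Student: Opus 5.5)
The plan is to mimic the proof of Lemma~\ref{lem:d3_lift_ineq}, using that a feasible solution of \myref{F1} assigns each customer to exactly one $\alpha$-subset of facilities. Fix a feasible solution $(\tilde{x},\tilde{y},\tilde{z})$ of \myref{F1}, a customer $i\in I$ and some $w_i\in S_i(LB)$. By \eqref{pac1:sumx} and \eqref{pac1:xbin}, the set $A_i=\{j\in J:\tilde{x}_{ij}=1\}$ has exactly $\alpha$ elements, so $A_i\in\mathcal{J}^\alpha$ and $\tilde{x}_{ij}=0$ for $j\notin A_i$. Hence
\[
\sum_{j\in J} w_{ij}\tilde{x}_{ij}=\sum_{j\in A_i} w_{ij}\leq \max\{LB, d_{iA_i}\},
\]
where the inequality is exactly the defining constraint of $S_i(LB)$ for the subset $A=A_i$.

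It remains to bound $\max\{LB,d_{iA_i}\}$ by $\tilde{z}$. First, $d_{iA_i}=\sum_{j\in J} d_{ij}\tilde{x}_{ij}\leq \tilde{z}$ by \eqref{pac1:sumdx}. Second, $LB\leq \tilde{z}$ should follow from $LB$ being a lower bound on $\nu(\text{\paCP})$. This is the only subtle point, since it holds for optimal (or objective-relevant) solutions rather than for every feasible point. I would handle it exactly as in Lemma~\ref{lem:d3_lift_ineq}. The $y$-part of any feasible solution is a set $P$ of $p$ open facilities, and by \eqref{pac1:sumdx} we get $\tilde{z}\geq \max_i d_{iA_i}\geq f_\alpha(P)\geq \nu(\text{\paCP})\geq LB$. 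Indeed, $A_i\subseteq P$ by \eqref{pac1:xy}, so $d_{iA_i}\geq d_\alpha(P,i)$.

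Combining the two bounds gives $\sum_{j\in J} w_{ij}\tilde{x}_{ij}\leq\tilde{z}$ for all $i$ and all $w_i\in S_i(LB)$, which proves validity. I expect no real obstacle. The one step needing care is the observation $d_{iA_i}\geq d_\alpha(P,i)$, which justifies $LB\leq\tilde{z}$ for every feasible point and not just for optimal ones.
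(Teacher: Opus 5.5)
Your proof is correct and follows essentially the same argument as the paper. You extract the $\alpha$-set $A_i=\{j\in J:\tilde{x}_{ij}=1\}$, apply the defining inequality of $S_i(LB)$ for $A=A_i$, and bound $\max\{LB,d_{iA_i}\}$ by $\tilde{z}$ using \eqref{pac1:sumdx} and the lower bound, except that you also justify $LB\leq\tilde{z}$ for every feasible point (via $A_i\subseteq P$, hence $d_{iA_i}\geq d_\alpha(P,i)$ and $\tilde{z}\geq f_\alpha(P)\geq\nu(\text{\paCP})$), a step the paper simply calls clear.
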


\begin{proof}
	Let~$(\bar{x},\bar{y},\bar{z})$ be a feasible solution of \myref{F1}, let $i\in I$ and let~$w_{i}\in S_i(LB)$. 
	Define $A_i = \{j\in J: \bar{x}_{ij} = 1\}$, then $|A_i| = \alpha$ and $\bar{x}_{ij} = 0$ for all $j\in J\setminus A_i$ by~\eqref{pac1:sumx} and~\eqref{pac1:xbin}. 
	Thus, we have $\sum_{j\in J} w_{ij} \bar{x}_{ij} = \sum_{j\in A_i} w_{ij} \leq \max\{LB, d_{iA_i}\}$ since~$w_i \in S_i(LB)$ and $A_i\in \mathcal{J}^\alpha$.
	Additionally, $\max \{LB, d_{iA_i}\} \leq \bar{z}$ holds, because clearly $LB\leq \bar{z}$ and $d_{iA_i} = \sum_{j\in J} d_{ij} \bar{x}_{ij} \leq \bar{z}$ follows from~\eqref{pac1:sumdx}. 
	Therefore, $\sum_{j\in J} w_{ij} \bar{x}_{ij} \leq \bar{z}$ holds and thus~\eqref{pac1:lift_ineq} are valid inequalities for~\myref{F1}. 
\end{proof}
 
If we consider only optimal solutions, we can strengthen the valid inequalities~\eqref{pac1:lift_ineq} with 
the following idea.
If we are given an upper bound~$UB$ on $\nu(\text{\paCP})$ and a set $A\in 
\mathcal{J}^\alpha$ with $d_{iA} > UB$, then in any optimal solution for the 
\paCP no customer $i$ can be assigned to all
facilities $j\in A$, as such a solution cannot be optimal. 
Thus,  
we do not care if the condition in the definition of $S_i(LB)$ is satisfied for this specific $A$. 
This gives rise to the set
\[
	S_i(LB,UB) = \left\{w_i\in \mathbb{R}^m: \sum_{j\in A} w_{ij} \leq \max \{LB, d_{iA}\} \textrm{ for all } A\in \mathcal{J}^\alpha \textrm{ with } d_{iA} \leq UB \right\},
\] 
which is a superset of $S_i(LB)$, so $S_i(LB) \subseteq S_i(LB,UB)$ holds
for all $LB, UB \in \mathbb{R}$. 
The following lemma shows that~\eqref{pac1:lift_ineq} still hold for optimal solutions of \myref{F1}, even if $w_i$ comes from the superset $S_i(LB,UB)$. 

\begin{lemma}\label{lem:d1_lift_ineq_opt}
	Let~$LB$ and~$UB$ be lower and upper bounds on $\nu(\text{\paCP})$, respectively. 
	Then,  
	\begin{equation}\label{pac1:lift_ineq_opt}
		\sum_{j\in J} w_{ij} x_{ij} \leq z \qquad \forall i\in I, \forall 
		w_i\in S_i(LB,UB) 
	\end{equation}
	are optimality-preserving inequalities for \myref{F1}, which we denote 
	by \emph{extended lifted inequalities.}
	In particular, every optimal solution of \myref{F1} satisfies 
	\eqref{pac1:lift_ineq_opt}.
\end{lemma}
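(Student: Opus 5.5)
The plan is to reuse the argument of Lemma~\ref{lem:d1_lift_ineq} and restrict it to optimal solutions. For those, the upper bound guarantees that the assigned subset is one of the sets $A$ that still appear in the definition of $S_i(LB,UB)$. The argument runs as follows.

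First, fix an optimal solution $(\bar{x},\bar{y},\bar{z})$ of \myref{F1}, a customer $i\in I$ and some $w_i\in S_i(LB,UB)$. As in the proof of Lemma~\ref{lem:d1_lift_ineq}, define $A_i=\{j\in J:\bar{x}_{ij}=1\}$. Constraints \eqref{pac1:sumx} and \eqref{pac1:xbin} give $A_i\in\mathcal{J}^\alpha$ and $\bar{x}_{ij}=0$ for all $j\notin A_i$. It follows that $\sum_{j\in J}w_{ij}\bar{x}_{ij}=\sum_{j\in A_i}w_{ij}$.

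The key step is to show $d_{iA_i}\leq UB$, since then the defining condition of $S_i(LB,UB)$ applies to $A=A_i$. Optimality of $(\bar{x},\bar{y},\bar{z})$ gives $\bar{z}=\nu\myref{F1}=\nu(\text{\paCP})\leq UB$. Constraint \eqref{pac1:sumdx} gives $d_{iA_i}=\sum_{j\in J}d_{ij}\bar{x}_{ij}\leq\bar{z}$. Combining the two yields $d_{iA_i}\leq UB$, and therefore $\sum_{j\in A_i}w_{ij}\leq\max\{LB,d_{iA_i}\}$.

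Finally, $LB\leq\nu(\text{\paCP})=\bar{z}$ and $d_{iA_i}\leq\bar{z}$, so $\max\{LB,d_{iA_i}\}\leq\bar{z}$, which is \eqref{pac1:lift_ineq_opt} for this $i$ and $w_i$. Since the optimal solution, $i$ and $w_i$ were arbitrary, every optimal solution of \myref{F1} satisfies all extended lifted inequalities. In particular at least one optimal solution does, so they are optimality-preserving.

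The only point needing care is the key step above. Unlike in Lemma~\ref{lem:d1_lift_ineq}, we cannot take arbitrary feasible solutions, because a feasible solution may assign $i$ to a set with $d_{iA}>UB$, where $w_i$ is unconstrained. The argument therefore uses $\bar{z}\leq UB$, which holds only for optimal solutions.
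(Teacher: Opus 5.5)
Your proof is correct and matches the paper's: fix an optimal solution, show $d_{iA_i}\le UB$ via \eqref{pac1:sumdx} and optimality, then reuse the argument of Lemma~\ref{lem:d1_lift_ineq}. The only difference is that the paper obtains $d_{iA_i}\le UB$ by contradiction, whereas you chain $d_{iA_i}\le\bar z\le UB$ directly.
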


\begin{proof}
	Let~$(\bar{x},\bar{y},\bar{z})$ be an optimal solution of \myref{F1}, let $i\in I$ and let~$w_{i}\in S_i(LB,UB)$.
	Again, we define $A_i = \{j\in J: \bar{x}_{ij} = 1\}$ and 
	can derive $d_{iA_i} \leq UB$, as otherwise
	$UB < d_{iA_i} = \sum_{j\in A_i} d_{ij} = \sum_{j\in J} d_{ij} \bar{x}_{ij} \leq \bar{z}$
	contradicts the optimality of~$(\bar{x},\bar{y},\bar{z})$.
	Therefore, $A_i$ is one of the sets in $\mathcal{J}^\alpha$ for which the condition given in $S_i(LB,UB)$ is satisfied. 	
	Hence, $(\bar{x},\bar{y},\bar{z})$ satisfies \eqref{pac1:lift_ineq_opt} by the same arguments as in the proof of Lemma~\ref{lem:d1_lift_ineq}. 
\end{proof}

Note, that $S_i(LB) = S_i(LB, \infty)$. We will use the stronger set of 
inequalities~\eqref{pac1:lift_ineq_opt} in our implementation detailed in 
Section~\ref{sec:implementation}. However, for the theory described in the 
remainder of this section, we assume $UB = \infty$ and utilize the lifted
inequalities~\eqref{pac1:lift_ineq} analogously to how it is done the approach in 
Section~\ref{sec:lifting_D3}. Indeed, by adding these lifted inequalities 
to the 
LP-relaxation of \myref{F1}, we obtain a new lower bound on $\nu(\text{\paCP})$ 
that is possibly stronger in the following way.

\begin{proposition}\label{prop:new_d1_r}
	Let~$LB$ be a lower bound on $\nu(\text{\paCP})$ 
	and let
	\begin{subequations}
		\begin{alignat}{3}
			& \mathrm{\mytag{LF1}}(LB) \qquad
			& \mathcal{L}_1(LB) =  \min \quad \obj \phantom{iiii} \label{pac1lb:z}  \\ 
			&& \st~    \sum_{j \in \loc}y_{j}       & =     p                         &&                               \label{pac1lb:sumy} \\
			&&             \sum_{j \in \loc}x_{ij}       & =     \alpha                  && \forall i \in \cus                          \label{pac1lb:sumx} \\
			&& x_{ij}                 & \leq  y_j                        \qquad && \forall i\in \cus, \forall j\in \loc                   \label{pac1lb:xy} \\
			&& \sum_{j\in J} w_{ij} x_{ij} &\leq z &&\forall i\in I, \forall w_i\in S_i(LB) \label{pac1lb:lift}\\
			&        &               x_{ij} &\geq 0                   && \forall i\in \cus, \forall j\in \loc                    \label{pac1lb:xbin}     \\
			&        &               y_{j}                  & \leq 1  && \forall j \in \loc                           \label{pac1lb:ybin} \\
			&& z &\in \mathbb{R}.  \label{pac1lb:zbin}  
		\end{alignat}
	\end{subequations}
	Then, $\mathcal{L}_1(LB)$ is a lower bound on $\nu(\text{\paCP})$ and $\mathcal{L}_1(LB) \geq LB$ holds.
\end{proposition}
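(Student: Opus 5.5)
The proof has two parts: first that $\mathcal{L}_1(LB)$ is a valid lower bound, then that it is at least $LB$. Both follow the pattern of Proposition~\ref{prop:new_d3_r}.

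For the first part, observe that \myref{LF1}($LB$) is the LP-relaxation \myref{F1-R} with the lifted inequalities~\eqref{pac1:lift_ineq} added, and with~\eqref{pac1:sumdx} no longer listed explicitly. Constraint~\eqref{pac1:sumdx} is implied by~\eqref{pac1lb:lift}, because the vector $w_i = (d_{ij})_{j\in J}$ lies in $S_i(LB)$: for every $A\in\mathcal{J}^\alpha$ we have $\sum_{j\in A} d_{ij} = d_{iA} \leq \max\{LB, d_{iA}\}$. By Lemma~\ref{lem:d1_lift_ineq}, every feasible solution of \myref{F1} satisfies~\eqref{pac1lb:lift}, so every feasible solution of \myref{F1} is feasible for \myref{LF1}($LB$). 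Hence $\mathcal{L}_1(LB) \leq \nu\myref{F1} = \nu(\text{\paCP})$. The LP has infinitely many constraints, but its feasible region is closed and $z$ is bounded below, as the second part shows, so the minimum is attained or at least the infimum is well defined.

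For the second part, I need that every feasible point of \myref{LF1}($LB$) has $z \geq LB$. I would choose the vector $w_i$ with $w_{ij} = LB/\alpha$ for all $j\in J$. It lies in $S_i(LB)$, since for each $A\in\mathcal{J}^\alpha$ we get $\sum_{j\in A} w_{ij} = \alpha \cdot LB/\alpha = LB \leq \max\{LB, d_{iA}\}$. Plugging this $w_i$ into~\eqref{pac1lb:lift} and using~\eqref{pac1lb:sumx} gives
\[
z \;\geq\; \sum_{j\in J} \frac{LB}{\alpha}\, x_{ij} \;=\; \frac{LB}{\alpha}\cdot \alpha \;=\; LB .
\]
This holds for any $i \in I$, which is nonempty, and so $\mathcal{L}_1(LB) \geq LB$.

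The only subtle step is picking this uniform vector $w_i$. The analogous step in Proposition~\ref{prop:new_d3_r} used the coefficient $LB$ on each subset variable, and here that has to be spread evenly over single facilities. This works because $\alpha \geq 1$, so dividing by $\alpha$ is allowed. It also works without any sign assumption on $LB$, since the bound $LB \leq \max\{LB, d_{iA}\}$ holds trivially.
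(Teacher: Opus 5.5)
Your proof is correct and is essentially the paper's argument. Validity comes from Lemma~\ref{lem:d1_lift_ineq} together with the observation that $(d_{ij})_{j\in J}\in S_i(LB)$, which makes \eqref{pac1:sumdx} redundant; $\mathcal{L}_1(LB)\geq LB$ comes from plugging the uniform vector $w_{ij}=LB/\alpha$ into \eqref{pac1lb:lift} and using \eqref{pac1lb:sumx}, with your explicit feasibility-transfer from \myref{F1} and remark on the infinitely many constraints being details the paper leaves implicit.
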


\begin{proof}		
	We can improve the LP-relaxation \myref{F1-R} of \myref{F1} by adding the 
	lifted inequalities~\eqref{pac1:lift_ineq} because of 
	Lemma~\ref{lem:d1_lift_ineq}. 
	Morover, the set $S_i(LB)$ of 
	coefficients of the lifted inequalities~\eqref{pac1:lift_ineq} contains the 
	coefficients defining the constraints~\eqref{pac1:sumdx}, and thus we can 
	omit including~\eqref{pac1:sumdx} explicitly. As a result, 
	$\mathcal{L}_1(LB)$ is a lower bound on $\nu(\text{\paCP})$.
	In order to show~$\mathcal{L}_1(LB) \geq LB$, let~$(\bar{x}, \bar{y}, 
	\bar{z})$ be a feasible solution of \myref{LF1}($LB$). Let~$i\in I$ 
	and define~$\bar{w}_{ij} = \frac{LB}{\alpha}$ for all~$j\in J$. 
	Then,~$\sum_{j\in A} \bar{w}_{ij} = \sum_{j\in A}\frac{LB}{\alpha} = LB 
	\leq \max \{LB, d_{iA}\}$ for all~$A\in \mathcal{J}^\alpha$, so~$w_{i} \in 
	S_i(LB)$. As a consequence, 
	$\bar{z}\geq \sum_{j\in J} \bar{w}_{ij} \bar{x}_{ij} = \frac{LB}{\alpha} \sum_{j\in J} \bar{x}_{ij} = LB$ because of~\eqref{pac1:sumx}, and~$\mathcal{L}(LB) \geq LB$ follows.
\end{proof}

We note that due to the lifted inequalities~\eqref{pac1lb:lift} the LP 
\myref{LF1}($LB$) has an infinite number of constraints. However, one can deal with 
these inequalities in a cutting plane-fashion, i.e., separate them when necessary. 
In our solution algorithm, which uses the (extended) lifted inequalities, we do 
so, see Section~\ref{sec:b_and_c} for details.

With the help of Proposition~\ref{prop:new_d1_r}, by defining $LB^{\#}_1 = \min \{LB \in \mathbb{R}: \mathcal{L}_1(LB) = LB\}$ we obtain a lower bound on $\nu(\text{\paCP})$. 
Next, we compare $LB^{\#}_1$ to $LB^{\#}_{3V}$ in the following two lemmata.

\begin{lemma} \label{LB1_leq_LB3V}
	$LB^{\#}_1 \leq LB^{\#}_{3V}$.
\end{lemma}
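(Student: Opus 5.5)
Set $LB = LB^{\#}_{3V}$. By definition of $LB^{\#}_{3V}$ we have $\mathcal{L}_{3V}(LB) = LB$, so there is an optimal solution $(\tilde{x},\tilde{y},\tilde{z})$ of \myref{LF3-V}($LB$) with $\tilde{z} = LB$. It suffices to show $\mathcal{L}_1(LB) \leq LB$. Proposition~\ref{prop:new_d1_r} then gives $\mathcal{L}_1(LB) = LB$, and the definition of $LB^{\#}_1$ as a minimum yields $LB^{\#}_1 \leq LB = LB^{\#}_{3V}$.

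To get $\mathcal{L}_1(LB)\le LB$, we build a feasible solution of \myref{LF1}($LB$) with objective value $LB$, exactly as in the proof of Lemma~\ref{lem:d1<=d3v}. Set $\bar{y}=\tilde{y}$, $\bar{z}=\tilde{z}=LB$, and
\[
\bar{x}_{ij}=\sum_{\substack{A\in\mathcal{J}^\alpha\\ j\in A}}\tilde{x}_{iA} \qquad \text{for all } i\in I,\ j\in J.
\]
The following constraints transfer directly:
\begin{itemize}[label={}]
\item \eqref{pac1lb:sumy} and \eqref{pac1lb:ybin} hold because $\bar y = \tilde y$.
\item \eqref{pac1lb:xy} is precisely \eqref{pac3vlb:valid}.
\item \eqref{pac1lb:xbin} follows from \eqref{pac3vlb:xbin}.
\item \eqref{pac1lb:sumx} follows from the double-counting argument of Lemma~\ref{lem:d1<=d3v}, since each $A$ has exactly $\alpha$ elements and \eqref{pac3vlb:sumx} holds.
\end{itemize}

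The only new ingredient is the family of infinitely many lifted constraints \eqref{pac1lb:lift}. Fix $i\in I$ and $w_i\in S_i(LB)$. Exchanging the order of summation as in Lemma~\ref{lem:d1<=d3v} gives
\[
\sum_{j\in J} w_{ij}\bar{x}_{ij}=\sum_{A\in\mathcal{J}^\alpha}\Big(\sum_{j\in A}w_{ij}\Big)\tilde{x}_{iA}\le \sum_{A\in\mathcal{J}^\alpha}\max\{LB,d_{iA}\}\tilde{x}_{iA}\le \tilde{z}=\bar z .
\]
The first inequality uses the definition of $S_i(LB)$ together with $\tilde{x}_{iA}\ge 0$. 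The second is \eqref{pac3vlb:l_opt}. This step is the crux of the argument, but it is short: the definition of $S_i(LB)$ turns the aggregated lifted constraint of \myref{LF3-V} into every lifted constraint of \myref{LF1} at once.

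The only remaining point is that $\mathcal{L}_1(LB)$ is actually attained (or at least bounded by $LB$). Since our feasible point already certifies $\mathcal{L}_1(LB)\le LB$, this causes no difficulty.
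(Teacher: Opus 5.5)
Your proposal is correct and follows the paper's proof essentially step for step: you take an optimal solution of \myref{LF3-V}($LB^{\#}_{3V}$) with value $LB^{\#}_{3V}$ and aggregate it via $\bar{x}_{ij}=\sum_{A\in\mathcal{J}^\alpha: j\in A}\tilde{x}_{iA}$. You then verify the lifted constraints \eqref{pac1lb:lift} by exchanging summation and using the definition of $S_i(LB)$ with \eqref{pac3vlb:l_opt}, and conclude with Proposition~\ref{prop:new_d1_r} and the minimality in the definition of $LB^{\#}_1$.
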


\begin{proof}
	We have $\mathcal{L}_{3V}(LB^{\#}_{3V}) = LB^{\#}_{3V}$ by definition, so let $(\tilde{x},\tilde{y},\tilde{z})$ be an optimal solution of \myref{LF3-V}($LB^{\#}_{3V}$) with $\tilde{z}=LB^{\#}_{3V}$. 
	Define $\bar{x}_{ij} = \sum_{\substack{A\in \mathcal{J}^\alpha\\ j\in A}} \tilde{x}_{iA}$ for all $i\in I$ and $j\in J$, $\bar{y} = \tilde{y}$ and $\bar{z} = \tilde{z}$. It is easy to see that constraints~\eqref{pac1lb:sumy} to~\eqref{pac1lb:xy} and constraints~\eqref{pac1lb:xbin} to~\eqref{pac1lb:zbin} are satisfied by $(\bar{x}, \bar{y}, \bar{z})$. 
	Furthermore, for all $i\in I$ and $w_i\in S_i(LB)$, we have
	\[
	\sum_{j\in J} w_{ij} \bar{x}_{ij} = \sum_{j\in J} w_{ij} \sum_{\substack{A\in \mathcal{J}^\alpha \\j\in A}} \tilde{x}_{iA} = \sum_{A\in \mathcal{J}^\alpha}  \left( \sum_{j\in A} w_{ij} \right) \tilde{x}_{iA} \leq \sum_{A\in \mathcal{J}^\alpha} \max\{LB, d_{iA}\} \tilde{x}_{iA}   \leq \tilde{z} = \bar{z}
	\]
	because of~\eqref{pac3lb:l_opt}, 
	hence constraints~\eqref{pac1lb:lift} hold as well. 
	This shows that $(\bar{x}, \bar{y}, \bar{z})$ is a feasible solution of \myref{LF1}($LB^{\#}_{3V}$) and therefore 
	$\mathcal{L}_1(LB^{\#}_{3V}) \leq LB^{\#}_{3V}$ holds. Since $\mathcal{L}_1(LB^{\#}_{3V}) \geq LB^{\#}_{3V}$ by Proposition~\ref{prop:new_d1_r}, we have $\mathcal{L}_1(LB^{\#}_{3V}) = LB^{\#}_{3V}$ and thus $LB^{\#}_1 \leq LB^{\#}_{3V}$.
\end{proof}

\begin{lemma} \label{LB1_geq_LB3V}
	$LB^{\#}_1 \geq LB^{\#}_{3V}$.	
\end{lemma}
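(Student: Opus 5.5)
We aim to show that the fixed point of the (F1)-lifting is also a fixed point of the (F3-V)-lifting. That is, we show $\mathcal{L}_{3V}(LB^{\#}_1) = LB^{\#}_1$, which gives $LB^{\#}_{3V} \leq LB^{\#}_1$ by the definition of $LB^{\#}_{3V}$. Let $LB = LB^{\#}_1$ and take an optimal solution $(\bar{x},\bar{y},\bar{z})$ of \myref{LF1}($LB$) with $\bar{z}=LB$; it exists because $\mathcal{L}_1(LB)=LB$. We will build a feasible solution $(\tilde{x},\tilde{y},\tilde{z})$ of \myref{LF3-V}($LB$) with $\tilde{y}=\bar{y}$ and $\tilde{z}=LB$. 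Proposition~\ref{prop:lift_d3_v_r} then gives $\mathcal{L}_{3V}(LB)=LB$.

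The construction is done customer by customer, in the same way as in the proof of Lemma~\ref{lem:d1>=d3-v}. Fix $i\in I$ and consider the LP
\[
\min\Big\{\sum_{A\in\mathcal{J}^\alpha}\max\{LB,d_{iA}\}\,x_{iA} : x_i\in P_i\Big\},
\]
where $P_i$ is the polytope \eqref{poly1}--\eqref{poly3} built from $\bar{x}_{ij}$. Lemma~\ref{lem:d1>=d3-v} already shows that $P_i$ is non-empty; its argument uses only \eqref{pac1lb:sumx} and nonnegativity, which hold here too. Since $P_i$ is bounded, this LP has an optimal solution $\tilde{x}_i$. If its value is at most $LB$, then \eqref{pac3vlb:l_opt} holds with $\tilde{z}=LB$. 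Constraints \eqref{pac3vlb:sumx} and \eqref{pac3vlb:xbin} are immediate. Constraint \eqref{pac3vlb:valid} follows from $\sum_{A\ni j}\tilde{x}_{iA}\leq\bar{x}_{ij}\leq\bar{y}_j$, and the $y$-constraints carry over from \myref{LF1}($LB$).

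The main step is bounding this LP value by $LB$, and we do it via duality. The dual reads
\[
\max\Big\{u-\sum_{j\in J}\bar{x}_{ij}v_j : u-\sum_{j\in A}v_j\leq \max\{LB,d_{iA}\}\ \forall A\in\mathcal{J}^\alpha,\ v\geq 0\Big\}.
\]
Given a dual feasible $(u,v)$, define $w_{ij}=\frac{u}{\alpha}-v_j$. Then $\sum_{j\in A}w_{ij}=u-\sum_{j\in A}v_j\leq\max\{LB,d_{iA}\}$ for all $A$, so $w_i\in S_i(LB)$. The lifted constraint \eqref{pac1lb:lift} together with \eqref{pac1lb:sumx} gives
\[
LB=\bar{z}\geq\sum_j w_{ij}\bar{x}_{ij}=\frac{u}{\alpha}\cdot\alpha-\sum_j v_j\bar{x}_{ij}=u-\sum_j\bar{x}_{ij}v_j .
\]
Hence every dual objective value is at most $LB$. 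By strong duality (the primal is feasible and bounded), the primal optimum is at most $LB$, which completes the construction.

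The obstacle I expect is exactly this duality step, namely recognizing that the dual variables can be packed into a vector $w_i\in S_i(LB)$. The shift by $u/\alpha$ works only because $\sum_j\bar{x}_{ij}=\alpha$. The remaining care points are minor: the minimum in the definition of $LB^{\#}_1$ must be attained, as the paper assumes; and feasibility of $P_i$ together with the transfer of the $y$-constraints follow as in Lemma~\ref{lem:d1>=d3-v}.
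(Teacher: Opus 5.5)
Your proof is correct and is essentially the paper's argument. The paper also dualizes a per-customer decomposition LP and turns dual solutions into vectors $w_i\in S_i(LB^{\#}_1)$ through the shift by $u_i/\alpha$, which relies on $\sum_j\bar{x}_{ij}=\alpha$. It starts from the $w$-side with free $v$, which gives the equality constraints $\sum_{A\ni j}x_{iA}=\bar{x}_{ij}$. You instead start from $P_i$ with inequalities and $v\geq 0$.

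One small correction: the non-emptiness argument of Lemma~\ref{lem:d1>=d3-v} also needs $\bar{x}_{ij}\leq 1$, not only \eqref{pac1lb:sumx} and nonnegativity. That bound does hold here by \eqref{pac1lb:xy} and \eqref{pac1lb:ybin}. In fact you can skip this step entirely: your dual is feasible (take $v=0$ and $u$ sufficiently negative) and bounded above by $LB$, so strong duality already yields a primal optimum.
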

\begin{proof}
	Let $(\bar{x},\bar{y},\bar{z})$ be an optimal solution of 
	\myref{LF1}($LB^{\#}_1$), so $\bar{z} = LB^{\#}_1$  holds because 
	$\mathcal{L}_1(LB^{\#}_1) = LB^{\#}_1$ by definition.	
	First, consider an arbitrary but fixed $i\in I$.
	Then, because of~\eqref{pac1lb:lift}, we have that 
	\begin{subequations}
					\label{eq:opt_w}
	\begin{alignat}{3}
		\bar{z} \geq \max~
		 &\sum_{j\in J} w_{ij} \bar{x}_{ij}\\ 
		 \st~ &\sum_{j\in A} w_{ij} \leq \max \{LB^{\#}_1, d_{iA}\} \qquad && \forall A\in \mathcal{J}^\alpha
	\end{alignat}
	\end{subequations}	
	holds.
	This
	is equivalent to
		\begin{subequations}
						\label{eq:opt_uv}
	\begin{alignat}{3}
\bar{z} \geq \max~ &u_i + \sum_{j\in J} v_{ij} \bar{x}_{ij}\\ 
\st~ &u_i + \sum_{j\in A} v_{ij} \leq \max \{LB^{\#}_1, d_{iA}\} \qquad && \forall A\in \mathcal{J}^\alpha,		\label{eq:opt_uv_con_max}
	\end{alignat}
	\end{subequations}	
	because each feasible solution $(\hat{u}_i,\hat{v}_{i})$ of the LP on the 
	right-hand side of~\eqref{eq:opt_uv} can be transformed into a feasible solution 
	$\hat{w}_i$ of the LP on the right-hand side of~\eqref{eq:opt_w} with 
	$\hat{w}_{ij} = \frac{1}{\alpha}\hat{u}_i + \hat{v}_{ij}$ for all $j\in J$ with the same objective function value
	\begin{align*}
		\sum_{j\in J} \hat{w}_{ij} \bar{x}_{ij} = 
		\frac{1}{\alpha}\hat{u}_i \sum_{j\in J} \bar{x}_{ij}+  \sum_{j\in J} \hat{v}_{ij} \bar{x}_{ij}=
		\hat{u}_i + \sum_{j\in J} \hat{v}_{ij} \bar{x}_{ij}
	\end{align*}
	because of~\eqref{pac1lb:sumx}. Furthermore, each feasible solution $\tilde{w}_i$ 
	of the LP in~\eqref{eq:opt_w} can be transformed into a feasible solution 
	$(\tilde{u}_i, \tilde{v}_i)$ of the LP in~\eqref{eq:opt_uv} with 
	$\tilde{u}_i = 0$ and $\tilde{v}_{ij} = \tilde{w}_{ij}$ for all $j\in J$, 
	again with the same objective function value. 
	Thus, the LPs in~\eqref{eq:opt_w} and~\eqref{eq:opt_uv} are indeed equivalent.
	
	The LP in~\eqref{eq:opt_uv} is feasible, because $u'_i = 0$ and 
	$v'_{ij} = \frac{1}{\alpha}LB^{\#}_1$ for all $j\in J$ is a feasible solution and clearly it is bounded by $\bar{z}$. 
	\iffalse	
	Furthermore, 
	it is bounded due to analogous arguments as the ones in the proof of 
	Lemma~\ref{lem:d1>=d3-v} for the boundedness of ($D^i$).  In particular, for a 
	feasible solution $(\hat{u}_i,\hat{v}_{i})$ of the LP in~\eqref{eq:opt_uv} we let 
	$\hat{A} \in \arg\max \left\{\sum_{j\in A} \hat{v}_{ij}: A\in \mathcal{J}^\alpha \right\}$ 
	and let $\hat{x}_{i} = (\hat{x}_{ij})_{j\in J}$ with $\hat{x}_{ij} = 1$ for 
	all~$j\in \hat{A}$ and~$\hat{x}_{ij} = 0$ for all $j\in J\setminus \hat{A}$. 
	Hence, $\sum_{j\in J} \hat{x}_{ij} = \alpha$ and
	therefore $\sum_{j\in \hat{A}} \hat{v}_{ij} = 
	\sum_{j\in J} \hat{x}_{ij} \hat{v}_{ij} 
	\geq \sum_{j\in J} \bar{x}_{ij} \hat{v}_{ij}$ holds, because on the left and 
	right hand-side of the inequality the weight~$\alpha$ is distributed to the 
	largest~$\hat{v}_{ij}$ and any~$\hat{v}_{ij}$, respectively.
	Then, $\max\{LB^{\#}_1, d_{iA}\} \geq \hat{u}_i + \sum_{j\in \hat{A}} \hat{v}_{ij} \geq \hat{u}_i + \sum_{j\in J} \bar{x}_{ij}\hat{v}_{ij}$ holds by~\eqref{eq:opt_uv_con_max} 
	and thus the LP in~\eqref{eq:opt_uv} is bounded by $\max_{i'\in I}\max_{A \in \mathcal{J}^\alpha} \{ d_{i'A} \}$.
	\fi
	As a consequence, the LP in~\eqref{eq:opt_uv} has an optimal solution, and the optimal objective function value of it and its dual LP coincide by strong duality. When introducing dual variables $x_{iA}$ for the LP in~\eqref{eq:opt_uv} this boils down to 		
	\begin{subequations}
		\label{eq:opt_uv_dual}
		\begin{alignat}{3}
		\bar{z} \geq \min~  \sum_{A\in \mathcal{J}^\alpha } \max \{&LB^{\#}_1, d_{iA}\} x_{iA}\\ 
	\st~ \sum_{A\in \mathcal{J}^\alpha} x_{iA} &= 1 \\		
		\sum_{\substack{A\in \mathcal{J}^\alpha:j\in A}} x_{iA} &= \bar{x}_{ij}
\qquad &&\forall j\in J \label{eq:opt_uv_dual:sumxiA} \\
			 x_{iA} &\geq 0 \qquad&& \forall A\in \mathcal{J}^\alpha.
	\end{alignat}
	\end{subequations}	
	Now let $\tilde{x}_i = (\tilde{x}_{iA})_{A\in \mathcal{J}^\alpha}$ be an optimal solution of the optimization problem in~\eqref{eq:opt_uv_dual}
	and let $\tilde{x} = (\tilde{x}_i)_{i \in I}$. Furthermore, let $\tilde{y} = \bar{y}$ and $\tilde{z} = \bar{z} = LB^{\#}_1$. Then  $(\tilde{x},\tilde{y},\tilde{z})$ is a feasible solution of \myref{F3-V}($LB^{\#}_{1}$), because 	
	\eqref{pac3vlb:valid} follows from~\eqref{eq:opt_uv_dual:sumxiA} and~\eqref{pac1lb:xy},
	\eqref{pac3vlb:l_opt} is a consequence of~\eqref{eq:opt_uv_dual},
	and all of 
\eqref{pac3vlb:sumy},
\eqref{pac3vlb:sumx} and 
\eqref{pac3vlb:xbin}-\eqref{pac3vlb:zbin} are easy to see. 	
	Therefore, $\mathcal{L}_{3V}(LB^{\#}_{1}) \leq LB^{\#}_{1}$. Since $\mathcal{L}_{3V}(LB^{\#}_{1}) \geq LB^{\#}_{1}$ by Proposition~\ref{prop:lift_d3_v_r}, we have $\mathcal{L}_{3V}(LB^{\#}_{1}) = LB^{\#}_{1}$ and thus $LB^{\#}_{3V} \leq LB^{\#}_{1}$.
\end{proof}

We close this section by the following theorem stating that the two best lower 
bounds based on our lifted inequalities for~\myref{F1} 
and~\myref{F3-V} coincide and are better than the best lower bound based 
on our 
lifted inequalities for~\myref{F3}.

\begin{theorem}
	$LB^{\#}_1 = LB^{\#}_{3V} \geq LB^{\#}_{3}$.
\end{theorem}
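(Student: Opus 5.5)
This theorem is a direct combination of results established immediately before it, so the plan is simply to chain them together. For the equality $LB^{\#}_1 = LB^{\#}_{3V}$ we invoke Lemma~\ref{LB1_leq_LB3V}, which gives $LB^{\#}_1 \leq LB^{\#}_{3V}$, and Lemma~\ref{LB1_geq_LB3V}, which gives $LB^{\#}_1 \geq LB^{\#}_{3V}$. Antisymmetry of $\leq$ on $\mathbb{R}$ then yields the equality. For the inequality $LB^{\#}_{3V} \geq LB^{\#}_{3}$ we use Theorem~\ref{theo:lift_bounds_d3} directly.

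The only point requiring care is that all three quantities are well-defined, i.e., that the minima in the definitions of $LB^{\#}_1$, $LB^{\#}_{3}$ and $LB^{\#}_{3V}$ are attained. We would take this as settled by the discussion following Theorems~\ref{theo:d3_lift_conv} and~\ref{theo:d3v_lift_conv}. There, the set of fixed points is shown to be upward closed. The proof of Theorem~\ref{theo:polytime_LB3} further shows that the minimum lies in the finite set $D^\alpha$. The analogous statement for $LB^{\#}_{3V}$ holds by the same argument.

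For $LB^{\#}_1$, we expect existence to be the only mildly delicate step, and we would handle it as follows. The proofs of Lemmata~\ref{LB1_leq_LB3V} and~\ref{LB1_geq_LB3V} actually establish a stronger, pointwise fact. Every fixed point of $\mathcal{L}_{3V}$ is a fixed point of $\mathcal{L}_1$, and conversely. Indeed, those proofs only use $\mathcal{L}_{3V}(LB)=LB$, respectively $\mathcal{L}_1(LB)=LB$, at the given value of $LB$. Hence the two fixed-point sets coincide, so $LB^{\#}_1$ exists and equals $LB^{\#}_{3V}$. Combining this with Theorem~\ref{theo:lift_bounds_d3} gives $LB^{\#}_1 = LB^{\#}_{3V} \geq LB^{\#}_{3}$.
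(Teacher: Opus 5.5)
Your proposal is correct and matches the paper's proof, which is exactly the one-line combination of Lemma~\ref{LB1_leq_LB3V}, Lemma~\ref{LB1_geq_LB3V} and Theorem~\ref{theo:lift_bounds_d3}. You also observe that those two lemma proofs work pointwise in $LB$, so the fixed-point sets of $\mathcal{L}_1$ and $\mathcal{L}_{3V}$ coincide and the minimum defining $LB^{\#}_1$ is attained; this is a valid refinement that the paper leaves implicit.
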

\begin{proof}
	This is a direct consequence of Lemma~\ref{LB1_leq_LB3V}, Lemma~\ref{LB1_geq_LB3V} and Theorem~\ref{theo:lift_bounds_d3}.
\end{proof}

\section{Solution algorithm}\label{sec:implementation}

In this section, we describe our solution algorithm for the \paCP, which is a 
branch-and-cut algorithm (\BC) based on our MIP formulation~\myref{F1}. 
We use CPLEX as MIP solver and enhance it with various ingredients such as 
variable fixing, optimality-preserving inequalities and lifted inequalities, as 
well as branching priorities. Implementation details of these enhancements, 
including the separation scheme for the inequalities, are provided in Section 
\ref{sec:b_and_c}. Moreover, we also use a starting heuristic and a primal 
heuristic, which are described in Section \ref{sec:heuristics}. In our 
computational study in Section~\ref{sec:computational} we investigate various 
settings that combine these ingredients. Note, that we also did some 
initial experiments with a solution algorithm based on our MIP formulation~\myref{F3-V}, 
however due to the exponential number of variables, the performance of this 
algorithm was quite poor even for small-scale instances, thus for the sake of 
brevity we do not discuss such an approach further in this work.

\subsection{Branch-and-cut algorithm}\label{sec:b_and_c}

To implement our enhancements, we use the \texttt{UserCutCallback} of CPLEX, 
which is called after fractional solutions are obtained when solving the 
LP-relaxation of a node of the \BC tree,   
and the \texttt{LazyCutCallback} of CPLEX, which is called for each integer 
solution that is found during the \BC\ (i.e., integer solutions of 
LP-relaxations and solutions obtained by the internal heuristics of CPLEX).
In the following, we describe the details of our variable fixing, separation schemes and the used branching priorities. 

\paragraph{Initial variable fixing}
At the initialization, we fix assignment variables~$x_{ij} = 0$ 
based on the remoteness equalities~\eqref{pac1:optpres_farthest} in any case 
and based 
on the upper bound equalities~\eqref{pac1:optpres2} if an upper bound~$UB$ on 
$\nu(\text{\paCP})$ is known. 
We also omit the linking constraints~\eqref{pac1:xy} 
for all~$i\in I$ and $j\in J$ for which~$x_{ij} = 0$ was fixed, as they are redundant.

\paragraph{Overall separation scheme}
We separate (in)equalities within the \texttt{UserCutCallback} in 
four steps in the order 
\begin{enumerate}[itemsep=0cm]
	\item Upper bound equalities~\eqref{pac1:optpres2} \label{cb1}
	\item Linking constraints~\eqref{pac1:xy} \label{cb2}
	\item Simple upper bound inequalities~\eqref{pac1:optpres3} 
	and general upper bound inequalities~\eqref{pac1:optpres_all} 
	\label{cb3} 
	\item Extended lifted inequalities~\eqref{pac1:lift_ineq_opt},
	\label{cb4}
\end{enumerate}
and in settings where we separate the linking 
constraints~\eqref{pac1:xy}, we additionally separate~\eqref{pac1:xy} within the \texttt{LazyConstraintCallback}, as they 
are necessary for the correct modeling of the \paCP.

In each iteration of the \texttt{UserCutCallback}, we ensure that at most one 
inequality is added per customer~$i\in I$ by ignoring this customer for 
the subsequent separation procedures in this round after an 
(in)equality involving $i$ was added, i.e., no inequalities involving $i$ are 
considered in subsequent separation procedures in this round.
In order to achieve a high node-throughput within the \BC tree, we separate the 
extended lifted inequalities~\eqref{pac1:lift_ineq_opt} only at the root 
node, as they have the most time consuming separation procedure. We also 
restrict the total number of linking constraints, simple upper 
bound inequalities and general upper bound inequalities added in one separation 
round 
in a non-root 
node of the \BC tree to at most \texttt{maxNumCutsTree}.

To allow for tailing-off control of our separation scheme, we restrict the number of separation rounds in each node by \texttt{maxNumSepRoot} if it is the root node, and \texttt{maxNumSepTree} if it is another node in the \BC tree. In addition, we also prohibit further separation rounds within a node, if the lower bound is not improved by more than 0.0001 for~\texttt{maxNoImprovements} consecutive separation rounds.

The following paragraphs specify the separation schemes for the different (in)equality types further.

\paragraph{Separation step \ref{cb1} (Upper bound equalities 
\eqref{pac1:optpres2})}

Let $UB$ be the currently best upper bound.  
If a new improved upper bound~$UB^\prime < UB$ is found during the \BC, we 
separate the upper bound equalities~\eqref{pac1:optpres2} for this new 
bound~$UB^\prime$  by enumeration in the subsequent \texttt{UserCutCallback}.

\paragraph{Separation step \ref{cb2} (Linking constraints~\eqref{pac1:xy})}

In settings where we separate the linking 
constraints~\eqref{pac1:xy}, we do not add all linking constraints at 
the 
initialization, but only add a fixed 
number, specified by the parameter 
\texttt{numInitialCuts}, of them for each customer~$i\in I$.
We do that by 
ordering the potential facility locations~$j\in J$ according to their 
distance to~$i$ in 
increasing order for each~$i\in I$ and adding the linking 
constraint~\eqref{pac1:xy} for the first \texttt{numInitialCuts} facilities 
in that order, i.e., for the \texttt{numInitialCuts} closest facilities 
to~$i$.
For separating~\eqref{pac1:xy} within the \texttt{UserCutCallback} and 
\texttt{LazyConstraintCallback},\sloppy~ we go through all~$i\in I$ in random 
order. 
For each~$i\in I$, we again order the facilities~$j\in J$ by their distance to~$i$ 
in increasing order. Finally, we add~\eqref{pac1:xy} for the first facility for 
which the current solution violates it.

\paragraph{Separation step \ref{cb3} (Simple upper bound 
inequalities~\eqref{pac1:optpres3} 
	and general upper bound inequalities~\eqref{pac1:optpres_all})}
Let $UB$ be the currently best upper bound. 
If a new improved upper bound~$UB^\prime < UB$ is found during the \BC, we 
separate the simple upper bound 
inequalities~\eqref{pac1:optpres3} and the general upper bound 
inequalities~\eqref{pac1:optpres_all} for this new 
bound~$UB^\prime$ in the subsequent \texttt{UserCutCallback}. This separation  
is done by enumeration.
We start with separating~\eqref{pac1:optpres3} (which are a special case 
of~\eqref{pac1:optpres_all}) for~$\beta \in \{2, \ldots, \alpha\}$, as the 
condition for these inequalities is easier to check. The separation of~\eqref{pac1:optpres_all} 
for~$\beta \in \{2, \ldots, \alpha\}$ is then done afterwards, which means it is 
called less often, because, as described above, we do not consider inequalities 
involving a customer~$i$ for which we have already added any violated inequality 
in this round, and we additionally have the limit \texttt{maxNumCutsTree} in 
non-root nodes. We do not consider~\eqref{pac1:optpres3} 
and~\eqref{pac1:optpres_all} for~$\beta = 1$ because those inequalities correspond 
to the previously separated upper bound 
equalities~\eqref{pac1:optpres2}.  

\paragraph{Separation step \ref{cb4} (Extended lifted 
inequalities~\eqref{pac1:lift_ineq_opt})}
Let~$(x^*, y^*, z^*)$ be the current fractional solution of the relaxation,~$UB$ be the current upper bound, and~$LB'$ be the current lower bound on~$\nu(\text{\paCP})$.
Before separating the extended lifted 
inequalities~\eqref{pac1:lift_ineq_opt}, we improve the lower 
bound~$LB'$ by increasing it to the next $\alpha$-distance in $D^\alpha$, 
denoted by~$LB$, as described in the proof of Theorem~\ref{theo:polytime_LB3} and 
add the inequality~$z\geq LB$ to the model. 
For a given~$i\in I$, we then separate the extended lifted 
inequalities~\eqref{pac1:lift_ineq_opt} as follows. 
First, we restrict the set $S_i(LB,UB)$ 
to the non-negative vectors.
Next, for a given $x_i^*$, we consider the support $\supp(x_i^*)=\{j \in J: 
x_{ij}>0\}$. 
We observe that for all~$j \in J \setminus \supp(x_i^*)$ the variable~$w_{ij}$ 
does not contribute to the sum $\sum_{j\in J} w_{ij} x_{ij}^*$ on the 
left-hand side of~\eqref{pac1:lift_ineq_opt} and can therefore be omitted in the 
separation. We thus consider the restricted set
{\fontsize{9.7pt}{9.7pt}\selectfont
\begin{align*}
	S^+_i(LB,UB) = \left\{ w_i\in \mathbb{R}^{|\supp(x^*_i)|}_{\geq 0}: \sum_{j\in A \cap \supp(x^*_i)} w_{ij} \leq \max\{LB, d_{iA}\}
	\forall A\in \mathcal{J}^\alpha: d_{iA} \leq UB, A\cap \supp(x^*_i)\neq \emptyset \right\}
\end{align*}
}
for separation.
The separation problem for a customer $i \in I$ consists of solving the LP 
\begin{equation}
\max_{w_i \in S_i^+(LB,UB) } \quad \sum_{j\in \supp(x_i^*)} w_{ij} x_{ij}^*  \label{wi:obj}.
\end{equation}

Note, that in general, the LP~\eqref{wi:obj} can be unbounded if there is a 
facility~$j\in \supp(x^*_i)$ such that $d_{iA} > UB$ for all $A\in 
\mathcal{J}^\alpha$ with $j\in A$. 
However, 
such a facility~$j$ also fulfills the condition of the variable 
fixing equalities~\eqref{pac1:optpres2} in separation step~\ref{cb1}. We 
thus have already added a variable fixing 
equality~\eqref{pac1:optpres2} involving
customer $i$ in this separation round in step~\ref{cb1} and do not 
consider it here anymore.
We use the primal simplex algorithm of CPLEX to solve \eqref{wi:obj}, as this turned out to be beneficial in preliminary computations.

An optimal solution $w_i^*$ of \eqref{wi:obj} gives an 
inequality~\eqref{pac1:lift_ineq_opt} with maximum left-hand-side for the current 
$x^*_i$ for any value of  $w_{ij}$ for the remaining $j \in J \setminus 
\supp(x^*_i)$. So if $\sum_{j \in \supp(x^*_i)} w^*_{ij} x^*_{ij}\leq 
z^*$ holds, we do not obtain a violated inequality~\eqref{pac1:lift_ineq_opt} 
for this customer~$i$ and hence we continue the separation 
with the next customer. However, if $\sum_{j \in \supp(x^*_i)} w^*_{ij} 
x^*_{ij}> z^*$ holds, the inequality~\eqref{pac1:lift_ineq_opt} induced by $w^*_i$ 
is violated by the current solution $x^*$ for any completion of $w_i^*$ and 
we thus can add it. In particular, given such an $w^*_i$, we construct a 
coefficient vector $\bar w_i$ for~\eqref{pac1:lift_ineq_opt}, which we 
initialize with $\bar w_{ij}=w^*_{ij}$ for $j \in \supp(x_i^*)$ and $\bar 
w_{ij}=0$ for $j \in J \setminus \supp(x_i^*)$. We then try to increase the values 
of $\bar w_{ij}$ for $j \in J \setminus \supp(x^*_i)$ in a greedy fashion as 
follows:
Let $r=|J\setminus \supp(x_i^*)|$ and let $j_1$, \dots, $j_r$ be the 
facilities in $J\setminus \supp(x_i^*)$ sorted by their distance to~$i$, 
so~$J\setminus \supp(x_i^*) = \{j_1, \ldots, j_r\}$ with~$d_{ij_1} \leq \ldots 
\leq d_{ij_r}$. Then, we recursively update~$\bar{w}_{ij_\ell}$ starting 
with~$\ell = 1$ and setting 
\[
\bar{w}_{ij_\ell} = \min_{\substack{B\subseteq J\\ |B| = \alpha-1}} \max \{LB, 
d_{iB} + d_{ij_\ell}\} - \sum_{j'\in B} \bar{w}_{ij'}.
\]
The idea behind this approach is that large values for these remaining 
entries of $\bar w_{ij_\ell}$ result in stronger inequalities, thus we set them as large as possible such that the resulting $\bar{w}_i$ still lies in~$S^+_i(LB,UB)$.
Finally, we add the inequality~$\sum_{j\in J} \bar{w}_{ij} x_{ij} \leq z$ to the 
model using the CPLEX option \texttt{purgeable}, as it may become redundant in 
later iterations considering better lower bounds, and with this option CPLEX can 
remove the inequality again, if it does not consider it useful.

As solving \eqref{wi:obj} can become time consuming for larger instances, we do 
not consider the separation of the lifted 
inequalities~\eqref{pac1:lift_ineq_opt} for all customers 
$i \in \cus$, but only for at most \texttt{numLiftedCustomers}. 
These customers are selected among all customers, for which no variable 
fixing, linking constraint or optimality-preserving inequality was added 
in steps~\ref{cb1} to~\ref{cb3} during the current separation round.
Let 
$I^{sep}$ be the set of these customers.
For each $i\in I^{sep}$ we calculate the following score:
We sort the potential facility locations by their distance to $i$, i.e. 
$J=\{d_{ij_1}, \ldots, d_{ij_m}\}$ with $d_{ij_1}\leq \ldots \leq d_{ij_m}$. 
Let $1\leq\ell\leq m$ be such that $\sum_{k<\ell} y^*_{ij_k} < \alpha$ 
and $\sum_{k\leq \ell} y^*_{ij_k} \geq \alpha$. Then, we define
\[
	\bar{x}_{ij_k} = \left\{ \begin{array}{ll}
		y^*_{j_k} & \textrm{ if } k < \ell \\
		\alpha - \sum_{k < \ell} y^*_{j_k} & \textrm{ if } k = \ell \\
		0 & \textrm{ if } k \geq \ell,
	\end{array}\right.
\]
which represents the closest assignment from $i$ to potential facility 
locations under the current $y^*$. Note, that~$x^*$ may violate these closest 
assignment constraints.
We calculate~$\bar{x}_{ij}$ for all $i \in I^{sep}$ and $j \in J$ choose 
those (at most) \texttt{numLiftedCustomers}~$i\in I^{sep}$ that have the 
largest sums~$\sum_{j\in J} d_{ij} \bar{x}_{ij}$. Using~$\bar{x}_{ij}$ turned out 
to be beneficial in preliminary computations compared to calculating the score 
using $x^*$. A potential explanation for this could be that due to $x^*$ 
not necessarily fulfilling closest assignment constraints, 
directly focusing in the $y$-variables results in a better choice of customers to 
consider for separation, as a high score indicates that even assigning a 
customer to the nearest (fractionally) open facilities results in a large 
$\alpha$-distance for this customer.

\paragraph{Branching} 
The branching is done preferably on the~$y$-variables. This is implemented by setting the branching priorities of the $y$-variables to 100 and the branching priorities of the~$x$- and~$z$-variables to zero. 

\subsection{Heuristics}\label{sec:heuristics}

\paragraph{Starting heuristic}
In order to obtain a starting solution, we use a greedy starting heuristic. 
Let~$P$ be a (partial) solution of the \paCP. First, we randomly pick a 
location~$j\in J$ to open a facility, i.e.,~$P=\{j\}$. Next, we iteratively add 
facilities to~$P$ by the following criterion. For all~$i\in I$, we calculate 
the~$\alpha^\prime$-distance~$d_{\alpha^\prime}(i,P)$ from~$i$ to~$P$, 
where~$\alpha^\prime =\min\{\alpha, |P|\}$. We choose some $i'\in 
I\setminus P$ with the largest~$\alpha^\prime$-distance to~$P$ randomly, 
i.e., a random~$i' \in \arg \max\{d_{\alpha^\prime} (i,P): i\in I\setminus 
P\}$. 
If~$I=J$, we add~$i'$ to~$P$. If~$I\neq J$, we add the closest facility~$j\in 
J\setminus P$ to~$i'$ to~$P$. We stop, once we reach~$|P| = p$. This 
heuristic is run \texttt{numStartHeurRuns} times, with different random starting 
locations~$j\in J$. 

\paragraph{Local search}
We utilize a local search to further improve the solutions found by the 
starting heuristic. 
Let~$P$ be a feasible solution of the \paCP. We search for a better solution 
in the neighborhood~$N(P)$ of $P$ consisting of all solutions that differ 
from~$P$ in exactly one facility. In particular, we check for each open 
facility~$j \in P$ whether there is another facility~$j^\prime \in J\setminus P$ 
such that 
the solution $P^\prime = (P\setminus \{j\}) \cup \{j^\prime\}$ obtained by
replacing~$j$ by~$j^\prime$ in~$P$ improves the objective function value. 
To this end, we sort the customers~$i\in I$ in decreasing order of  
$d_\alpha(i,P)$, i.e., their current~$\alpha$-distance to~$P$. 
Note, that in this ordering, $d_\alpha(i_1,P)$ for the first customer $i_1$ is 
exactly the objective function value $f_\alpha(P)$ of the current 
solution~$P$.
We iterate through them, and 
as soon as a customer $i\in I$ with $d_\alpha(i,P')$ larger or 
equal to $f_\alpha(P)$ is found, we can reject the currently considered 
switch of $j$ and $j'$ and continue our search with a different solution 
in~$N(P)$, i.e., a different $j^\prime$. If no such $i$ exists, the 
solution~$P^\prime$ is a strictly better solution than~$P$, so we update~$P$ and 
set it to $P^\prime$. We then repeat the search with the new~$P$. 
We stop when we find a solution~$P$ that can not be improved by replacing any~$j 
\in P$ with any~$j'\in J\setminus P$. 

This local search heuristic is applied to all the solutions obtained by the starting heuristic. 

\paragraph{Primal heuristic}
We implement a primal heuristic within the \texttt{HeuristicCallback} of CPLEX. In 
this callback, we have the current LP solution $(x^*,y^*,z^*)$ available and 
construct a feasible solution using~$y^*$ by choosing the~$p$ facilities $j$
with the largest values~$y_j^*$ to open. Ties are broken by index.

\section{Computational study}\label{sec:computational}

We implemented our solution algorithm in C++ using CPLEX 22.1.1 as MIP solver. In 
all our runs, we set the branching priorities as described in 
Section~\ref{sec:b_and_c} and the \texttt{relativeMIPGap} parameter of CPLEX 
to~$0.0$.
All other CPLEX parameters are left at their default values. The runs were made on 
a single core of an Xeon E5-2630 machine with 8 cores, 2.4GHz and 64GB of RAM. We 
set a time limit of 1800 seconds.

In our solution algorithm, we use the parameters \texttt{numStartHeurRuns} 
= 10, \texttt{numInitialCuts} = 100, \texttt{maxNumSepRoot} = 150, 
\texttt{maxNoImprovements} = 5, \texttt{maxNumCutsTree} = 50, 
\texttt{maxNumSepTree} = 5, and \texttt{numLiftedCustomers} = 20. These values 
were determined with preliminary computations.

\subsection{Instances}\label{sec:inst_values}

In our computational study, we use the two sets of benchmark instances that were 
(partially) used for the~\pCP in 
\citet{elloumi2004,calik2013double,chen2009,contardo2019scalable} and 
\citet{gaar2022scaleable}, for the~\apCP in 
\citet{mousavi2023exploiting,gaar2023exact} and \citet{chagas2024parallel}, for 
the~\pnCP in \citet{albareda2015centers,lopez2019grasp,londe2021evolutionary} and 
\citet{ristic2023auxiliary}, and for the~\pSCP in \citet{ristic2023solving}.

The first instance set \pmed is based on the OR-library introduced by \citet{beasley1990or}.
We consider all~40 instances, each consisting of a graph and a 
number~$p$. In these instances, we have a single set of customer locations and 
potential facility 
locations, i.e.,~$I=J=V$, where~$V$ are the vertices 
of the graph. To obtain the distances between locations $i,j\in V$, the 
shortest path distance between $i$ and $j$ in the underlying graph 
is used.
For the instances in \pmed, all such distances 
are integer. 

The second instance set \tsplib was originally introduced for the traveling 
salesperson problem in \cite{reinelt1991tsplib}. Each \tsplib instance is 
given by a set of locations $V$ and we again have $I=J=V$.
 The names of the instances contain the 
number of locations defining this instance, e.g., instance 
\texttt{eil101} 
contains~$101$ locations. Each location is given as a 
two-dimensional 
coordinate and the 
distance between two locations is calculated as the Euclidean distance. 
We consider a subset of 8 instances of the \tsplib instances with $48$ up to $439$ locations.
We test~$p\in\{10, 20, 30, 40, 50, 60, 70, 80, 90, 
100\}$ but we do not use
 values of $p$
that are greater or equal to $10+|I|/2$. So we test $p\in 
\{10,20,30,40,50,60,70\}$ for \texttt{bier127} for example.

For both instances sets, we consider $\alpha=2$.

\subsection{Analysis of the branch-and-cut algorithms components}\label{sec:settings}

We analyze the different components of our \BC described in Section~\ref{sec:implementation} by comparing the settings
\begin{itemize}[noitemsep]
	\item \texttt{1}, i.e., directly solving~\myref{F1} without any of our enhancements,
	\item \texttt{1H}, i.e., solving~\myref{F1} with all our heuristics
	as well as the variable fixing based on upper bound 
	equalities~\eqref{pac1:optpres2} 
	and remoteness equalities~\eqref{pac1:optpres_farthest} (step~\ref{cb1} 
	of the separation 
	scheme), 
	\item \texttt{1HS}, i.e., setting~\texttt{1H} with the separation of the 
	linking constraints~\eqref{pac1:xy} (up to step~\ref{cb2} of the 
	separation scheme),
	\item \texttt{1HSL}, i.e., setting~\texttt{1HS} with separation of the 
	simple upper bound inequalities~\eqref{pac1:optpres3} 
	and general upper bound inequalities~\eqref{pac1:optpres_all}, and the 
	extended lifted 
	inequalities~\eqref{pac1:lift_ineq_opt} (all steps of the separation 
	scheme).
\end{itemize}

Figure~\ref{fig:plot_pmed} shows cumulative runtime and optimality 
gap plots 
for the \pmed instances. The optimality 
gap for one run is defined as $(UB-LB)/UB$
where $UB$ and $LB$ are the best upper and lower 
bound obtained with this run, respectively. Figure~\ref{fig:plot_tsp}
shows
the same for the \tsplib instances.

\begin{figure}[!ht]
	\centering
	\begin{subfigure}{0.45\textwidth}
		\includegraphics[width=\linewidth]{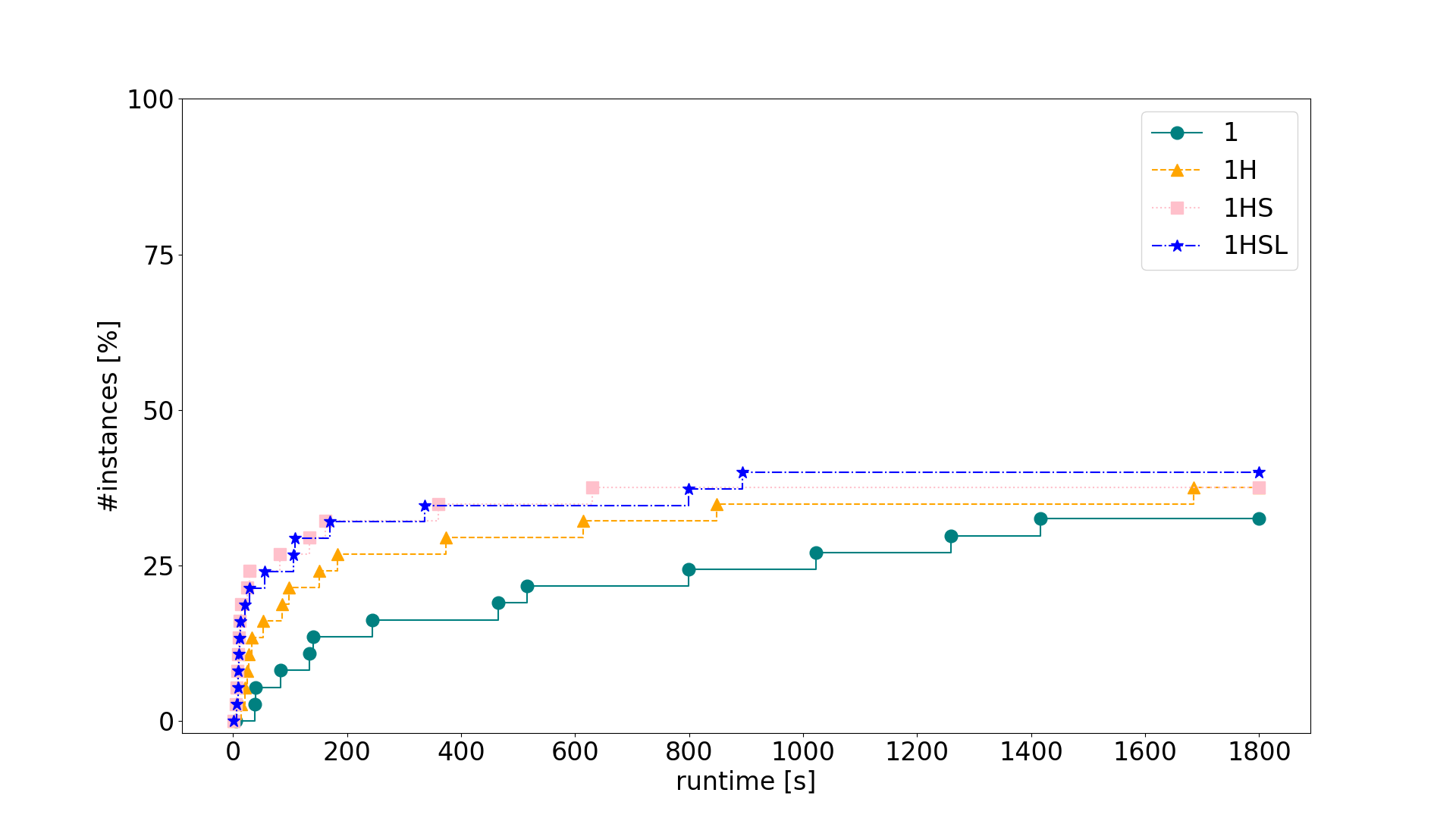}
		\caption{runtime for the \pmed instances}
		\label{fig:run_pmed}
	\end{subfigure}
	\begin{subfigure}{0.45\textwidth}
		\centering
		\includegraphics[width=\linewidth]{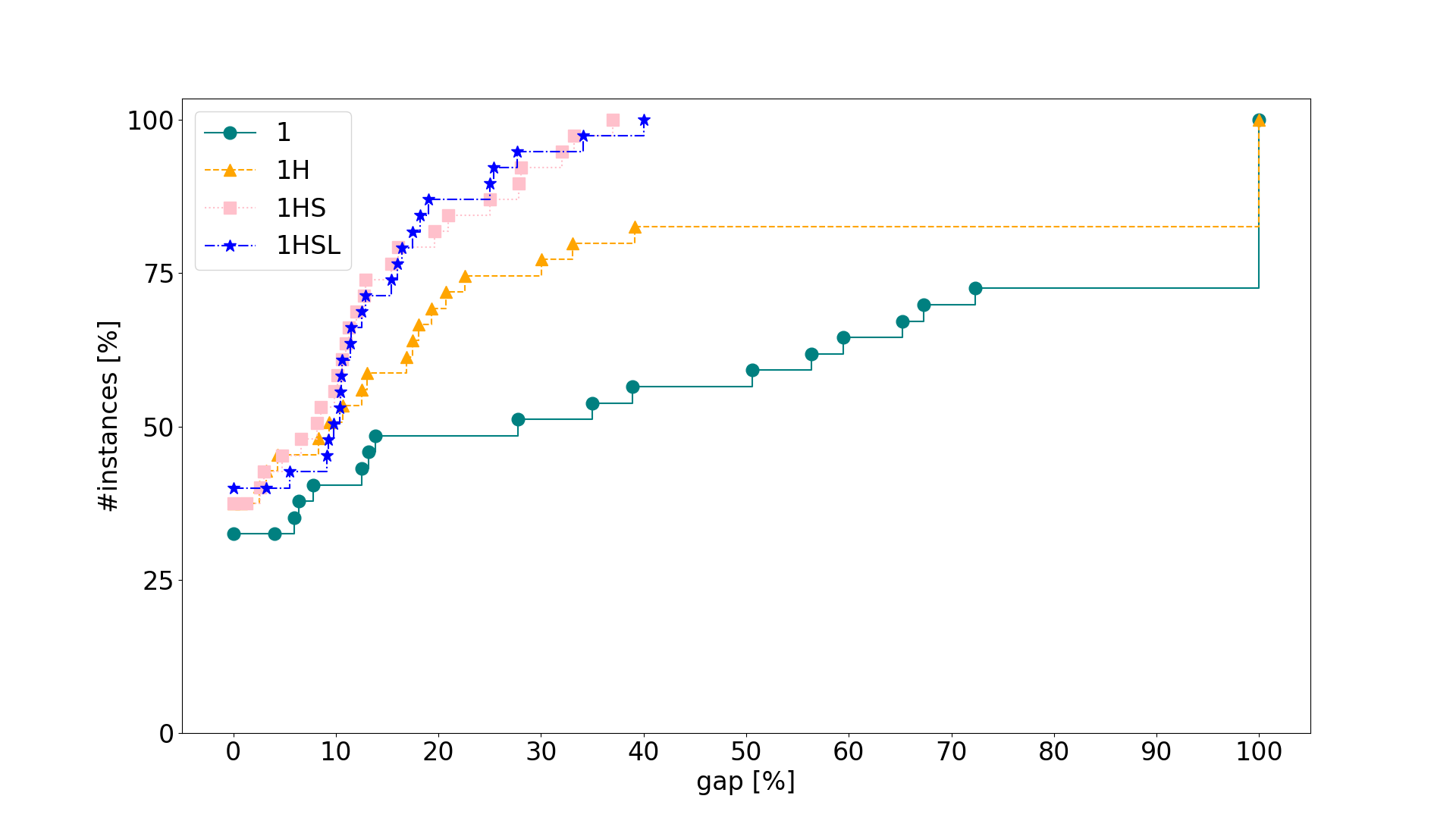}
		\caption{optimality gap for the \pmed instances}
		\label{fig:gap_pmed}
	\end{subfigure}
	\caption{Comparison of our \BC settings for the \pmed instances}
	\label{fig:plot_pmed}
\end{figure}

\begin{figure}[!ht]
	\centering
		\begin{subfigure}{0.45\textwidth}
		\includegraphics[width=\linewidth]{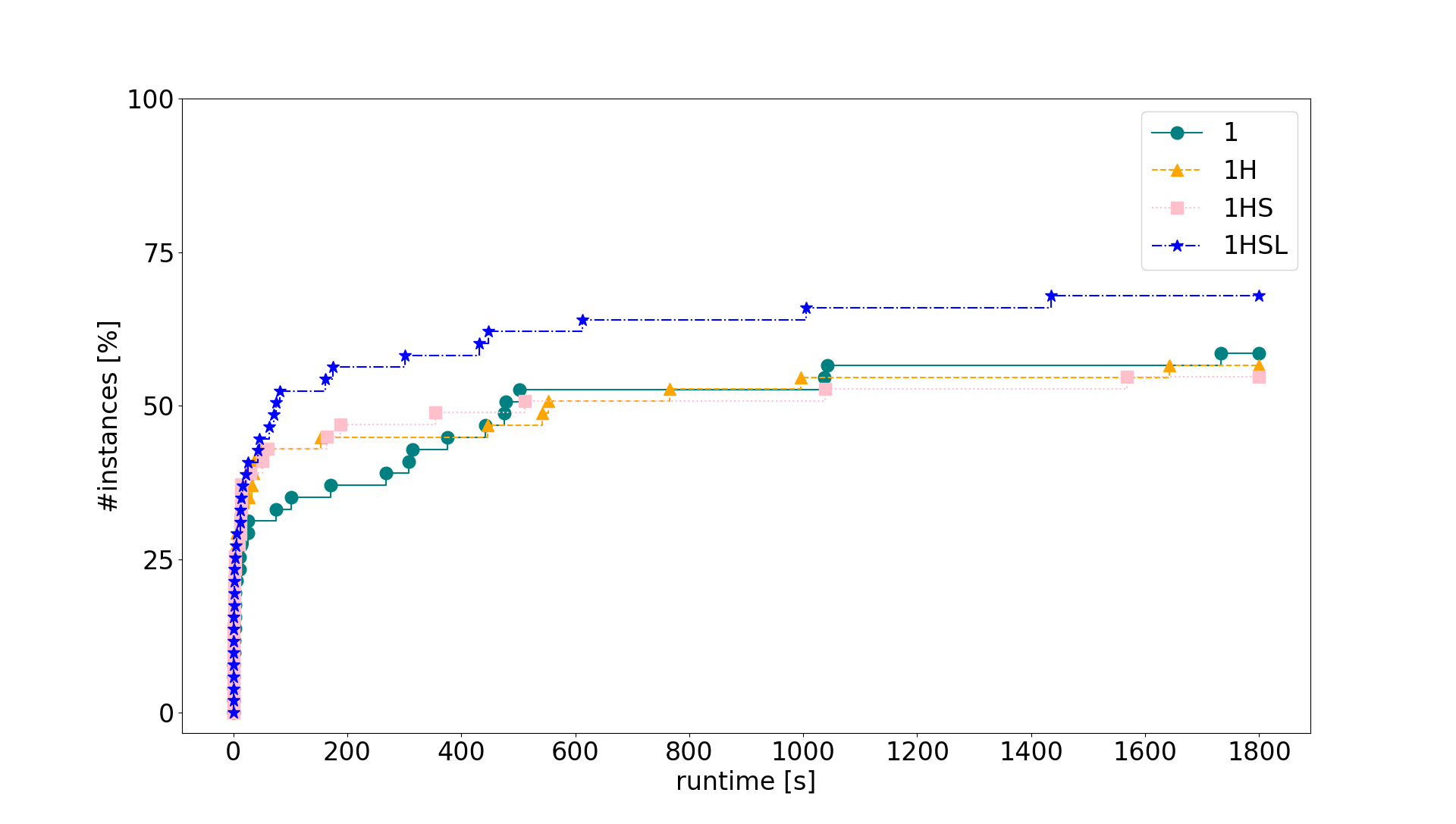}
		\caption{runtime for the \tsplib instances}
		\label{fig:run_tsp}
	\end{subfigure}
	\begin{subfigure}{0.45\textwidth}
		\centering
		\includegraphics[width=\linewidth]{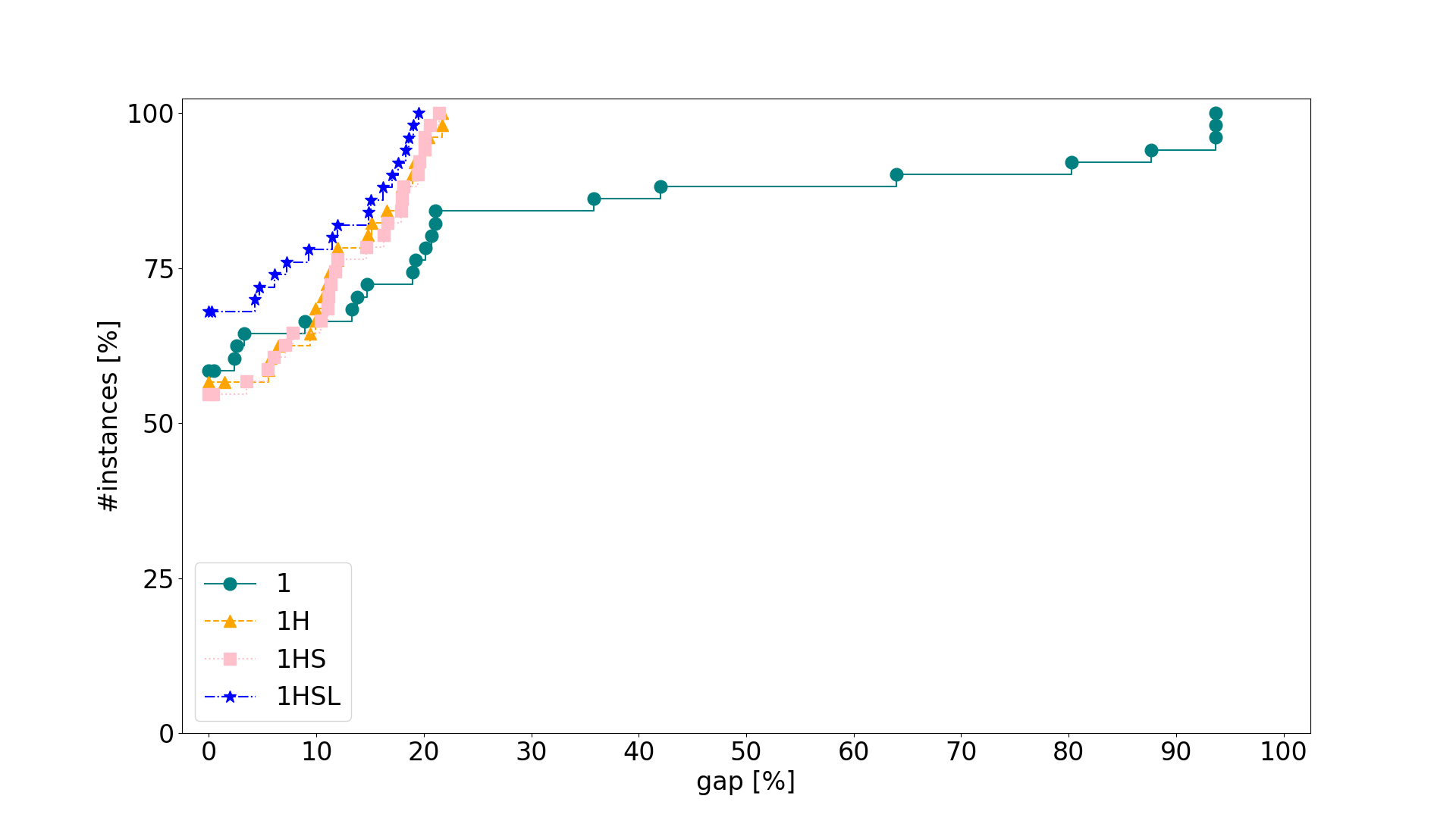}
		\caption{optimality gap for the \tsplib instances}
		\label{fig:gap_tsp}
	\end{subfigure}
	\caption{Comparison of our \BC settings for the \tsplib instances}
	\label{fig:plot_tsp}
\end{figure}

We see that for both instance sets, setting \texttt{1}, i.e., directly 
solving~\myref{F1} without any enhancement, has the worst performance with only 32.5\% of the \pmed instances and 58.5\% of the \tsplib instances solved 
to optimality within the imposed time limit, and 42.5\% of the \pmed 
instances and 11.3\% of the \tsplib instances with a remaining optimality gap of 
over 50\%. 

Adding heuristics and variable fixing with setting \texttt{1H} yields the 
highest improvement in the performance for the \pmed instances and some 
improvement for the \tsplib instances. This can be explained by the fact that 
fixing variables reduces the size of the MIP formulation which makes it 
easier to solve it. This effect is larger for instances with a higher 
reduction potential, i.e., instances with a high number of locations, of 
which the instance set \pmed contains more. 

For the \pmed instances, separating the linking constraints, i.e., setting 
\texttt{1HS}, has a positive effect. Unfortunately, this positive effect is 
neglectable for the \tsplib instances. 
This is likely due to the different sizes of the instances, as 
the instance set \pmed contains larger instances and thus not adding all the 
linking constraints at initialization but separating them has a bigger effect on 
the size of the LP-relaxations. 

Adding the optimality-preserving inequalities and the lifted inequalities, i.e., 
setting \texttt{1HSL}, has a limited effect for the \pmed instances but 
demonstrates a significant improvement for the \tsplib instances, where between 5 and 7 
additional 
instances could be solved with this setting compared to the others. This shows 
that these inequalities do not only strengthen formulation~\myref{F1} 
theoretically, 
but also improve the computational performance. The limited effect on the \pmed 
instances may be explained by the larger instance sizes in this set, which makes 
both solving the LP-relaxations and also solving the separation-LPs for the lifted 
inequalities more time consuming. 
Therefore, adding all enhancements yields the best overall performance with 40.0\% of the \pmed instances and 67.9\% of the \tsplib instances solved to optimality within the time limit and all \pmed instances solved to a 40\% optimality gap and all \tsplib instances solved to an optimality gap of at most 20\%.

In addition, we also consider the quality of root bounds, i.e., the lower 
bound at the end of the root node, for the different settings. 
To this end, we compare the root bound $RB$ of each instance and setting with the 
best upper bound $UB^*$ found for this instance by any setting. The root gap is 
then computed as $(UB^*-RB)/UB^*$. 
We note that the root bounds $RB$ in this calculation are taken from MIP-solving 
runs, in which CPLEX applies internal preprocessing and cut generating procedures.
In Figures~\ref{fig:root_pmed} and~\ref{fig:root_tsp} we 
provide plots of the 
cumulative root gaps for the instances sets \pmed and \tsplib, respectively.
There, we see that adding our 
optimality-preserving and lifted inequalities significantly improve the root 
bounds. 

\begin{figure}[!ht]
	\centering
		\begin{subfigure}{0.45\textwidth}
			\includegraphics[width=\linewidth]{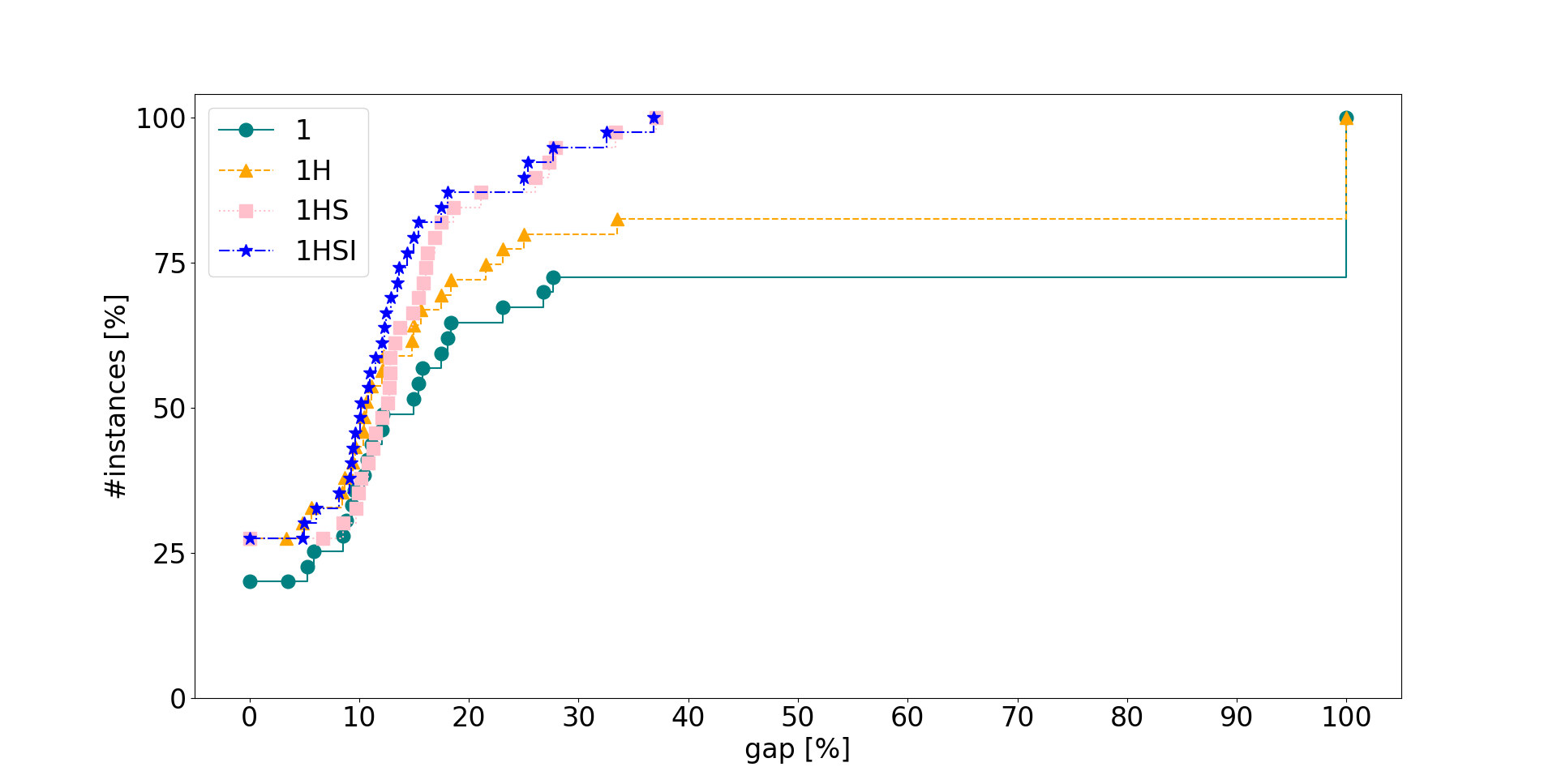}
			\caption{root gap for the \pmed instances}
			\label{fig:root_pmed}
		\end{subfigure}
	\begin{subfigure}{0.45\textwidth}
		\includegraphics[width=\linewidth]{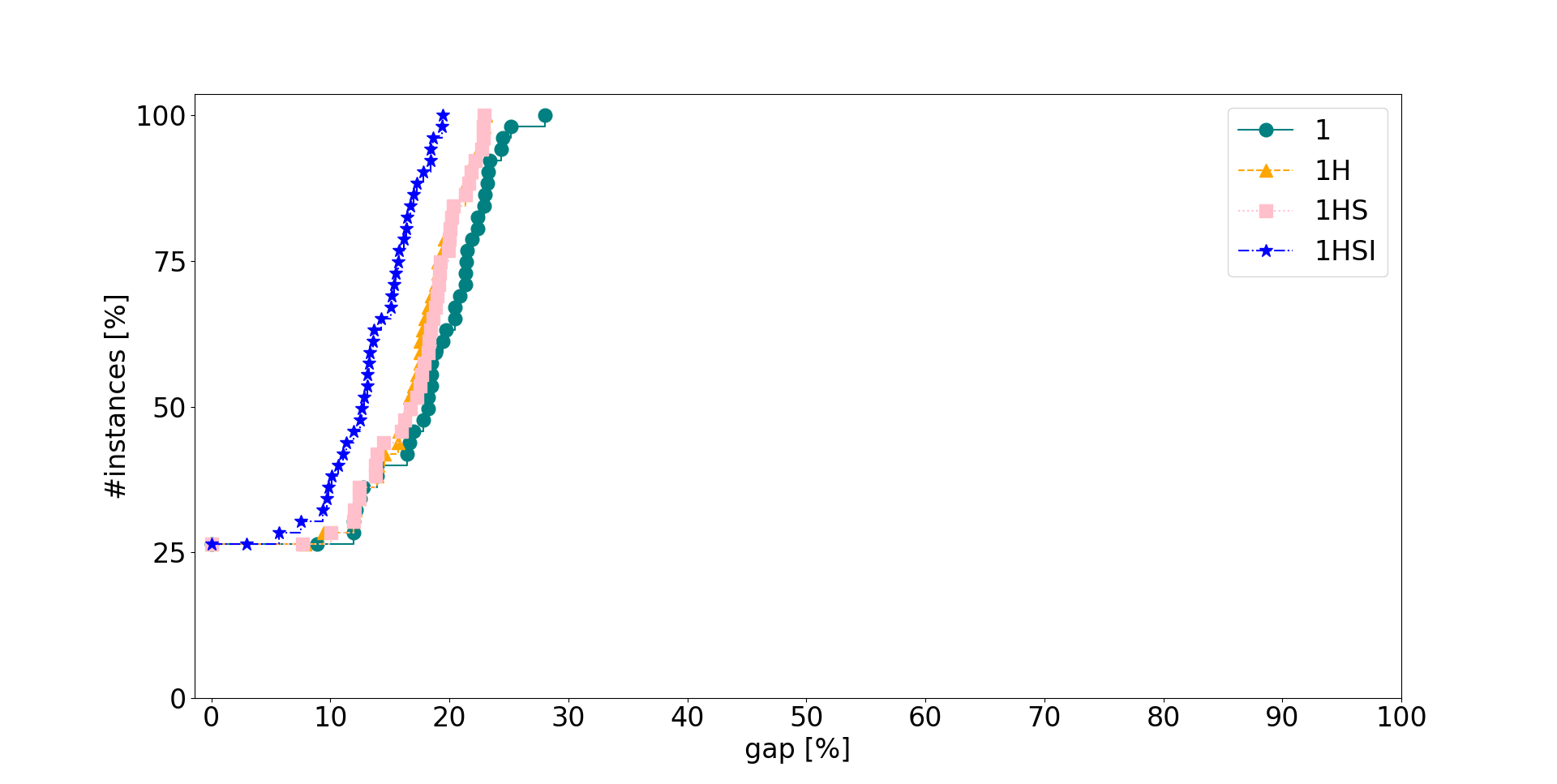}
		\caption{root gap for the \tsplib instances}
		\label{fig:root_tsp}
	\end{subfigure}
	\caption{Comparison of our \BC settings with respect to the root gap for the \pmed and \tsplib instances}
	\label{fig:plot_rootgap}
\end{figure}

These observations are consistent with the results in \citet{gaar2023exact} with the exception of the effect of adding the lifted inequalities. Although this addition has a positive effect also for the~\paCP, the improvement of performance is significantly higher for the~\apCP. 
A reason for this lies in the structure of the lifted inequalities. For the~\apCP, a lifted inequality based on a stronger lower bound supersede a lifted inequality based on a weaker lower bound. 
However, this dominance of single inequalities is not given for the~\paCP. Here, only subsets of lifted inequalities can supersede lifted inequalities based on weaker bounds, which leads to more relevant cuts and a higher complexity. 

\subsection{Detailed results}

We now provide detailed results and compare our 
enhanced branch-and-cut algorithm, i.e., setting \texttt{1HSL}, with the basic 
approach of using CPLEX without any enhancement, i.e., setting~\texttt{1}. 
Moreover, we also include a comparison with the 
heuristic of \citet{ristic2023solving} whenever possible, i.e, for the 
instance set \pmed. 

\begin{table}[h!tb]
	\caption{Results for \texttt{pmed} for $\alpha=2$}	
	\label{tab:runtime_pmed}
	\centering
	\setlength{\tabcolsep}{5.5pt}
	\renewcommand{\arraystretch}{1.1}
	\begin{tabular}{lrr|rrrr|rrrr|rr}\multicolumn{3}{c}{} & \multicolumn{4}{|c}{\texttt{1}} & \multicolumn{4}{|c}{\texttt{1HSL}} & \multicolumn{2}{|c}{VNS} \\\hline
		\multicolumn{1}{l}{instance} & \multicolumn{1}{c}{$n$} & \multicolumn{1}{c}{$p$} & \multicolumn{1}{|c}{$UB$} & \multicolumn{1}{c}{$LB$} & \multicolumn{1}{c}{$t[s]$} & \multicolumn{1}{c}{$\#BB$}& \multicolumn{1}{|c}{$UB$} & \multicolumn{1}{c}{$LB$} & \multicolumn{1}{c}{$t[s]$} & \multicolumn{1}{c}{$\#BB$} & \multicolumn{1}{|c}{$UB$} & \multicolumn{1}{c}{$t[s]$} \\ \hline 
		pmed1 & 100 & 5 & \textbf{268} & \textbf{268.00 }& 141.3 & 1833 & \textbf{268} & \textbf{268.00} & \textbf{56.1} & 947 & 268 & 0.2 \\
		pmed2 & 100 & 10 & \textbf{220} & \textbf{220.00 }& 1023.0 & 7580 &\textbf{ 220} & \textbf{220.00} & \textbf{108.8} & 1974 & 220 & 14.2 \\
		pmed3 & 100 & 10 & \textbf{208} & \textbf{208.00} & 465.9 & 7926 & \textbf{208} & \textbf{208.00} &\textbf{ 170.2} & 4391 & 208 & 1.7 \\
		pmed4 & 100 & 20 & \textbf{163} & \textbf{163.00} & \textbf{515.9} & 11233 & \textbf{163} & \textbf{163.00} & 893.2 & 19036 & 163 & 0.4 \\
		pmed5 & 100 & 33 & \textbf{110} & \textbf{110.00} & \textbf{38.6} & 610 & \textbf{110} & \textbf{110.00} & 106.5 & 6775 & 110 & 0.1 \\
		pmed6 & 200 & 5 & \textbf{180} & \textbf{180.00} & 1259.2 & 1049 & \textbf{180} & \textbf{180.00} & \textbf{799.8} & 5941 & 180 & 0.6 \\
		pmed7 & 200 & 10 & \textbf{145} & \textbf{135.74} & TL & 1373 & \textbf{145} & 129.91 & TL & 9461 & 143 & 11.2 \\
		pmed8 & 200 & 20 & 131 & \textbf{114.6}4 & TL & 1087 & \textbf{128} & 107.00 & TL & 3001 & 122 & 9.3 \\
		pmed9 & 200 & 40 & \textbf{87} & \textbf{80.24} & TL & 2313 & 91 & 74.44 & TL & 5984 & 85 & 0.5 \\
		pmed10 & 200 & 67 & \textbf{70} &\textbf{ 70.00} & 5.6 & 0 & \textbf{70} & \textbf{70.00} & \textbf{0.6 }& 0 & 70 & 0.1 \\
		pmed11 & 300 & 5 & 126 & 120.96 & TL & 487 & \textbf{125} & \textbf{121.00} & TL & 2366 & 125 & 0.8 \\
		pmed12 & 300 & 10 & 122 & \textbf{105.06} & TL & 400 & \textbf{114} & 103.47 & TL & 1924 & 112 & 3.8 \\
		pmed13 & 300 & 30 & \textbf{83} & \textbf{72.03} & TL & 183 & 84 & 68.00 & TL & 1044 & 78 & 3.6 \\
		pmed14 & 300 & 60 & \textbf{60} & \textbf{60.00} & 1416.4 & 1101 & 67 & \textbf{60.00} & TL & 2420 & 60 & 2.6 \\
		pmed15 & 300 & 100 & \textbf{44} & \textbf{44.00} & \textbf{39.1} & 0 & \textbf{44} & \textbf{44.00} & 335.7 & 2857 & 44 & 0.3 \\
		pmed16 & 400 & 5 & 101 & \textbf{95.00} & TL & 0 & \textbf{99} & 93.53 & TL & 1932 & 98 & 1.5 \\
		pmed17 & 400 & 10 & 108 & \textbf{78.00 }& TL & 0 & \textbf{88} & 77.00 & TL & 1029 & 83 & 8.3 \\
		pmed18 & 400 & 40 & 84 &\textbf{ 54.61} & TL & 0 & \textbf{71} & 53.00 & TL & 748 & 62 & 136.4 \\
		pmed19 & 400 & 80 & 123 & \textbf{34.00} & TL & 0 & \textbf{47} & \textbf{34.00 }& TL & 600 & 42 & 25.6 \\
		pmed20 & 400 & 133 & 40 & \textbf{40.00} & 84.1 & 0 & \textbf{40} & \textbf{40.00} & \textbf{9.6} & 0 & 40 & 0.7 \\
		
		pmed21 & 500 & 5 & 126 & 77.00 & TL & 0 & \textbf{88} & \textbf{79.41} & TL & 1458 & 85 & 8.3 \\
		pmed22 & 500 & 10 & 174 & 70.57 & TL & 0 & \textbf{87} &\textbf{ 73.08} & TL & 794 & 80 & 57.0 \\
		pmed23 & 500 & 50 & 118 & 41.00 & TL & 0 & \textbf{56} & \textbf{42.00} & TL & 17 & 49 & 162.2 \\
		pmed24 & 500 & 100 & 101 & \textbf{33.00 }& TL & 0 & \textbf{40} & \textbf{33.00} & TL & 299 & 35 & 11.9 \\
		pmed25 & 500 & 167 & \textbf{44} & \textbf{44.00} & 134.7 & 0 & \textbf{44} & \textbf{44.00} & \textbf{6.1} & 0 & 44 & 1.5 \\
		pmed26 & 600 & 5 & 120 & - & TL & 0 & \textbf{82} & \textbf{73.33} & TL & 1092 & 80 & 100.9 \\
		pmed27 & 600 & 10 & 139 & 60.66 & TL & 0 & \textbf{70} & \textbf{62.00} & TL & 749 & 67 & 66.1 \\
		pmed28 & 600 & 60 & 161 & - & TL & 0 & \textbf{57} & \textbf{57.00} &\textbf{ 13.5} & 0 & 57 & 2.6 \\
		pmed29 & 600 & 120 & \textbf{36} & \textbf{36.00 }& 799.2 & 180 & \textbf{36} & \textbf{36.00} &\textbf{ 29.2} & 0 & 36 & 3.0 \\
		pmed30 & 600 & 200 & \textbf{40} & \textbf{40.00} & 244.0 & 0 & \textbf{40} & \textbf{40.00} & \textbf{8.7 }& 0 & 40 & 2.6 \\
		pmed31 & 700 & 5 & 109 & - & TL & 0 & \textbf{66} & \textbf{59.05 }& TL & 578 & 64 & 6.5 \\
		pmed32 & 700 & 10 & 219 & - & TL & 0 & \textbf{72} & \textbf{72.00} & \textbf{10.5} & 0 & 72 & 4.3 \\
		pmed33 & 700 & 70 & 118 & - & TL & 0 & \textbf{44} & \textbf{29.00} & TL & 1 & 35 & 64.2 \\
		pmed34 & 700 & 140 & 83 & \textbf{41.00} & TL & 0 & \textbf{41} & \textbf{41.00} & \textbf{11.8} & 0 & 41 & 4.4 \\
		pmed35 & 800 & 5 & 89 & - & TL & 0 & \textbf{66} & \textbf{60.00 }& TL & 504 & 64 & 127.7 \\
		pmed36 & 800 & 10 & 161 & - & TL & 0 & \textbf{61} & \textbf{54.00 }& TL & 313 & 58 & 112.0 \\
		pmed37 & 800 & 80 & 119 & - & TL & 0 & \textbf{39} & \textbf{33.00} & TL & 104 & 33 & 208.8 \\
		pmed38 & 900 & 5 & 114 & - & TL & 0 & \textbf{62} & \textbf{54.00} & TL & 343 & 61 & 48.8 \\
		pmed39 & 900 & 10 & 194 & - & TL & 0 & \textbf{74} & \textbf{74.00} & \textbf{21.5} & 0 & 74 & 9.0 \\
		pmed40 & 900 & 90 & 90 & - & TL & 0 & \textbf{40} & \textbf{24.00 }& TL & 0 & 29 & 29.6 \\
		\hline
	\end{tabular}
\end{table}
 
Table~\ref{tab:runtime_pmed} shows the results for the \pmed instances, 
including the runtime in seconds in the columns $t[s]$, with $TL$ 
indicating that the time limit is reached. For the heuristic of 
\citep{ristic2023solving}, we report the average runtime given in 
\citep{ristic2023solving} as runtime, because the heuristic is run multiple 
times in 
their computational study. The objective function values of the best integer 
solutions obtained
are given in the columns $UB$ and the columns $LB$ contain the 
best found lower bounds.
Moreover, 
columns $\#BB$ report the number of nodes in the B\&C tree. 
Bold entries in the table indicate the 
best values for the runtime, the upper bound and the lower bound among the two 
exact approaches. 

We see that for the smaller instances with up to~400 locations the 
performance of~\texttt{1} and~\texttt{1HSL} is comparable. However, starting from 
instance \texttt{pmed19}, the setting~\texttt{1HSL} consistently yields better 
lower bounds and runtimes than setting~\texttt{1}, while providing an at 
least as good upper bound.
An explanation for this could be that for small instances, solving the 
compact MIP formulation directly with CPLEX is more efficient and the 
advantage of fixing variables and separating cuts is less significant than in 
larger instances.   
For the instances with at least~600~locations, CPLEX has trouble solving 
the 
LP-relaxation within the time limit, which is why only few lower bounds are 
reported for these instances. The setting \texttt{1HSL}, however, yields lower 
bounds 
even for the instances with~900~locations since separating the linking 
constraints 
reduces the size of the LP-relaxation extremely. The fact that \texttt{1HSL} also 
finds better upper bounds is due to our starting heuristic which is efficient even 
for large instances.
The heuristic of \citet{ristic2023solving} finds the best solutions among all 
three approaches for all~40~instances, and 
for 17 of them, our exact algorithm proves optimality.  

\begin{table}[h!tb]
	\caption{Results for \texttt{tsplib} for~$\alpha=2$}
	\centering
	\label{tab:runtime_tsp}
	\renewcommand{\arraystretch}{1.2}
	\begin{tabular}{lr|rrrr|rrrr}\multicolumn{2}{c}{} & \multicolumn{4}{|c}{\texttt{1}} & \multicolumn{4}{|c}{\texttt{1HSL}} \\\hline
		\multicolumn{1}{l}{instance} & \multicolumn{1}{c}{$p$} & \multicolumn{1}{|c}{$UB$} & \multicolumn{1}{c}{$LB$} & \multicolumn{1}{c}{$t[s]$} & \multicolumn{1}{c}{$\#BB$}& \multicolumn{1}{|c}{$UB$} & \multicolumn{1}{c}{$LB$} & \multicolumn{1}{c}{$t[s]$} & \multicolumn{1}{c}{$\#BB$} \\ \hline 
		att48 & 10 & \textbf{2827.72} & \textbf{2827.72} & 3.4 & 436 & \textbf{2827.72} & \textbf{2827.72} & \textbf{1.6 }& 191 \\
		att48 & 20 & \textbf{1654.69} & \textbf{1654.69} & 2.6 & 406 & \textbf{1654.69} & \textbf{1654.69} & \textbf{1.3} & 247 \\
		att48 & 30 & \textbf{1203.18} & \textbf{1203.18} & 0.3 & 0 & \textbf{1203.18} & \textbf{1203.18} & \textbf{0.1} & 0 \\
		st70 & 10 & \textbf{48.24} & \textbf{48.24} & 25.7 & 2033 & \textbf{48.24} & \textbf{48.24} & \textbf{22.2} & 1955 \\
		st70 & 20 & \textbf{30.59} & \textbf{30.59} & 25.2 & 4661 & \textbf{30.59} & \textbf{30.59} & \textbf{11.9} & 4711 \\
		st70 & 30 & \textbf{22.88} & \textbf{22.88} & \textbf{3.5} & 1254 & \textbf{22.88} & \textbf{22.88} & 12.3 & 7463 \\
		st70 & 40 & \textbf{19.70} & \textbf{19.70} & 0.5 & 0 & \textbf{19.70} & \textbf{19.70} & \textbf{0.1} & 0 \\
		rd100 & 10 & \textbf{484.87} & \textbf{484.87} & 478.2 & 11046 & \textbf{484.87} & \textbf{484.87} & \textbf{75.3} & 3327 \\
		rd100 & 20 & \textbf{325.05} & \textbf{325.05} & \textbf{101.6} & 8126 & \textbf{325.05} & \textbf{325.05} & 160.9 & 9050 \\
		rd100 & 30 & \textbf{264.83} & \textbf{264.83} & 1733.0 & 95245 & \textbf{264.83} & \textbf{264.83} & \textbf{432.2} & 33791 \\
		rd100 & 40 & \textbf{211.48} & \textbf{211.48} & 75.7 & 9713 & \textbf{211.48} & \textbf{211.48} & \textbf{13.8} & 2109 \\
		rd100 & 50 & \textbf{174.70 }&\textbf{ 174.70 }& 11.7 & 1312 &\textbf{ 174.70} &\textbf{ 174.70} & \textbf{5.9} & 1618 \\
		eil101 & 10 & \textbf{34.09} & \textbf{34.09 }& 315.1 & 8062 & \textbf{34.09} & \textbf{34.09} & \textbf{175.3} & 3111 \\
		eil101 & 20 & 23.09 & 22.55 & TL & 30197 & \textbf{22.66} & \textbf{22.66} &\textbf{ 448.1} & 15950 \\
		eil101 & 30 & \textbf{18.30} & \textbf{18.30} & 1036.7 & 69831 & \textbf{18.30} & \textbf{18.30} &\textbf{ 82.3} & 5521 \\
		eil101 & 40 & \textbf{16.02} &\textbf{ 16.02} & 308.1 & 19788 & \textbf{16.02} & \textbf{16.02} & \textbf{71.0} & 5452 \\
		eil101 & 50 & \textbf{14.47} & \textbf{14.47} & 11.1 & 2730 & \textbf{14.47} & \textbf{14.47} & \textbf{25.4} & 5000 \\
		eil101 & 60 & \textbf{12.73} & \textbf{12.73} & 3.2 & 1130 & \textbf{12.73} & \textbf{12.73} &\textbf{ 0.7} & 28 \\
		bier127 & 10 & \textbf{7717.43} & \textbf{7717.43} & 442.3 & 1469 &\textbf{ 7717.43} & \textbf{7717.43} & \textbf{46.3} & 277 \\
		bier127 & 20 & \textbf{6078.67 }& \textbf{6078.67} & 5.6 & 0 & \textbf{6078.67} &\textbf{ 6078.67} & \textbf{0.2} & 0 \\
		bier127 & 30 &\textbf{ 6078.67} & \textbf{6078.67} & 2.0 & 0 & \textbf{6078.67} & \textbf{6078.67} & \textbf{0.2} & 0 \\
		bier127 & 40 & \textbf{6078.67} & \textbf{6078.67} & 1.2 & 0 & \textbf{6078.67} & \textbf{6078.67} & \textbf{0.2} & 0 \\
		bier127 & 50 &\textbf{ 6078.67} & \textbf{6078.67} & 1.1 & 0 & \textbf{6078.67} & \textbf{6078.67} & \textbf{0.2} & 0 \\
		bier127 & 60 &\textbf{ 6078.67} & \textbf{6078.67} & 1.0 & 0 & \textbf{6078.67} & \textbf{6078.67} & \textbf{0.2} & 0 \\
		bier127 & 70 &\textbf{ 6078.67} & \textbf{6078.67} & 0.9 & 0 & \textbf{6078.67} & \textbf{6078.67} & \textbf{0.3} & 0 \\
		ch150 & 10 & 325.74 & 296.74 & TL & 15122 & \textbf{324.87} & \textbf{301.47} & TL & 17954 \\
		ch150 & 20 & \textbf{222.38} & 189.57 & TL & 26589 & 222.63 & \textbf{197.07} & TL & 28722 \\
		ch150 & 30 & 178.99 & 155.17 & TL & 46156 & \textbf{173.46} & \textbf{162.89} & TL & 29864 \\
		ch150 & 40 & \textbf{148.53} & \textbf{148.53} & 1042.3 & 25985 & \textbf{148.53} & \textbf{148.53} & \textbf{1005.8} & 20655 \\
		ch150 & 50 & \textbf{130.62} & \textbf{130.62} & \textbf{501.7} & 20720 & \textbf{130.62} & 130.62 & 613.2 & 41493 \\
		ch150 & 60 & \textbf{116.02} & \textbf{112.24} & TL & 120834 & 117.17 & 111.68 & TL & 166317 \\
		ch150 & 70 &\textbf{ 106.52} & \textbf{106.52} & \textbf{170.6} & 24266 & \textbf{106.52} & \textbf{106.52} & 301.2 & 46373 \\
		ch150 & 80 & \textbf{95.14} & 95.14 & \textbf{13.3} & 1075 & \textbf{95.14} & \textbf{95.14} & 16.5 & 2629 \\
		\hline
	\end{tabular}
\end{table}

\begin{table}[h!tb]
	\caption{Results for \texttt{tsplib} for $\alpha=2$ continued}	
	\label{tab:runtime_tsp2}
	\centering
	\renewcommand{\arraystretch}{1.35}
	\begin{tabular}{lr|rrrr|rrrr}\multicolumn{2}{c}{} & \multicolumn{4}{|c}{\texttt{1}} & \multicolumn{4}{|c}{\texttt{1HSL}} \\\hline
		\multicolumn{1}{l}{instance} & \multicolumn{1}{c}{$p$} & \multicolumn{1}{|c}{$UB$} & \multicolumn{1}{c}{$LB$} & $t[s]$ & \multicolumn{1}{c}{$\#BB$}& \multicolumn{1}{|c}{$UB$} & \multicolumn{1}{c}{$LB$} & \multicolumn{1}{c}{$t[s]$} & \multicolumn{1}{c}{$\#BB$} \\ \hline 
		rat195 & 10 & 101.27 & 87.29 & TL & 9510 & \textbf{97.48} & \textbf{88.40} & TL & 7205 \\
		rat195 & 20 & 69.06 & 55.16 & TL & 15933 & \textbf{66.63} & \textbf{56.59} & TL & 11200 \\
		rat195 & 30 & 54.14 & 42.91 & TL & 22283 & \textbf{53.88 }& \textbf{43.87} & TL & 17800 \\
		rat195 & 40 & \textbf{45.78} & 36.13 & TL & 27317 & 46.28 & \textbf{37.24} & TL & 31898 \\
		rat195 & 50 & \textbf{40.04} & 32.35 & TL & 52637 & 40.62 & \textbf{33.47} & TL & 38500 \\
		rat195 & 60 & \textbf{36.50} & 28.81 & TL & 36456 & \textbf{36.50} & \textbf{31.08} & TL & 44713 \\
		rat195 & 70 & 33.63 & 27.26 & TL & 37921 & \textbf{32.41} & \textbf{31.04} & TL & 49957 \\
		rat195 & 80 & \textbf{30.42} & 30.27 & TL & 60263 & \textbf{30.42} & \textbf{30.42} & \textbf{1434.5} & 71278 \\
		rat195 & 90 & \textbf{28.61} & 27.86 & TL & 74364 & \textbf{28.61} & \textbf{28.54} & TL & 107564 \\
		rat195 & 100 & \textbf{27.41} & \textbf{27.41} & 267.9 & 15067 & \textbf{27.41} & \textbf{27.41} & \textbf{63.6} & 4834 \\
		pr439 & 10 & 7118.44 & \textbf{4124.62 }& TL & 2 & \textbf{4970.02} & 4122.21 & TL & 1089 \\
		pr439 & 20 & 7016.72 & 2530.26 & TL & 10 & \textbf{3140.93} & \textbf{2542.89} & TL & 556 \\
		pr439 & 30 & 2865.09 & 1837.91 & TL & 450 & \textbf{2292.31} & \textbf{1920.39} & TL & 496 \\
		pr439 & 40 & 11578.53 & 1425.73 & TL & 0 &\textbf{ 1850.44 }& \textbf{1511.29} & TL & 542 \\
		pr439 & 50 & 21664.40 & \textbf{1364.73} & TL & 0 & \textbf{1550.00} & \textbf{1364.73} & TL & 1707 \\
		pr439 & 60 & 21664.40 & \textbf{1364.73} & TL & 0 & \textbf{1364.73} & \textbf{1364.73} & \textbf{43.1} & 0 \\
		pr439 & 70 & 21664.40 & \textbf{1364.73 }& TL & 0 & \textbf{1364.73} & \textbf{1364.73} & \textbf{4.0} & 0 \\
		pr439 & 80 & 6930.02 & \textbf{1364.73} & TL & 0 & \textbf{1364.73} &\textbf{ 1364.73} & \textbf{2.1} & 0 \\
		pr439 & 90 & \textbf{1364.73} & \textbf{1364.73} & 476.0 & 0 & \textbf{1364.73} & \textbf{1364.73} & \textbf{1.7} & 0 \\
		pr439 & 100 & \textbf{1364.73} & 1\textbf{364.73} & 376.1 & 0 & \textbf{1364.73 }& \textbf{1364.73} & \textbf{2.5} & 0 \\
		\hline
	\end{tabular}
\end{table}

Tables~\ref{tab:runtime_tsp} and~\ref{tab:runtime_tsp2} show the results for the 
\tsplib instances and settings \texttt{1} and \texttt{1HSL}, reporting the same 
values for them as in Table~\ref{tab:runtime_pmed}. 
Although nearly all instances with at most $|I|=|J|=127$ locations were 
solved to optimality by both settings, the best runtimes are obtained by 
\texttt{1HSL}.  
For the larger instances, of which only some could be solved to optimality, the 
setting~\texttt{1HSL} provides better lower bounds. This is likely due to the 
addition of our upper bound and remoteness equalities, simple and general upper bound inequalities and extended lifted inequalities, as they yield 
significantly improved root bounds, 
which are then further improved in the \BC tree.
Moreover, the importance of our 
heuristics is reflected by the fact that for the largest instance, namely 
\texttt{pr439}, the obtained upper bounds for \texttt{1HSL} are much better 
than those for~\texttt{1}.

\section{Conclusions}\label{sec:conclusion}

In this work, we introduce the~$p$-$\alpha$-closest-center problem (\paCP), which 
is a generalization of the classical (discrete) $p$-center problem 
and the $p$-second-center problem. To better set the \paCP in context, 
we first prove relationships between the optimal objective function values for 
different versions of the $p$-center problem including the 
$\alpha$-neighbor-$p$-center problem and the $p$-next-center problem.

As our main contribution, we introduce 
the very first mixed-integer programming (MIP) formulations of the \paCP, which 
are 
also the very first for the recently introduced $p$-second-center problem, and  
present several valid inequalities and so-called optimality-preserving 
inequalities, i.e., inequalities that may cut off feasible solutions, but that do 
not change the optimal objective function value. 
We conduct a polyhedral study on 
our four MIP formulations and study their semi-relaxations, in which only 
some of the 
binary variables are relaxed.
For three of our MIP formulations, we strengthen their linear programming relaxations by 
introducing lifted inequalities, i.e., inequalities that incorporate a lower 
bound. 
We characterize the best lower bounds that are obtainable by iterative procedures 
based on these lifted inequalities similar to the work of 
\citet{gaar2022scaleable} and \citet{gaar2023exact} for other $p$-center problem 
variants.

We develop a branch-and-cut algorithm with various ingredients such as starting 
and primal heuristics, variable fixing, and separation of (in)equalities 
which are based on our theoretical results.
We test the effects of these ingredients in a computational 
study.
Out of 93 considered instances from the literature on $p$-center problem 
variants, we solve 52 to optimality. 
We also compare our solution algorithm to the heuristic of 
\citet{ristic2023solving} on their instance set and prove optimality of their 
solutions for~17 instances.

There are various avenues for further work. For example, one could try to extend 
other formulations of the $p$-center problem to the \paCP, such as the 
formulations presented in 
\citet{elloumi2004,calik2013double,ales2018,gaar2022scaleable}. Moreover, it could 
also be interesting to try to extend set cover-based approaches (see, e.g, 
\citet{chen2009,contardo2019scalable}) from the $p$-center problem to the \paCP. 
The development of further heuristics could also be a fruitful research direction, 
in particular to be able to tackle larger-scale instances of the \paCP.

\FloatBarrier

\ifArXiV
\bibliographystyle{plainnat}
\else
\bibliographystyle{elsarticle-harv}
\fi

\bibliography{biblio}

\end{document}